\newcommand{\R}{{\Bbb R}}
\newcommand{\C}{{\Bbb C}}
\newcommand{\re}{\text{\upshape Re\,}}
\newcommand{\im}{\text{\upshape Im\,}}
\newcommand{\Ai}{\text{\upshape Ai}}
\newcommand{\IV}{\text{\upshape IV}}
\def\XXint#1#2#3{{\setbox0=\hbox{$#1{#2#3}{\int}$}
\vcenter{\hbox{$#2#3$}}\kern-.5\wd0}}
\newtheorem{theorem}{Theorem}
\newtheorem{proposition}{Proposition}[section]
\newtheorem{lemma}[proposition]{Lemma}
\newtheorem{remark}[proposition]{Remark}
\newtheorem{figuretext}{Figure}
\numberwithin{equation}{section}
\date{\today}
\title[Airy and Painlev\'e asymptotics  for the mKdV equation]
{Airy and Painlev\'e asymptotics \\ for the mKdV equation}
\author{C. Charlier and J. Lenells}
\address{Department of Mathematics, KTH Royal Institute of Technology, \\ 100 44 Stockholm, Sweden.}
\email{cchar@kth.se, jlenells@kth.se}
\begin{document}

\begin{abstract} 
\noindent
We consider the higher order asymptotics for the mKdV equation in the Painlev\'e sector. We first show that the solution admits a uniform expansion to all orders in powers of $t^{-1/3}$ with coefficients that are smooth functions of $x(3t)^{-1/3}$. We then consider the special case when the reflection coefficient vanishes at the origin. In this case, the leading coefficient which satisfies the Painlev\'e II equation vanishes. We show that the leading asymptotics is instead described by the derivative of the Airy function. We are also able to express the subleading term explicitly in terms of the Airy function. 
\end{abstract}

\maketitle

\noindent
{\small{\sc AMS Subject Classification (2010)}: 37K15, 41A60, 35Q15, 35Q53.}

\noindent
{\small{\sc Keywords}: Long-time asymptotics, modified KdV equation, Painlev\'e transcendent, Riemann--Hilbert problem, nonlinear steepest descent.}

\setcounter{tocdepth}{1}
\tableofcontents

\section{Introduction}
The initial value problem for a nonlinear integrable evolution equation can be analyzed via the inverse scattering transform, which expresses the solution of the equation in terms of the solution of a matrix Riemann--Hilbert (RH) problem. One of the greatest advantages of this approach is that it can be used to derive detailed asymptotic formulas for the long-time behavior of the solution. In the pioneering work \cite{DZ1993}, the asymptotic behavior of the solution $u(x,t)$ of the modified Korteweg-de Vries (mKdV) equation 
\begin{align}\label{mkdv}
  u_t - 6u^2u_x + u_{xxx} = 0
\end{align}
was rigorously established via the application of a nonlinear version of the steepest descent method. Six asymptotic sectors, denoted by I-VI, were identified (see Figure \ref{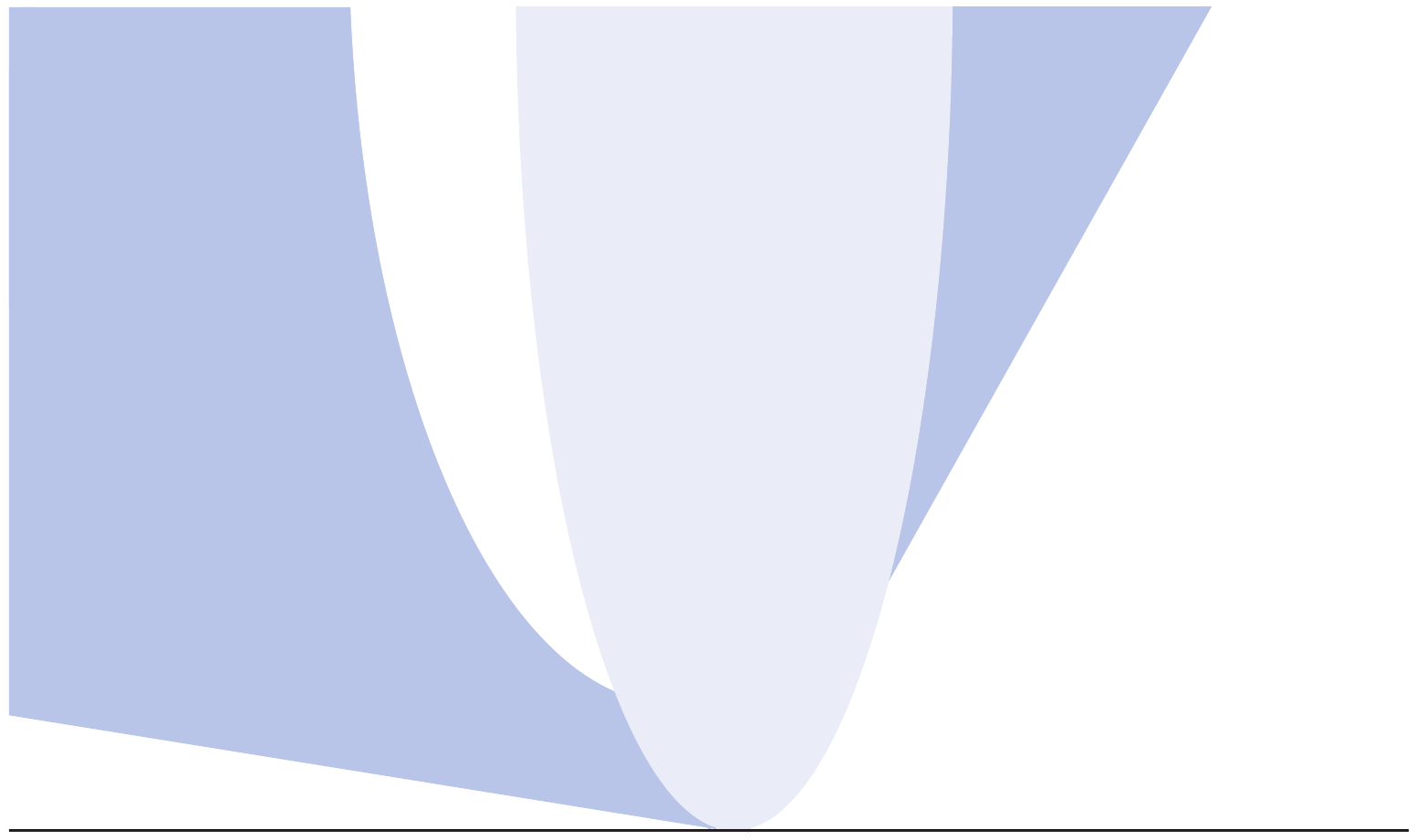}), and in each sector the leading asymptotic term was determined explicitly in terms of the reflection coefficient $r(k)$. Since $r(k)$ is defined in terms of the initial data $u_0(x) = u(x,0)$ alone, this provides an effective solution of the problem. 

In Sector IV, defined by $|x| \leq M t^{1/3}$ for some constant $M > 0$, it was shown in \cite{DZ1993} that the solution $u(x,t)$ obeys the asymptotics
\begin{align}\label{uleadingsectorIV}
u(x,t) = \frac{u_1(y)}{t^{1/3}} + O(t^{-2/3}), \qquad t \to \infty,
\end{align}
where the leading coefficient $u_1(y)$ is given by
\begin{align}\label{u1expression}
u_1(y) = 3^{-1/3}u_P(y;s,0,-s), \qquad y := x(3t)^{-1/3},
\end{align}
with $u_P(y; s,0,-s)$ being a solution of the Painlev\'e II equation 
\begin{align}\label{painleve2}
  u_{P}''(y) = yu_P(y) + 2u_P(y)^3
\end{align}  
determined by the value of $s := ir(0)$. In Sector II, defined by $-M t\leq x < 0$ and $|x|t^{-1/3}\to \infty$, the solution instead asymptotes, to leading order, to a slowly decaying modulated sine wave.
Later, in \cite{DZ1994}, the same authors derived an asymptotic expansion valid to all orders as $(x,t) \to \infty$ in the subsector of Sector II given by $-M_1t \leq x \leq -M_2 t$ and recursive formulas were derived for the higher order coefficients. 
Despite this progress, it appears that the higher order asymptotics in other asymptotic regions remains to be studied in detail.

In this paper, we consider the higher order asymptotics of (\ref{mkdv}) in Sector IV. If $r(0) \neq 0$, then equation (\ref{uleadingsectorIV}) provides the leading asymptotics in this sector. However, if $r(0) = 0$, then $s = 0$ and the associated solution $u_P(y; s,0,-s)$ of Painlev\'e II vanishes identically. Thus $u_1(y) \equiv 0$ and (\ref{uleadingsectorIV}) does not provide any information on the leading term in this case.

Our first result (Theorem \ref{mainth1}) states that $u(x,t)$ admits a uniform expansion to all orders in Sector IV of the form
$$u(x,t) \sim \sum_{j=1}^\infty \frac{u_j(y)}{t^{j/3}}  \quad \text{as $t \to \infty$},$$
where $\{u_j(y)\}_1^\infty$ are smooth functions of $y \in \R$ with the leading coefficient $u_1(y)$ given by (\ref{u1expression}). In general, the higher order coefficients $\{u_j(y)\}_2^\infty$ cannot be computed explicitly, but they are solutions of a hierarchy of ordinary differential equations of which Painlev\'e II is the first member. 

Our second result (Theorem \ref{mainth2}) deals with the special case when $r(0) = 0$. Interestingly, in this case it is possible not only to compute the leading coefficient $u_2(y)$ explicitly, but also the subleading term $u_3(y)$. The coefficients are expressed in terms of the Airy function $\Ai(y)$. More precisely, if $r(0) = 0$, we show that the solution $u(x,t)$ obeys the asymptotic expansion
$$u(x,t) = \frac{u_2(y)}{t^{2/3}} +  \frac{u_3(y)}{t} + \cdots \quad \text{as $t \to \infty$}$$
uniformly for $|x| \leq M t^{1/3}$, where
\begin{align}\label{u2u3explicit}
u_2(y) = \frac{r'(0)}{2\times 3^{2/3}}\Ai'(y),
\qquad u_3(y) = -\frac{ir''(0)}{24}y\Ai(y).
\end{align}

Although we only present our results for the mKdV equation, it is clear that similar considerations apply also to other integrable equations with Painlev\'e type asymptotic sectors. 

Let us finally mention some other works that also use nonlinear steepest descent techniques for Riemann--Hilbert problems to derive asymptotic formulas in related situations. Asymptotic results for the KdV equation can be found in \cite{AELT2016, DVZ1994, EGKT2013, GT2009}; asymptotic results for the mKdV equation on the half-line are presented in \cite{BFS2004, L2016}; and higher order asymptotics in sectors analogous to Sector II but for other equations have been derived in \cite{HXF2015, V2000}.

The two main theorems are stated in Section \ref{mainsec}. Following a brief review of the RH formalism relevant for (\ref{mkdv}) in Section \ref{overviewsec}, the proofs are presented in Sections \ref{sectorIVgeqsec} and \ref{sectorIVleqsec}. A substantial part of the work---namely the constructions of appropriate local models---is deferred to the two appendices.

\begin{figure}
\begin{center}
\begin{overpic}[width=.6\textwidth]{mkdvsectors.pdf}
      \put(102,0){\small $x$}
      \put(5,3){\small $I$}
      \put(10,30){\small $II$}
      \put(28,47){\small $III$}
      \put(49,47){\small $IV$}
      \put(72,47){\small $V$}
      \put(85,20){\small $VI$}
      \end{overpic}
     \begin{figuretext}\label{mkdvsectors.pdf}
        The six asymptotic sectors in the $(x,t)$-plane for the mKdV equation (\ref{mkdv}). 
     \end{figuretext}
     \end{center}
\end{figure}

\section{Main results}\label{mainsec}
Given initial data $u_0(x)$, $x \in \R$, in the Schwartz class $\mathcal{S}(\R)$, we let $r(k)$, $k \in \R$, be the reflection coefficient associated to $u_0$ according to the inverse scattering transform.\footnote{We use the same conventions for the definition of $r(k)$ as in \cite{DZ1993}.}
The following two theorems are the main results of the paper.

\begin{theorem}\label{mainth1}
Let $u_0 \in \mathcal{S}(\R)$ be a function in the Schwartz class and let $r \in \mathcal{S}(\R)$ be the associated reflection coefficient.
Then the solution $u(x,t)$ of (\ref{mkdv}) with initial data $u_0(x)$ satisfies the following asymptotic formula as $t \to \infty$:
\begin{align}\label{asymptoticsinIV}
u(x,t) = \sum_{j=1}^N \frac{u_j(y)}{t^{j/3}} + O\bigg(\frac{1}{t^{\frac{N+1}{3}}}\bigg),  \qquad |x| \leq M t^{1/3}, 
\end{align}
where 
\begin{itemize}
\item The formula holds uniformly with respect to $x$ in the given range for any fixed $M > 0$ and $N \geq 1$.
\item The variable $y$ is defined by
\begin{align*}
 y = \frac{x}{(3t)^{1/3}}.
\end{align*}

\item $\{u_j(y)\}_1^\infty$ are smooth functions of $y \in \R$.

\item The function $u_1(y)$ is given by (\ref{u1expression}) where $s =  ir(0)$ and $u_P(y; s, 0, -s)$ denotes the smooth real-valued solution of the Painlev\'e II equation (\ref{painleve2}) corresponding to $(s,0,-s)$ via (\ref{painlevebijection}).
 
\end{itemize}
\end{theorem}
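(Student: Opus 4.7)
The plan is to apply the Deift–Zhou nonlinear steepest descent method to the RH problem associated with mKdV via the inverse scattering transform, and to push the analysis to all orders in $t^{-1/3}$. Fix $M>0$ and restrict to $|x|\le Mt^{1/3}$. Under the rescaling $\zeta=(3t)^{1/3}k$, the phase $i(4tk^3+xk)$ becomes $i(\tfrac{4}{3}\zeta^3+y\zeta)$, and the two stationary phase points $k=\pm\sqrt{-x/(12t)}$ coalesce at the origin on the scale $|k|\lesssim t^{-1/3}$. This is the geometric reason that the Painlevé II (Flaschka–Newell) model is the correct local parametrix throughout Sector IV.

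First I would carry out the standard steepest descent steps: perform the two triangular factorizations of the jump matrix on $\R$ used in \cite{DZ1993}, open lenses whose geometry is based at the origin (a fixed disk $|\zeta|\le\rho$ in the rescaled variable), and conjugate by the scalar $\delta$-function associated with the reflection coefficient to eliminate the denominator $1-|r|^2$. In the $\zeta$ variable the transformed RH problem converges to the Painlevé II model RH problem with Stokes data $(s,0,-s)$, $s=ir(0)$, and the reconstruction formula $u(x,t)=2i\lim_{k\to\infty}k\,m_{12}(x,t,k)$ yields \eqref{u1expression} at leading order.

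To obtain \eqref{asymptoticsinIV} to all orders, the main step is to construct a local parametrix $P$ on $|\zeta|\le\rho$ from the exact solution of the Painlevé II model RH problem, take $I$ as the outer parametrix, and consider the error $E=mP^{-1}$ inside the disk and $E=m$ outside. Then $E$ solves a small-norm RH problem on a contour $\Sigma_E$ consisting of the circle $|\zeta|=\rho$ together with the portions of the deformed steepest descent contours lying outside the disk. Two Taylor expansions feed the jump for $E$: that of $r(k)$ about $k=0$, i.e.\ in powers of $\zeta/(3t)^{1/3}$, and that of the $\delta$-function in the matching annulus. Combining them, the jump satisfies
\[
J_E(k) = I + \sum_{j=1}^{N} t^{-j/3}\,J_E^{(j)}(k,y) + O\bigl(t^{-(N+1)/3}\bigr),
\]
with coefficients $J_E^{(j)}$ that are smooth in $y$ and uniformly bounded for $y\in[-M',M']$, $M'=M\cdot 3^{-1/3}$. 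Inverting $I-C_{w_E}$ by Neumann series (justified by $\|w_E\|=O(t^{-1/3})$) and applying the standard asymptotic expansion for a small-norm RH problem produces
\[
E(k)=I+\sum_{j=1}^{N} t^{-j/3}\,E^{(j)}(k,y)+O\bigl(t^{-(N+1)/3}\bigr).
\]
Substituting back into the reconstruction formula yields \eqref{asymptoticsinIV}. Smoothness of each $u_j(y)$ follows from the real-analytic $y$-dependence of the Painlevé II parametrix together with the smooth $y$-dependence of every coefficient $J_E^{(j)}$.

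The main technical obstacle, to be relegated to an appendix, is the construction of the Painlevé II local parametrix and the verification that its $\zeta\to\infty$ matching to the outer identity produces a genuine power series in $t^{-1/3}$ with \emph{uniform} control in $y\in[-M',M']$. Two points are delicate: the stationary points collide at $\zeta=0$, so the parametrix must remain valid across the transitional regime $y\approx 0$; and for $x\ge 0$ the stationary points become purely imaginary, so one must check that the lens geometry and contour deformations remain admissible as $y$ changes sign. Once these matching estimates are established, the small-norm theory delivers the claimed all-order expansion by standard arguments.
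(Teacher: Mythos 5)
Your overall architecture is the one the paper uses: rescale $z=(3t)^{1/3}k$ so the two stationary points coalesce at the origin, build a Painlev\'e~II--type local parametrix on a fixed disk in the rescaled variable, and feed the Taylor expansion of $r$ at $k=0$ (powers of $z\,t^{-1/3}$) into a Neumann-series/small-norm expansion to generate the series in $t^{-1/3}$. The organizational differences are real but benign. First, you take the bare Painlev\'e~II parametrix with constant Stokes data $(s,0,-s)$ and push \emph{all} Taylor corrections of $r$ into the error problem $E$; the paper instead builds a $t$-dependent model problem ($m^Y$ for $x\ge 0$, $m^Z$ for $x\le 0$) whose jump contains the full degree-$N$ Taylor polynomial $p(t,z)$ of $r$, and then expands that model around $m^P$ by a separate Neumann series (Lemmas \ref{YlemmaIV} and \ref{ZlemmaIVg}). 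The computations are equivalent, but note that with your choice the jump contour for $E$ is \emph{not} just the circle plus the exterior lenses --- $E$ retains nontrivial jumps on the lens arcs inside the disk, since $P$ only matches the constant term of $r_a$ there; your later discussion shows you are aware of this, but the sentence defining $\Sigma_E$ should be corrected. Second, you introduce the $\delta$-function conjugation to remove $1-|r|^2$ on $(-k_0,k_0)$. The paper avoids this entirely: for $x\le 0$ it leaves the jump on $[-k_0,k_0]$ unfactorized and absorbs that segment (of length $O(t^{-1/3})$) into the local model as the piece $Z_5$. Your route is admissible, but then $\delta$ depends on $k_0=O(t^{-1/3})$ and must itself be expanded to order $N$ in $t^{-1/3}$ with uniform control as $k_0\to 0$, including near its logarithmic singularities at $\pm k_0$; this is extra work that buys you nothing in Sector IV.

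The one concrete gap is the lens opening itself. The reflection coefficient of Schwartz-class data is not analytic, so you cannot simply deform the jump off $\R$; you need a decomposition $r=r_a+r_r$ with $r_a$ analytic in the lens regions, satisfying $|r_a(t,k)-\sum_{j=0}^N \tfrac{r^{(j)}(0)}{j!}k^j|\le C|k|^{N+1}e^{\frac{t}{4}|\re\Phi|}$, and with $\|r_r\|_{L^1\cap L^\infty(\R)}=O(t^{-N})$. For a fixed leading-order statement an $O(t^{-1})$ remainder suffices and is folklore, but for the all-orders claim the remainder must decay faster than every power of $t^{-1/3}$ that you wish to resolve, and the approximant must reproduce the Taylor polynomial of $r$ at the origin to order $N$ so that the coefficients $J_E^{(j)}$ you write down are actually correct. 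This requires the rational-plus-Fourier-splitting construction of Lemmas \ref{decompositionlemmageq} and \ref{decompositionlemmaleq} (or a $\bar\partial$ substitute), carried out to order $N$ and, for $x\le 0$, centered at the moving point $k_0$ with uniformity in $\zeta\in[-A,0]$. Your proposal does not mention this step, and without it the expansion beyond the first one or two orders is not justified.
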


\begin{theorem}\label{mainth2}
If the reflection coefficient $r(k)$ satisfies $r(0) = 0$, then the asymptotic formula (\ref{asymptoticsinIV}) holds with $u_1(y) \equiv 0$ and with $u_2(y)$ and $u_3(y)$ given explicitly by (\ref{u2u3explicit}).
\end{theorem}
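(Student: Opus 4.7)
The plan is to extract $u_2$ and $u_3$ by refining the Riemann--Hilbert analysis used to prove Theorem \ref{mainth1} in the special case $r(0) = 0$. Since Theorem \ref{mainth1} already guarantees that the expansion \eqref{asymptoticsinIV} holds to all orders with smooth coefficients $u_j(y)$, and that $u_1 \equiv 0$ when $r(0) = 0$ (because then $s = 0$ forces $u_P(y;0,0,0) \equiv 0$), it suffices to identify $u_2(y)$ and $u_3(y)$.

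In the proof of Theorem \ref{mainth1}, nonlinear steepest descent produces a local parametrix near the coalesced stationary point $k = 0$, constructed from the Painlev\'e II model RH problem whose jumps depend on $r$ through the rescaled variable $z = (3t)^{1/3}k$. When $r(0) = 0$ the Painlev\'e parameter $s = ir(0)$ vanishes, the model jumps become small perturbations of the identity, and the associated Painlev\'e transcendent is identically zero. To access the subleading corrections I would Taylor-expand the reflection coefficient inside the local problem,
\begin{align*}
r\bigl(z(3t)^{-1/3}\bigr) = r'(0)\, z\, (3t)^{-1/3} + \tfrac{1}{2}r''(0)\, z^{2}\, (3t)^{-2/3} + O(t^{-1}),
\end{align*}
and solve the resulting model problem perturbatively. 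Because the jumps are small, the first-order correction is governed by the linearization of Painlev\'e II around the zero solution, that is, by the Airy equation $v''(y) = yv(y)$; the decay condition as $y \to +\infty$ inherited from the original RH problem selects $\Ai(y)$ as the relevant branch.

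Next, the small-norm theory for the error matrix that compares the true solution with the global parametrix recovers $u(x,t)$ from the large-$k$ asymptotics via the standard reconstruction formula. Each Taylor coefficient of $r$ contributes a separate term of a definite order in $t^{-1/3}$. The $r'(0)\,z$ term, which appears at order $t^{-1/3}$ inside the parametrix, produces---through the contour integral on the boundary of the local disk---a term of order $t^{-2/3}$ in $u$ whose $y$-dependence is $\Ai'(y)$, the derivative arising from the extra factor of $z$ in the jump; tracking the normalization associated with $k = z(3t)^{-1/3}$ yields the prefactor $r'(0)/(2\cdot 3^{2/3})$. Similarly, the $r''(0)\,z^{2}$ term produces a term of order $t^{-1}$ whose $y$-dependence can be simplified, using $\Ai''(y) = y\Ai(y)$, into the form $-\tfrac{i r''(0)}{24}\, y\Ai(y)$.

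The main obstacle will be the careful bookkeeping of constants in this Airy-type local parametrix: one has to keep track of the scaling conventions in the Painlev\'e II model problem, the Taylor expansion of $r$, and the reconstruction formula for $u$, while verifying that the remaining contributions (from the outer parametrix and from the small-norm error iterates beyond the first) are of order $o(t^{-1})$ and therefore do not contaminate $u_2$ or $u_3$. Once the perturbative Airy parametrix is constructed and the small-norm estimates are adapted accordingly, the explicit formulas \eqref{u2u3explicit} follow from its large-$z$ expansion.
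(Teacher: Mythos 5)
Your proposal follows essentially the same route as the paper: the local model's jump is built from the Taylor polynomial of $r$ at $k=0$ in the rescaled variable $z=(3t)^{1/3}k$, and since $s=ir(0)=0$ the Painlev\'e~II parametrix degenerates to the identity, so the model problem is solved by a Neumann series whose coefficients are Airy-type contour integrals $\int z^j e^{2i(yz+\frac{4}{3}z^3)}dz \propto \Ai^{(j)}(y)$, after which $u_2,u_3$ are read off from the $\partial D_\epsilon(0)$ contribution to the reconstruction formula. One bookkeeping point your "each Taylor coefficient contributes a separate order" remark glosses over: the second iterate of the $r'(0)$ jump term also enters at order $t^{-1}$ (producing a term proportional to $r'(0)^2\int_y^\infty(\Ai'(y'))^2dy'\,\sigma_3$), but it is diagonal and therefore does not affect the $(2,1)$ entry from which $u_3$ is extracted.
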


\begin{remark}[Hierarchy of differential equations]\upshape
Substituting the asymptotic expansion (\ref{asymptoticsinIV}) into (\ref{mkdv}) and identifying coefficients of powers of $t^{-1/3}$, we see that $\{u_j(y)\}_1^\infty$ are smooth real-valued solutions of a hierarchy of linear ordinary differential equations. The first four members of this hierarchy are given by
\begin{align}\nonumber
& u_1'''- y u_1' - u_1 = 2 \times 3^{2/3} (u_1^3)',
	\\\nonumber
& u_2''' - y u_2' - 2u_2 = 6 \times 3^{2/3}(u_1^2 u_2)',
	\\\nonumber
& u_3''' - y u_3' - 3u_3 = 6 \times 3^{2/3}(u_1 u_2^2 + u_1^2 u_3)',
	\\\label{ujhierarchy}
& u_4''' - y u_4' -4u_4 = 2 \times 3^{2/3}(u_2^3 + 6u_1 u_2 u_3 + 3u_1^2 u_4)'.
\end{align}	
As expected, the first of these equations reduces to the Painlev\'e II equation (\ref{painleve2}) when $u_1(y)$ and $u^P(y)$ are related by (\ref{u1expression}). 
Moreover, in the special case when $r(0) = 0$, we have $s = ir(0) = 0$, so $u_1(y) \equiv 0$ and the first equations in the hierarchy are 
\begin{align}\nonumber
& u_2''' - y u_2' - 2u_2 = 0,
	\\\nonumber
& u_3''' - y u_3' - 3u_3 = 0,
	\\\nonumber
& u_4''' - y u_4' -4u_4 = 2 \times 3^{2/3}(u_2^3)'.
\end{align}	
The expressions (\ref{u2u3explicit}) for $u_2(y)$ and $u_3(y)$ are consistent with these equations, because, for each integer $j \geq 1$, 
\begin{align*}
& f(y) = \frac{d^{j-1}}{dy^{j-1}}\Ai(y), 
%\qquad
%f_2(y) = \frac{d^{j-1}}{dy^{j-1}}\Bi(y),
\end{align*}
satisfies the homogeneous linear ODE $f''' - y f' - j f = 0$ and $\Ai''(y) = y\Ai(y)$.
\end{remark}

\section{Riemann--Hilbert problem}\label{overviewsec}
According to the inverse scattering transform formalism, the solution $u(x,t)$ of the Cauchy problem for (\ref{mkdv}) on $\R$ with initial data $u_0(x)$ in the Schwartz class $\mathcal{S}(\R)$ is given by 
\begin{align}\label{recoveru}
u(x,t) %= 2\lim_{k \to \infty} (km(x,t,k))_{12}
= 2\lim_{k \to \infty} k(m(x,t,k))_{21},
\end{align}
where $m(x,t,k)$ denotes the unique solution of the RH problem
\begin{align}\label{preRHm}
\begin{cases}
\text{$m(x, t, k)$ is analytic for $k \in \C \setminus \R$,} \\
\text{$m_+(x,t,k) = m_-(x, t, k) v(x, t, k)$ for $k \in \R$,} \\
\text{$m(x,t,k) = I + O(k^{-1})$ as $k \to \infty$,} 
\end{cases}
\end{align}
with jump matrix $v(x,t,k)$ given by (cf. \cite{DZ1993})
\begin{align*}
& v(x,t,k) = \begin{pmatrix} 1 - |r(k)|^2 & -\overline{r(k)}e^{-t\Phi(\zeta,k)} \\ r(k)e^{t \Phi(\zeta,k)} & 1 \end{pmatrix}, 
	\\
& \Phi(\zeta, k) := 2i(\zeta k + 4k^3), \qquad \zeta := \frac{x}{t},
\end{align*}
and $m_+$ and $m_-$ denote the boundary values of $m$ from the upper and lower half-planes, respectively.
If $u_0 \in \mathcal{S}(\R)$, then the reflection coefficient $r(k)$ belongs to the Schwartz class and obeys the bound
$$\sup_{k \in \R} |r(k)| < 1$$
and the symmetry
\begin{align}\label{rsymm}
r(k) = -\overline{r(-k)}, \qquad k \in \R.
\end{align}
It follows that $v$ obeys the symmetries
$$v(x,t,k) = \sigma_1\overline{v(x,t,\bar{k})}^{-1}\sigma_1 = \sigma_1 \sigma_3 v(x,t,-k)^{-1}\sigma_3\sigma_1,\qquad k \in \R,$$
where 
$$\sigma_1 = \begin{pmatrix} 0 & 1 \\1 & 0 \end{pmatrix}, \quad \sigma_2 = \begin{pmatrix} 0 & -i \\ i & 0 \end{pmatrix}, \quad \sigma_3 = \begin{pmatrix} 1 & 0 \\0 & -1 \end{pmatrix}.$$
Hence, by uniqueness of the solution of the RH problem (\ref{preRHm}), $m$ satisfies
\begin{align}\label{msymm}
m(x,t,k) = \sigma_1\overline{m(x,t,\bar{k})}\sigma_1 = \sigma_1 \sigma_3 m(x,t,-k)\sigma_3\sigma_1, \qquad k \in \C \setminus \R.
\end{align}

\begin{figure}
\begin{center}
\begin{overpic}[width=.4\textwidth]{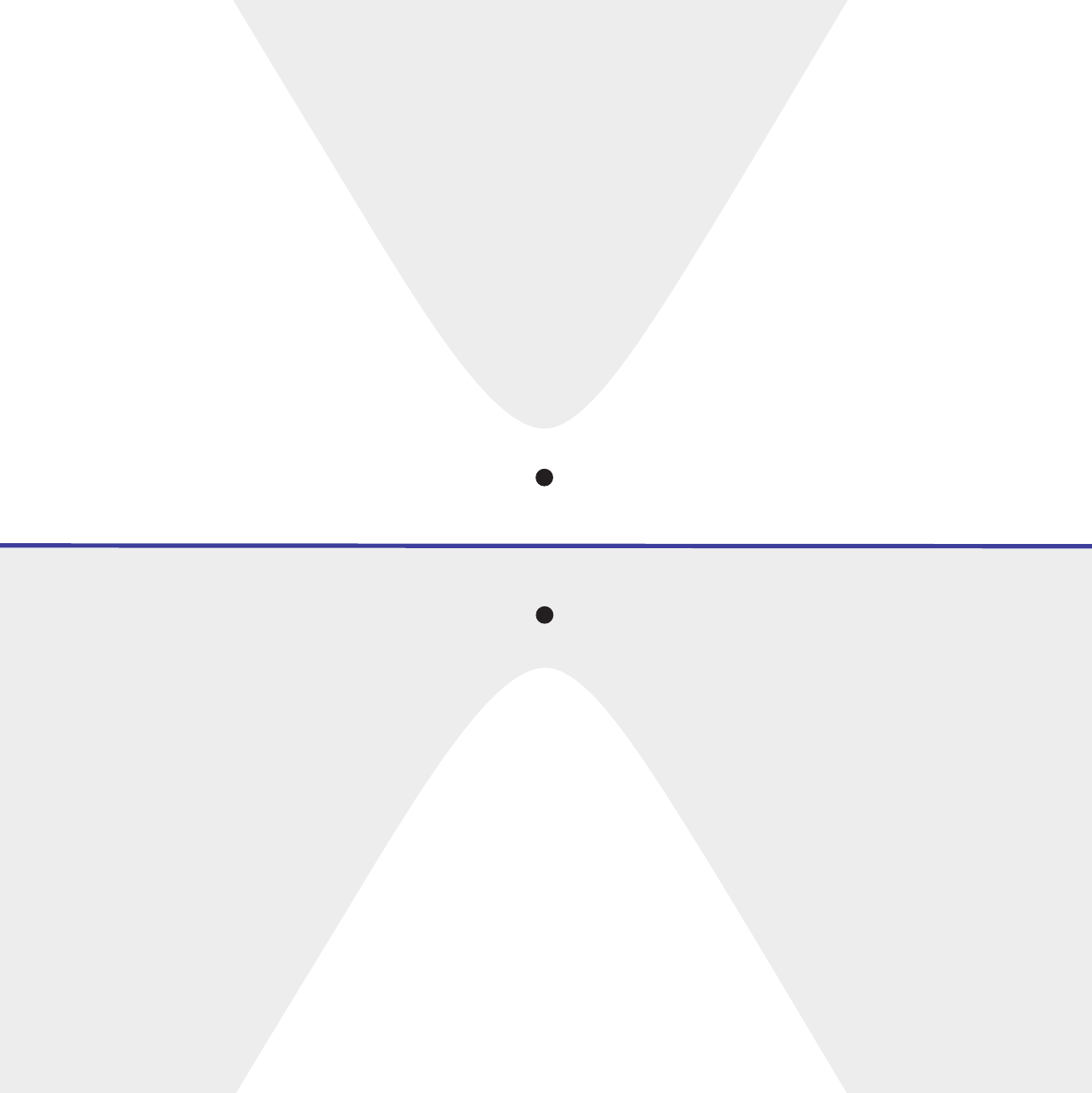}
      \put(102,48){\small $\R$}
      \put(42.2,55){\small $k_0$}
      \put(37.5,42){\small $-k_0$}
      \put(38,85){\small $\re \Phi > 0$}
      \put(66,61){\small $\re \Phi < 0$}
      \put(66,34){\small $\re \Phi > 0$}
      \put(38,10){\small $\re \Phi < 0$}
    \end{overpic}\qquad\qquad
    \begin{overpic}[width=.4\textwidth]{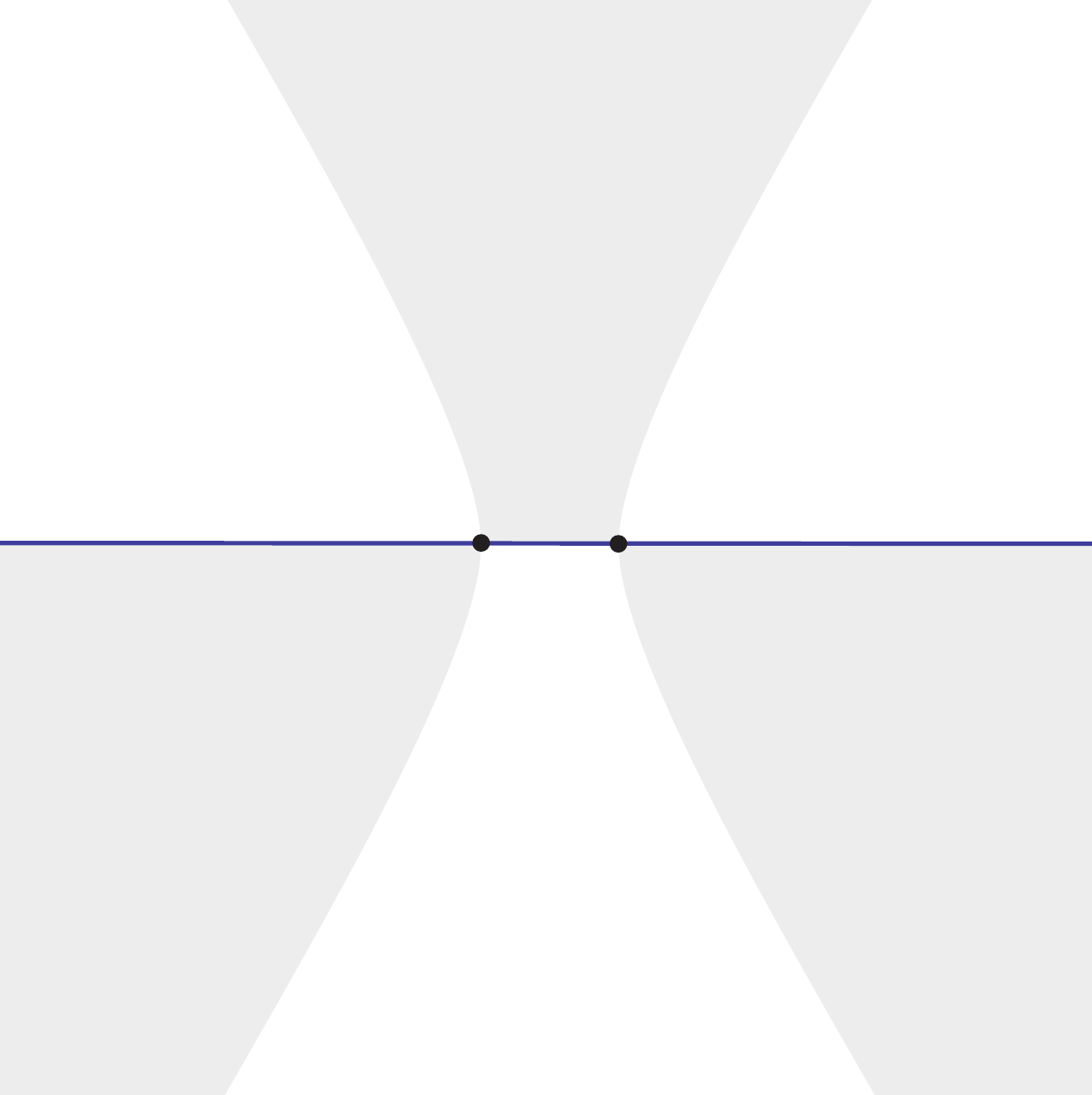}
      \put(102,48){\small $\R$}
      \put(55,44){\small $k_0$}
      \put(37,44){\small $-k_0$}
      \put(38,82){\small $\re \Phi > 0$}
      \put(70,61){\small $\re \Phi < 0$}
      \put(70,34){\small $\re \Phi > 0$}
      \put(38,13){\small $\re \Phi < 0$}    \end{overpic}
    \vspace{.5cm}
     \begin{figuretext}\label{criticalpointsfig}
        The critical points $\pm k_0$ in the complex $k$-plane in the case of $\zeta > 0$ (left) and $\zeta < 0$ (right). The regions where $\re \Phi > 0$ and $\re \Phi < 0$ are shaded and white, respectively. 
     \end{figuretext}
     \end{center}
\end{figure}

The proofs of our two theorems rely on a Deift-Zhou \cite{DZ1993, DZ1994} steepest descent analysis of the RH problem (\ref{preRHm}). 
There are two critical points (i.e., solutions of the equation $\partial_k\Phi = 0$) which are relevant for this analysis; they are given by $\pm k_0$, where (see Figure \ref{criticalpointsfig})
$$k_0 := \begin{cases}  i\sqrt{\frac{\zeta}{12}}, & \zeta \geq 0, \\
\sqrt{\frac{|\zeta|}{12}}, & \zeta < 0.
\end{cases}$$
%, we can obtain detailed asymptotic formulas for $m(x,t,k)$, and hence also for $u(x,t)$, as $(x,t)$ tends to infinity.

\subsection{Notation}\label{notationsubsec}
We will derive the asymptotics in the two halves of Sector IV corresponding to $x \leq 0$ and $x \geq 0$ separately; we use the notation 
$$\IV_\leq := \IV \cap \{x \leq 0\} \quad  \text{and} \quad \IV_\geq := \IV \cap \{x \geq 0\}.$$ 
If $D$ is an open connected subset of $\C$ bounded by a piecewise smooth curve $\partial D \subset \hat{\C} := \C \cup \{\infty\}$, then we write $\dot{E}^2(D)$ for the space of all analytic functions $f:D \to \C$ with the property that there exist curves $\{C_n\}_1^\infty$ in $D$ tending to $\partial D$ in the sense that $C_n$ eventually surrounds each compact subset of $D$ and such that
$$\sup_{n \geq 1} \int_{C_n} |f(z)|^2 |dz| < \infty.$$
If $D = D_1 \cup \cdots \cup D_n$ is a finite union of such open subsets, then $\dot{E}^2(D)$ denotes the space of functions $f:D\to \C$ such that $f|_{D_j} \in \dot{E}^2(D_j)$ for each $j$.
We can formulate the classical RH problem (\ref{preRHm}) in this setting as
\begin{align}\label{RHm}
\begin{cases}
m(x, t, \cdot) \in I + \dot{E}^2(\C \setminus \R),\\
m_+(x,t,k) = m_-(x, t, k) v(x, t, k) \quad \text{for a.e.} \ k \in \R.
\end{cases}
\end{align}
We write $E^\infty(D)$ for the space of bounded analytic functions on $D$.
We use $c>0$ and $C > 0$ to denote generic constants which may change within a computation. 

Given a piecewise smooth contour $\Gamma \subset \hat{\C}$, we denote the Cauchy operator $\mathcal{C} \equiv \mathcal{C}^\Gamma$ associated with $\Gamma$ by
\begin{align}\label{Cauchytransform}
(\mathcal{C}h)(k) = \frac{1}{2\pi i} \int_{\Gamma} \frac{h(k')dk'}{k' - k}, \qquad k \in \C \setminus \Gamma.
\end{align}
If $h \in L^2(\Gamma)$, then the left and right nontangential boundary values of $\mathcal{C} h$, which we denote by $\mathcal{C}_+ h$ and $\mathcal{C}_- h$ respectively, exist a.e. on $\Gamma$ and belong to $L^2(\Gamma)$. We let $\mathcal{B}(L^2(\Gamma))$ denote the space of bounded linear operators on $L^2(\Gamma)$ and, for $w \in L^\infty(\Gamma)$, we define $\mathcal{C}_{w} \in \mathcal{B}(L^2(\Gamma))$ by $\mathcal{C}_{w}h = \mathcal{C}_-(hw)$. 
We write $f^*(k) := \overline{f(\bar{k})}$ for the Schwartz conjugate of a function $f(k)$. 
If $A$ is an $n \times m$ matrix, we define $|A|\geq 0$ by
$|A|^2 = \sum_{i,j} |A_{ij}|^2$; then $|A + B| \leq |A| + |B|$ and $|AB| \leq |A| |B|$.
%$$im_x + u\sigma_3\sigma_1m = k[\sigma_3, m]$$
%i.e.
%$$m_x + ik[\sigma_3, m] = i\begin{pmatrix} 0 & u \\-u & 0 \end{pmatrix}m$$

%For the purpose of the proof of Theorem \ref{mainth1}, w

%the leading behavior of the solution $u(x,t)$ is given by 
%$$u(x,t) = \sqrt{\frac{\nu}{3tk_0}} \cos\big(16tk_0^3 - \nu \ln(192 t k_0^3) + \phi(k_0)) + \cdots,$$

\section{Asymptotics in Sector $\IV_\geq$}\label{sectorIVgeqsec}
We first consider the asymptotics in Sector $\IV_\geq$ given by
\begin{align*}
\IV_\geq = \{(x,t) \, | \, t\geq 1 \; \text{and} \; 0 \leq x \leq M t^{1/3}\},
\end{align*}
where $M > 0$ is a constant. In this sector, the critical points $\pm k_0 = \pm i \sqrt{\frac{x}{12 t}}$ are pure imaginary and approach $0$ at least as fast as $t^{-1/3}$ as $t \to \infty$, i.e., $|k_0| \leq C t^{-1/3}$. 

\subsection{The solution $m^{(1)}$}
The first step is to transform the RH problem in such a way that the new jump matrix approaches the identity matrix as $t \to \infty$. Let $\Gamma^{(1)} \subset \C$ denote the contour 
$$\Gamma^{(1)} := \R \cup e^{\frac{\pi i}{6}}\R \cup e^{-\frac{\pi i}{6}}\R$$ 
oriented to the right as in Figure \ref{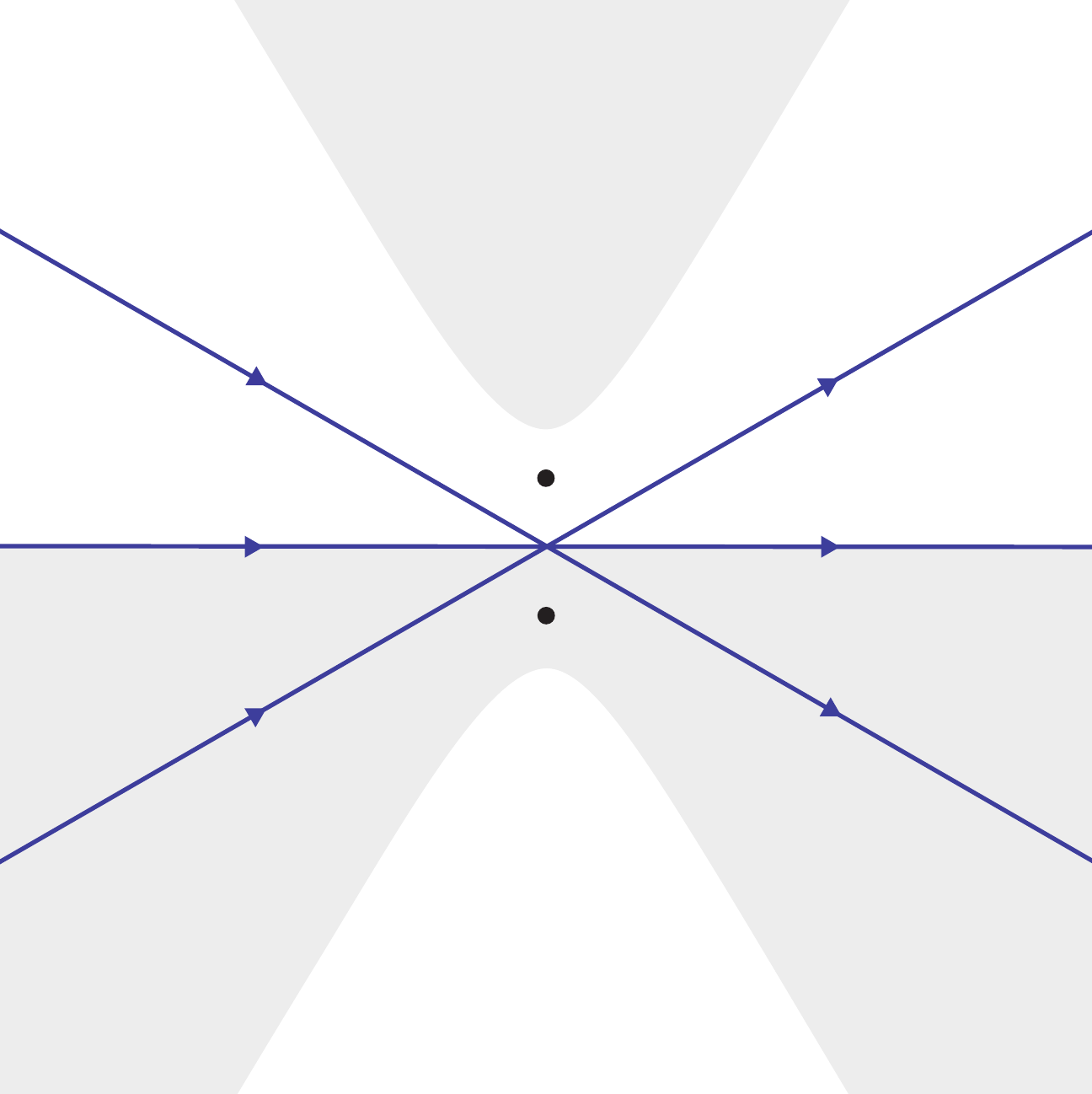} and let $V$ and $V^*$ denote the open subsets shown in the same figure:
\begin{align*}
& V = \{\arg k \in (0, \pi/6)\} \cup \{\arg k \in (5\pi/6, \pi)\},
	\\
& V^* = \{\arg k \in (-\pi, -5\pi/6)\} \cup \{\arg k \in (-\pi/6, 0)\}.
\end{align*}

We first need to decompose $r$ into an analytic part $r_a$ and a small remainder $r_r$.
Let $N \geq 2$ be an integer. Assume for definiteness that $N$ is even.

\begin{figure}
\begin{center}
\begin{overpic}[width=.5\textwidth]{Gamma1geq.pdf}
      \put(103,48.5){\small $\Gamma^{(1)}$}
 %     \put(44,55){\small $k_0$}
%      \put(43,40){\small $-k_0$}
      \put(73,66.5){\small $1$}
      \put(24,66.5){\small $1$}
      \put(73,30.5){\small $2$}
      \put(24,30.5){\small $2$}
      \put(75,45){\small $3$}
      \put(22,45){\small $3$}
      \put(12,57){\small $V$}
      \put(12,39){\small $V^*$}
      \put(85,57){\small $V$}
      \put(85,39){\small $V^*$}
      \put(41,88){\small $\re \Phi > 0$}
    \end{overpic}
    \vspace{.5cm}
     \begin{figuretext}\label{Gamma1geq.pdf}
        The contour  $\Gamma^{(1)}$ and the sets $V$ and $V^*$ in the case of Sector $\IV_\geq$. The region where $\re \Phi > 0$ is shaded. 
     \end{figuretext}
     \end{center}
\end{figure}

\begin{lemma}[Analytic approximation for $\zeta \geq 0$]\label{decompositionlemmageq}
There exists a decomposition
\begin{align*}
& r(k) = r_{a}(t, k) + r_{r}(t, k), \qquad t \geq 1, \ k \in \R,
\end{align*}
where the functions $r_{a}$ and $r_{r}$ have the following properties:
\begin{enumerate}[$(a)$]
\item For each $t\geq1$, $r_{a}(t, k)$ is defined and continuous for $k \in \bar{V}$ and analytic for $k \in V$, where $\bar{V}$ denotes the closure of $V$.

\item The function $r_{a}$ obeys the following estimates uniformly for $\zeta \geq 0$ and $t \geq 1$:
\begin{align*}
& |r_{a}(t, k)| \leq \frac{C}{1 + |k|} e^{\frac{t}{4}|\re \Phi(\zeta,k)|}, \qquad
  k \in \bar{V},
\end{align*}
and
\begin{align}\label{raat1IV}
\bigg|r_{a}(t, k) - \sum_{j=0}^N \frac{r^{(j)}(0)}{j!} k^j\bigg| \leq C |k|^{N+1} e^{\frac{t}{4}|\re \Phi(\zeta,k)|}, \qquad k \in \bar{V}.
\end{align}

\item The $L^1$ and $L^\infty$ norms of $r_{r}(t, \cdot)$ on $\R$ are $O(t^{-N})$ as $t \to \infty$.

\item $r_{a}(t, k) = -r_{a}^*(t, -k)$ for $k \in \bar{V}$ and $r_{r}(t, k) = -r_{r}^*(t, -k)$ for $k \in \R$.

\end{enumerate}
\end{lemma}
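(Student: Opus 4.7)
The plan is to build the decomposition in two stages. First, extract the Taylor polynomial
\[
P(k) := \sum_{j=0}^{N}\frac{r^{(j)}(0)}{j!}k^j
\]
of $r$ at $k = 0$. Differentiating the symmetry $r(k) = -\overline{r(-k)}$ at $k = 0$ gives $r^{(j)}(0) = (-1)^{j+1}\overline{r^{(j)}(0)}$, hence $P^*(-k) = -P(k)$, so the polynomial $P$ is automatically compatible with the symmetry in (d). The remainder $R(k) := r(k) - P(k)$ lies in $\mathcal{S}(\R)$, vanishes to order $N+1$ at $k=0$, and inherits the anti-symmetry; consequently $Q(k) := R(k)/k^{N+1}$ is again Schwartz on $\R$, which will let us expose the prefactor $|k|^{N+1}$ needed in (\ref{raat1IV}).

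Second, decompose $Q$ via a Fourier construction adapted to the phase. Because $\zeta \geq 0$, the real phase $\phi(\zeta,k) := 4k^3 + \zeta k$ (for which $\Phi = 2i\phi$) has $\phi'(k) = 12k^2 + \zeta > 0$ on $\R$ and is a $C^\infty$ diffeomorphism of $\R$. Define
\[
\hat Q(s) := \int_\R Q(k)\, e^{-is\phi(k)}\phi'(k)\, dk \in \mathcal{S}(\R),
\]
apply Fourier inversion, and split at the $t$-dependent threshold $s = -t/2$:
\[
Q_a(t,k) := \frac{1}{2\pi}\int_{-t/2}^{\infty} \hat Q(s)\, e^{is\phi(k)}\, ds, \qquad Q_r(t,k) := \frac{1}{2\pi}\int_{-\infty}^{-t/2} \hat Q(s)\, e^{is\phi(k)}\, ds.
\]
I then set $r_a := P + k^{N+1}Q_a$ and $r_r := k^{N+1}Q_r$. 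A short calculation shows that on $V$ one has $\im \phi(k) > 0$ with $2\im\phi(k) = |\re\Phi(\zeta,k)|$, so $|e^{is\phi(k)}| \leq e^{t|\re\Phi|/4}$ whenever $s \geq -t/2$. This yields analyticity of $Q_a$ in $V$ (hence of $r_a$), continuity on $\bar V$ by dominated convergence, and the pointwise bound $|Q_a(t,k)| \leq Ce^{t|\re\Phi|/4}$; multiplication by $k^{N+1}$ then delivers (\ref{raat1IV}).

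The remaining estimates hinge on integration by parts in $s$ within the Fourier representations. Each IBP introduces a factor $\phi(k)^{-1}$, which behaves like $|k|^{-1}$ for $|k| \geq 1$ since $\phi(k) \sim 4k^3$, and converts $\hat Q$ into a higher derivative; iterating $m$ times yields $|Q_r(t,k)| \leq C_{m,M} t^{-M}|\phi(k)|^{-m}$ away from $k = 0$, while the $L^\infty$-bound $\|Q_r(t,\cdot)\|_{L^\infty(\R)} = O(t^{-M})$ handles $|k| \leq 1$. Choosing $m$ with $3m > N+2$ makes $\|k^{N+1} Q_r\|_{L^1(\R)} = O(t^{-M})$ for every $M$, yielding (c). For (b), the key is the identity $r_a = r - k^{N+1}Q_r$ on $\R$, which combined with the Schwartz decay of $r$ and the IBP estimates above gives $(1+|k|)|r_a(t,k)| \leq C$ on $\R$; inside $V$ the cubic growth of $|\re\Phi|$ along the rays bounding $V$ ensures that the exponential $e^{t|\re\Phi|/4}$ absorbs any polynomial growth from $P$. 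The main technical obstacle is organizing these integration-by-parts estimates so that (b) and (c) hold simultaneously on all of $\bar V$ with constants uniform in $t \geq 1$. The exact symmetry (d) is then enforced by averaging: replace $r_a(t,k)$ by $\frac{1}{2}(r_a(t,k) - r_a^*(t,-k))$ and $r_r$ analogously. Since $r$ and $P$ already satisfy the symmetry and averaging preserves analyticity, continuity, and all of the previous estimates, this completes the construction.
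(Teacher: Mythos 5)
Your overall strategy (split off a local approximant at $k=0$, pass to the variable $\phi$ conjugate to the phase, Fourier-transform the remainder, and cut the Fourier integral at $s=-ct$) is the same as the paper's, but the choice of approximant breaks the argument in two concrete places. First, your $Q(k)=\bigl(r(k)-P(k)\bigr)/k^{N+1}$ is \emph{not} Schwartz: since $P$ is a polynomial of degree $N$, at infinity $Q(k)\sim -\tfrac{r^{(N)}(0)}{N!}k^{-1}$, so in the $\phi$-variable $\tilde Q(\phi)\sim c\,\phi^{-1/3}$, which is neither in $L^1$ nor in $L^2$; the integral defining $\hat Q$ (with Jacobian $\phi'(k)\sim 12k^2$) diverges, the bound $|Q_a|\leq C\|\hat Q\|_{L^1}e^{\frac{t}{4}|\re\Phi|}$ is unavailable, and the integrations by parts in $s$ used for (c) require decay of $\hat Q^{(m)}$ that you cannot have. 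Second, even granting the construction, the first bound in (b) fails for $r_a=P+k^{N+1}Q_a$: for $k=\rho e^{i\theta}\in V$ with $\rho$ large and $\theta$ small, $|\re\Phi|\approx 2(4\rho^3\sin 3\theta+\zeta\rho\sin\theta)$ is arbitrarily close to $0$, so the exponential does \emph{not} absorb the growth $|P(k)|\sim c\rho^N$; the needed decay $\frac{C}{1+|k|}$ would require a cancellation between $P$ and $k^{N+1}Q_a$ off the real axis that you never establish (on $\R$ it follows from $r_a=r-k^{N+1}Q_r$, but this identity does not propagate into $V$ without an additional Phragm\'en--Lindel\"of-type argument). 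There is a third, related gap: because $d/d\phi=\frac{1}{24k^2}d/dk$ degenerates at $k=0$, getting $\hat F\in H^{N+1}$ (hence $|s|^{-N-1}$ decay of the tail, hence the $O(t^{-N})$ bound in (c)) requires the remainder to vanish at $k=0$ to order roughly $3N+4$ \emph{after} dividing by $k^{N+1}$; matching only to order $N+1$ uses up all the vanishing, and $\tilde Q(\phi)=Q(k(\phi))$ is then merely H\"older-$1/3$ at $\phi=0$.

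The paper's proof repairs exactly these points by replacing the Taylor polynomial with a \emph{rational} approximant $r_0(k)=\frac{1}{(k^2+1)^{3N+4}}\sum_{j=0}^{2N+1}(ka_j+ib_j)k^{2j}$, which matches $r$ to order $4N+4$ at the origin, is analytic on $\bar V$ (its poles $\pm i$ lie outside $\bar V$), and decays like $|k|^{-2N-5}$ at infinity; the remainder $f=r-r_0$ then has enough vanishing at $0$ and enough decay at $\infty$ for $F(\phi)=\frac{(k^2+1)^{N+2}}{k^{N+1}}f(k)$ to lie in $H^{N+1}(\R)$, and the weight $\frac{k^{N+1}}{(k^2+1)^{N+2}}$ reinstated in $f_a,f_r$ supplies the $\frac{C}{1+|k|}$ decay. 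The paper also uses the $\zeta$-independent phase $\phi=8k^3$ together with $\re\Phi(\zeta,k)\leq\re\Phi(0,k)$ on $\bar V$, which keeps $r_a$ independent of $x$ as the lemma's statement requires, whereas your $\zeta$-dependent diffeomorphism would produce $r_a(x,t,k)$. To salvage your write-up you would need to adopt a decaying approximant of sufficiently high contact order; as it stands, the construction of $\hat Q$ and the verification of the first estimate in (b) do not go through.
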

\begin{proof}
The symmetry (\ref{rsymm}) implies that $\re r(k) = -\re r(-k)$ and $\im r(k) = \im r(-k)$ for $k \in \R$. It follows that
\begin{align}\label{rer3imr3}
(k^2 + 1)^{3N+4}k^{-1} \re r(k) \quad \text{and} \quad (k^2 + 1)^{3N+4}\im r(k)
\end{align}
are smooth even functions of $k \in \R$. Considering their Taylor expansions at $k = 0$, we infer the existence of real coefficients $\{a_j, b_j\}_{j=0}^{2N+1}$ such that
$$\begin{cases}
(k^2 + 1)^{3N+4}k^{-1}\re r(k) = \sum_{j = 0}^{2N+1} a_j k^{2j} + O(k^{4N+4}), \\
(k^2 + 1)^{3N+4}\im r(k) = \sum_{j = 0}^{2N+1} b_j k^{2j} + O(k^{4N+4}),
\end{cases} \ k \to 0,
$$
i.e.,
$$r(k) = r_0(k) + O\big(k^{4N+4}\big), \qquad k \to 0,$$
where the rational function $r_0(k)$ is defined by
$$r_0(k) = \frac{1}{(k^2+1)^{3N+4}} \sum_{j = 0}^{2N+1} (ka_j + ib_j) k^{2j}.$$
Note that $r_0(k)$ is analytic away from $k = \pm i$ and satisfies $r_0(k) = -r_0^*(-k)$. 
We conclude that $f(k) := r(k) - r_0(k)$ also obeys the symmetry  $f(k) = -f^*(-k)$, $k \in \R$, and that
\begin{align}\label{fcoincideIV}
 \frac{d^n f}{dk^n} (k) =
\begin{cases}
O(k^{4N+4-n}), \quad& k \to 0, 
	\\
O(k^{-2N-5}), &  k \to \pm \infty, 
 \end{cases}
 \  n = 0,1,\dots, N+1.
\end{align}

The decomposition of $r(k)$ can now be derived as follows.
The map $k \mapsto \phi \equiv \phi(k)$ defined by
\begin{align*}
  \phi = -i\Phi(0, k) = 8k^3
\end{align*}  
is an increasing bijection  $\R \to \R$, so we may define a function $F(\phi)$ by
\begin{align}\label{Fdef2I}
F(\phi) = \frac{(k^2+1)^{N+2}}{k^{N+1}} f(k), \qquad \phi \in \R.
\end{align}
The function $F(\phi)$ is smooth for $\phi \in \R \setminus \{0\}$ and 
\begin{align*}
\frac{d^n F}{d \phi^n}(\phi) = \bigg(\frac{1}{24k^2} \frac{d}{d k}\bigg)^n 
\bigg[\frac{(k^2+1)^{N+2}}{k^{N+1}} f(k)\bigg], \qquad \phi \in \R \setminus \{0\}.
\end{align*}
In view of (\ref{fcoincideIV}), we have $F \in C^N(\R)$, and, for $n = 0, 1, \dots, N+1$,
$$\bigg\|\frac{d^n F}{d \phi^n}\bigg\|_{L^2(\R)}^2
= \int_\R \bigg|\frac{d^n F}{d \phi^n}(\phi)\bigg|^2 d\phi
= \int_\R \bigg|\frac{d^n F}{d \phi^n}(\phi)\bigg|^2 \frac{d \phi}{d k} dk < \infty.$$
It follows that $F$ belongs to the Sobolev space $H^{N+1}(\R)$ and the Fourier transform $\hat{F}(s)$ defined by
\begin{align}\label{FhatdefIV}
\hat{F}(s) = \frac{1}{2\pi} \int_{\R} F(\phi) e^{-i\phi s} d\phi,
\end{align}
satisfies 
\begin{align}\label{FFhatIV}
F(\phi) =  \int_{\R} \hat{F}(s) e^{i\phi s} ds
\end{align}
and, by the Plancherel theorem, $\|s^{N+1} \hat{F}\|_{L^2(\R)} < \infty$.
By (\ref{Fdef2I}) and (\ref{FFhatIV}), we have
$$\frac{k^{N+1}}{(k^2+1)^{N+2}} \int_{\R} \hat{F}(s) e^{s\Phi(0,k)} ds 
= f(k), \qquad  k \in \R.$$
Let us write 
$$f(k) = f_{a}(t, k) + f_{r}(t, k), \qquad t\geq1, \ k \in \R,$$
where the functions $f_a$ and $f_r$ are defined by
\begin{align*}
& f_a(t,k) = \frac{k^{N+1}}{(k^2+1)^{N+2}}\int_{-\frac{t}{4}}^{\infty} \hat{F}(s) e^{s\Phi(0,k)} ds, \qquad   t\geq1, \  k \in \bar{V},  
	\\
& f_r(t,k) = \frac{k^{N+1}}{(k^2+1)^{N+2}}\int_{-\infty}^{-\frac{t}{4}} \hat{F}(s) e^{s\Phi(0,k)} ds,\qquad  t\geq1, \   k \in \R.
\end{align*}
Since 
$$\re \Phi(\zeta, k) \leq \re \Phi(0,k) \leq 0$$ 
for $\zeta \geq 0$ and $k \in \bar{V}$, the function $f_a(t, \cdot)$ is continuous in $\bar{V}$ and analytic in $V$, and we find
\begin{align*}\nonumber
 |f_a(t, k)| 
&\leq \frac{|k|^{N+1}}{|k^2+1|^{N+2}} \|\hat{F}\|_{L^1(\R)}  \sup_{s \geq -\frac{t}{4}} e^{s \re \Phi(0,k)}
\leq \frac{C|k|^{N+1}}{1 + |k|^{2N+4}}  e^{\frac{t}{4} |\re \Phi(0,k)|} 
	\\ 
&\leq \frac{C|k|^{N+1}}{1 + |k|^{2N+4}} e^{\frac{t}{4} |\re \Phi(\zeta,k)|}, \qquad \zeta \geq 0, \  t \geq 1, \  k \in \bar{V},
\end{align*}
and, by the Cauchy-Schwartz inequality,
\begin{align*}\nonumber
|f_r(t, k)| & \leq \frac{|k|^{N+1}}{|k^2+1|^{N+2}}  \int_{-\infty}^{-\frac{t}{4}} s^{N+1} |\hat{F}(s)| s^{-N-1} ds
	\\\nonumber
& \leq \frac{C}{1 + |k|^2}  \| s^{N+1} \hat{F}(s)\|_{L^2(\R)} \sqrt{\int_{-\infty}^{-\frac{t}{4}} |s|^{-2N-2} ds}  
 	\\ \nonumber
&  \leq \frac{C}{1 + |k|^2} t^{-N - \frac{1}{2}}, \qquad \zeta \geq 0, \  t \geq 1, \  k \in \R.
\end{align*}
In particulary, the $L^1$ and $L^\infty$ norms of $f_r$ on $\R$ are $O(t^{-N - \frac{1}{2}})$. 
The symmetries $f(k) = -f^*(-k)$ and $\phi(k) = -\phi(-k)$ together with the fact that $N$ is even imply that $F(\phi) = \overline{F(-\phi)}$. Hence $\hat{F}(s)$ is real valued for $s \in \R$. Since $\Phi(\zeta, k) = \Phi^*(\zeta, -k)$, this leads to the symmetries $f_a(t, k) = -f_a^*(t, -k)$ and $f_r(t, k) = -f_r^*(t, -k)$.
Letting
\begin{align*}
& r_{a}(t, k) = r_0(k) + f_a(t, k), \qquad t \geq 1, \ k \in \bar{V},
	\\
& r_{r}(t, k) = f_r(t, k), \qquad t \geq 1, \ k \in \R,
\end{align*}
we obtain a decomposition of $r$ with the asserted properties.
\end{proof}

Let $r = r_a + r_r$ be a decomposition of $r$ as in Lemma \ref{decompositionlemmageq} and define the sectionally analytic function $m^{(1)}$ by 
\begin{align}\label{m1def}
m^{(1)}(x,t,k) = m(x,t,k)G(x,t,k),
\end{align}
where
\begin{align}\label{GdefII}
G(x,t,k) = \begin{cases} 
\begin{pmatrix}  
 1 & 0 \\
 -r_{a} e^{t\Phi}  & 1
\end{pmatrix}, & k \in V,
	\\
\begin{pmatrix}  
 1 & - r_{a}^* e^{-t\Phi}  \\
0 & 1
\end{pmatrix}, & k \in V^*,
	\\
I, & \text{elsewhere}.
\end{cases}
\end{align}
Lemma \ref{decompositionlemmageq} implies that
$$G(x,t,\cdot) \in I + (\dot{E}^2 \cap E^\infty)(V \cup V^*).$$
Thus $m$ satisfies the RH problem (\ref{RHm}) if and only if $m^{(1)}$ satisfies the RH problem 
\begin{align}\label{RHm1}
\begin{cases}
m^{(1)}(x, t, \cdot) \in I + \dot{E}^2(\C \setminus \Gamma^{(1)}),\\
m^{(1)}_+(x,t,k) = m^{(1)}_-(x, t, k) v^{(1)}(x, t, k) \quad \text{for a.e.} \ k \in \Gamma^{(1)},
\end{cases}
\end{align}
where 
\begin{align}\nonumber
& v_1^{(1)} = \begin{pmatrix}  
 1 & 0 \\
 r_{a} e^{t\Phi}  & 1
\end{pmatrix}, \qquad
 v_2^{(1)} = \begin{pmatrix}  
 1 & - r_{a}^* e^{-t\Phi}  \\
 0  & 1
\end{pmatrix}, 
	\\\label{v1def}
&  v_3^{(1)} = \begin{pmatrix}  
 1 - |r_{r}|^2 & - r_{r}^* e^{-t\Phi}  \\
 r_{r} e^{t\Phi}  & 1
\end{pmatrix},
\end{align}
and $v_j^{(1)}$ denotes the restriction of $v^{(1)}$ to the subcontour labeled by $j$ in Figure \ref{Gamma1geq.pdf}.

\subsection{Local model}
The RH problem for $m^{(1)}$ has the property that the matrix $v^{(1)} - I$ is uniformly small as $t \to \infty$ everywhere on $\Gamma^{(1)}$ except possibly near the origin. 
Hence we only have to consider a neighborhood of the origin when computing the long-time asymptotics of $m^{(1)}$. In this subsection, we find a local solution $m_0$ which approximates $m^{(1)}$ near $0$. 

We introduce the variables $y$ and $z$ by
\begin{align}\label{yzdefIV}
y := \frac{x}{(3t)^{1/3}}, \qquad z := (3t)^{1/3}k.
\end{align}
Note that $0 \leq y \leq C$ in Sector $\IV_\geq$. The definitions of $y$ and $z$ are chosen such that
\begin{align}\label{tPhiminusyzIV}
t\Phi(\zeta, k) = 2i\bigg(y z + \frac{4z^3}{3}\bigg).
\end{align}

Fix $\epsilon > 0$. Let $D_\epsilon(0) = \{k \in \C\, | \, |k| < \epsilon\}$ denote the open disk of radius $\epsilon$ centered at the origin. The map $k \mapsto z$ maps $D_\epsilon(0)$ onto the open disk $D_{(3t)^{1/3}\epsilon}(0)$ of radius $(3t)^{1/3}\epsilon$ in the complex $z$-plane.

Let $\mathcal{Y}^\epsilon = (\Gamma^{(1)} \cap D_\epsilon(0))\setminus \R$. The map $k \mapsto z$ takes $\mathcal{Y}^\epsilon$ onto $Y \cap \{|z| < (3t)^{1/3}\epsilon\}$, where $Y$ is the contour defined in (\ref{YdefIV}).
We write $\mathcal{Y}^\epsilon = \cup_{j=1}^4\mathcal{Y}_j^\epsilon$, where $\mathcal{Y}_j^\epsilon$ denotes the part of $\mathcal{Y}^\epsilon$ that maps into $Y_j$, see Figure \ref{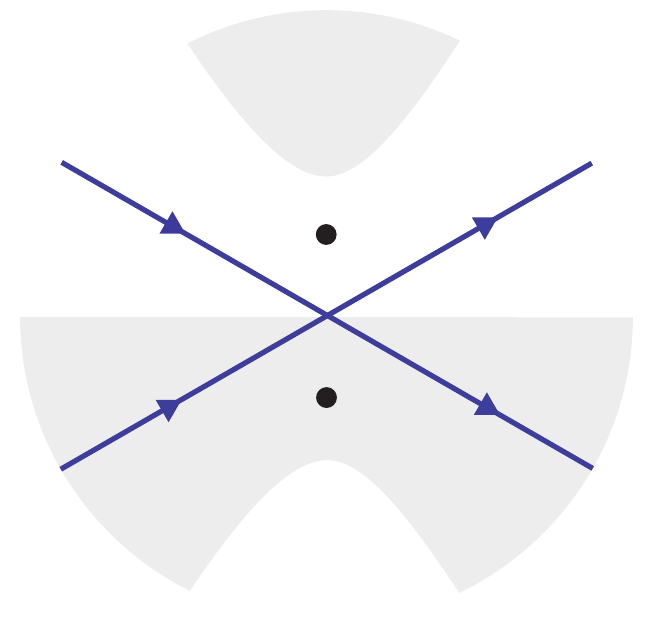}.

\begin{figure}
\begin{center}
 \begin{overpic}[width=.4\textwidth]{calYgeq.pdf}
 \put(71,69){\small $\mathcal{Y}_1^\epsilon$}
 \put(24,69){\small $\mathcal{Y}_2^\epsilon$}
 \put(24,28){\small $\mathcal{Y}_3^\epsilon$}
 \put(71,28){\small $\mathcal{Y}_4^\epsilon$}
 \put(48.5,66){\small $k_0$}
 \put(45,30){\small $-k_0$}
 \end{overpic}
   \begin{figuretext}\label{calYgeq.pdf}
      The contour $\mathcal{Y}^\epsilon = \cup_{j=1}^4\mathcal{Y}_j^\epsilon$.
      \end{figuretext}
   \end{center}
\end{figure}

The long-time asymptotics in Sector $\IV_\geq$ is related to the solution $m^Y(y,t,z)$ of the RH problem (\ref{RHmYIV}) with the polynomial $p$ in (\ref{psumIV}) given by 
\begin{align}\label{pNdefIV}
  p(t,z) = \sum_{j=0}^N \frac{r^{(j)}(0)}{j!} k^j = \sum_{j=0}^N \frac{r^{(j)}(0)}{j!3^{j/3}} \frac{z^j}{t^{j/3}}.
\end{align}
In particular, the first few coefficients in (\ref{psumIV}) are 
\begin{align}\label{sp1p2}
s = r(0) \in i\R, \qquad p_1 = \frac{r'(0)}{3^{1/3}} \in \R, \qquad p_2 = \frac{r''(0)}{2 \times 3^{2/3}} \in i\R.
\end{align}
Define $m_0(x,t,k)$ for $k \in D_\epsilon(0)$ by
\begin{align}\label{m0defIV}
m_0(x, t, k) = m^Y(y, t, z), \qquad k \in D_\epsilon(0).
\end{align}
By Lemma \ref{YlemmaIV}, we can choose $T \geq 1$ such that $m_0$ is well-defined whenever $(x,t) \in \IV_\geq^T$, where $\IV_\geq^T$ denotes the part of $\IV_\geq$ where $t\geq T$, i.e.,
$$\IV_\geq^T := \IV_\geq \cap \{t \geq T\}.$$ 
By (\ref{rsymm}), we have $p(t,z) = -\overline{p(t,-\bar{z})}$. Hence (\ref{mYsymmIV}) implies that $m_0$ obeys the same symmetries (\ref{msymm}) as $m$.

\begin{lemma}\label{m0lemmaIV}
For each $(x,t) \in \IV_\geq^T$, the function $m_0(x,t,k)$ defined in (\ref{m0defIV}) is an analytic function of $k \in D_\epsilon(0) \setminus \mathcal{Y}^\epsilon$ such that
\begin{align}\label{m0boundIV}
|m_0(x,t,k) | \leq C, \qquad (x,t) \in \IV_\geq^T, \ k \in D_\epsilon(0) \setminus \mathcal{Y}^\epsilon.
\end{align}
Across $\mathcal{Y}^\epsilon$, $m_0$ obeys the jump condition $m_{0+} =  m_{0-} v_0$, where the jump matrix $v_0$ satisfies, for each $1 \leq p \leq \infty$,
\begin{align}\label{v2v0estimateIV}
 \|v^{(1)} - v_0\|_{L^p(\mathcal{Y}^\epsilon)} \leq Ct^{-\frac{N+1}{3}}, \qquad (x,t) \in \IV_\geq^T.
\end{align}	
Furthermore, 
\begin{align}\label{m0invexpansionIV}
 m_0(x,t,k)^{-1}  = I +  \sum_{j=1}^N \sum_{l=0}^N \frac{m_{0,jl}(y)}{k^j t^{(j+l)/3}} +  O\big(t^{-\frac{N+1}{3}}\big)
\end{align}  
uniformly for $(x,t) \in \IV_\geq^T$ and $k \in \partial D_\epsilon(0)$, where the coefficients $m_{0,jl}(y)$ are smooth functions of $y \in [0,\infty)$.
In particular, 
\begin{align}\label{m0LinftyestimateIV}
\|m_0(x,t,\cdot)^{-1} - I\|_{L^\infty(\partial D_\epsilon(0))} = O\big(t^{-1/3}\big), 
\end{align}
and 
\begin{align}\label{m0circleIV}
& \frac{1}{2\pi i}\int_{\partial D_\epsilon(0)}(m_0(x,t,k)^{-1} - I) dk
= \sum_{l=1}^N \frac{g_l(y)}{t^{l/3}} + O\big(t^{-\frac{N+1}{3}}\big),
\end{align}	
uniformly for $(x,t) \in \IV_\geq^T$, where $g_{l+1}(y) = m_{0,1l}(y)$ for each $l \geq 0$; the first coefficient is given by
\begin{align}\label{g1explicitIV}
g_1(y) =  - 
\frac{3^{-1/3}}{2} \begin{pmatrix} -i\int_y^\infty u_P^2(y'; s, 0, -s)dy' & u_P(y; s, 0, -s) \\ u_P(y; s, 0, -s) & i\int_y^\infty u_P^2(y'; s, 0, -s)dy'  \end{pmatrix}
\end{align}
with $s = r(0)$. If $r(0) = 0$, then
\begin{align}\nonumber
g_1(y) = &\; 0,
	\\ \nonumber
g_2(y) = &\; -\frac{r'(0)}{4\times 3^{2/3}} \Ai'(y)\sigma_1
	\\ \label{g1g2g3explicit}
g_3(y) = &\; \frac{i r'(0)^2}{24} \int_y^{\infty} (\Ai'(y'))^2dy' \sigma_3
 +  \frac{ir''(0)}{48} y\Ai(y) \sigma_1.
\end{align}
\end{lemma}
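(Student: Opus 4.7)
The plan is to reduce every assertion of the lemma to a property of the model solution $m^Y$ (supplied by Lemma \ref{YlemmaIV}), transported back to the $k$-plane via the rescaling $z=(3t)^{1/3}k$. Analyticity of $m_0$ on $D_\epsilon(0)\setminus \mathcal{Y}^\epsilon$ and the uniform bound (\ref{m0boundIV}) are immediate: the map $k\mapsto z$ is a biholomorphism of $D_\epsilon(0)$ onto $D_{(3t)^{1/3}\epsilon}(0)$ carrying $\mathcal{Y}^\epsilon$ onto a sub-arc of $Y$, and $m^Y$ is analytic and uniformly bounded there by Lemma \ref{YlemmaIV}.

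For the jump estimate (\ref{v2v0estimateIV}) I would compare $v^{(1)}$ and $v_0$ branch by branch. On $\mathcal{Y}_1^\epsilon$ the two matrices differ in one off-diagonal entry, $r_a(t,k)e^{t\Phi}$ versus $p(t,z)e^{t\Phi}$, with $p$ as in (\ref{pNdefIV}). By Lemma \ref{decompositionlemmageq}(b),
\[
|r_a(t,k)-p(t,z)|\,\le\, C|k|^{N+1}e^{(t/4)|\re\Phi(\zeta,k)|},
\]
and the contour $\Gamma^{(1)}$ was chosen so that $\re\Phi\le -c|k|^3$ on $\mathcal{Y}_1^\epsilon$ for $|k|\le\epsilon$, yielding an integrand bounded by $C|k|^{N+1}e^{-ct|k|^3}$. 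Passing to the $z$-variable introduces the factor $t^{-(N+1)/3}$, and the remaining Gaussian-type integral is $O(1)$ in every $L^p$ norm; the other three arms of $\mathcal{Y}^\epsilon$ are treated identically using the symmetry (\ref{rsymm}) and the Schwartz-conjugate entry.

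For (\ref{m0invexpansionIV})--(\ref{m0circleIV}), I would invoke the large-$z$ expansion of $m^Y$ from Lemma \ref{YlemmaIV},
\[
m^Y(y,t,z)^{-1}=I+\sum_{j=1}^{N}\frac{M_j(y,t)}{z^j}+O\bigl(z^{-N-1}\bigr),
\]
whose coefficients $M_j$ depend on $t$ only through the parameters $(s,p_1,\dots,p_N)$ of $p(t,z)$. Substituting $p_j=r^{(j)}(0)\,t^{-j/3}/(j!\,3^{j/3})$ and expanding in powers of $t^{-1/3}$ gives $M_j(y,t)=\sum_{l=0}^{N}\tilde m_{0,jl}(y)t^{-l/3}+O(t^{-(N+1)/3})$; combining with $z^{-j}=(3t)^{-j/3}k^{-j}$ and absorbing the constants $3^{-j/3}$ produces (\ref{m0invexpansionIV}), while the $L^\infty$ estimate (\ref{m0LinftyestimateIV}) follows by taking $|k|=\epsilon$. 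The identity (\ref{m0circleIV}) is then a residue computation: on $\partial D_\epsilon(0)$ only the terms $1/k$ survive, giving $g_{l+1}(y)=m_{0,1l}(y)$.

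The main work is the explicit identification of $g_1,g_2,g_3$. The expression (\ref{g1explicitIV}) is just the standard reading-off of the Painlev\'e II transcendent from the first coefficient of $m^Y$ when $s=r(0)$. When $r(0)=0$, the polynomial $p(t,z)$ loses its constant term, so $u_P(\,\cdot\,;0,0,0)\equiv 0$ and $g_1\equiv 0$. In this regime $p(t,z)=p_1 z+p_2 z^2+O(t^{-1})$ is small, so I would linearise the RH problem for $m^Y$ around $I$: the leading correction satisfies a triangular RH problem whose off-diagonal entry is $p_1 z\,e^{t\Phi}$, which is precisely (a multiple of) the Airy parametrix evaluated at argument $y$; this produces the $\Ai'(y)\sigma_1$ term of $g_2$. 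The next correction splits into a diagonal $p_1^2$-piece (the quadratic self-interaction, yielding the $\int_y^{\infty}(\Ai'(y'))^2 dy'\,\sigma_3$ term) and an off-diagonal $p_2$-piece (yielding $y\Ai(y)\sigma_1$ after using $\Ai''(y)=y\Ai(y)$). The main obstacle, deferred to the appendix, is carefully matching constants and symmetries in this perturbative expansion of the Airy model; once that bookkeeping is done, comparing large-$z$ coefficients gives (\ref{g1g2g3explicit}).
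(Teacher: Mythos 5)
Your proposal is correct and follows essentially the same route as the paper: analyticity and boundedness are pulled back from the model solution $m^Y$ of Lemma \ref{YlemmaIV}, the jump estimate combines the bound (\ref{raat1IV}) with the phase decay $\re\Phi\leq -8|k|^3$ on the arms of $\mathcal{Y}^\epsilon$ and the rescaling $z=(3t)^{1/3}k$, the expansion (\ref{m0invexpansionIV}) and the residue identity (\ref{m0circleIV}) come from substituting $z=(3t)^{1/3}k$ into (\ref{mYasymptoticsIV}), and the explicit coefficients $g_1,g_2,g_3$ are read off from (\ref{m10Yexplicit}) and (\ref{mYexplicitIV}), whose Airy-function derivation is indeed carried out in the appendix exactly as you sketch.
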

\begin{proof}
The analyticity of $m_0$ and the bound (\ref{m0boundIV}) are a consequence of Lemma \ref{YlemmaIV}.
Moreover,
\begin{align}\label{v2minusv0IV}
& v^{(1)} - v_0 = \begin{cases}
   \begin{pmatrix}  
 0 & 0  \\
 (r_a(t,k) - p(t,z)) e^{t\Phi}   & 0 
  \end{pmatrix}, \qquad k \in \mathcal{Y}_1^\epsilon \cup \mathcal{Y}_2^\epsilon,
  	\\ 
 \begin{pmatrix}  
 0 & -(r_a^*(t,k) - p^*(t, z)) e^{-t\Phi} \\
0 & 0 
  \end{pmatrix}, \qquad k \in \mathcal{Y}_3^\epsilon \cup \mathcal{Y}_4^\epsilon.
\end{cases}
\end{align}
Since
$$\re \Phi(\zeta, re^{\frac{\pi i}{6}}) = -8 r^3 - \zeta r \leq -8r^3$$
for $\zeta \geq 0$ and $r \geq 0$, we obtain 
\begin{align}\label{PhionY1IV}
\re \Phi(\zeta, k) \leq -8|k|^3, \qquad k \in \mathcal{Y}_1^\epsilon, \ (x,t) \in \IV_\geq^T.
\end{align}
In particular,
\begin{align*}
e^{-\frac{3t}{4} |\re \Phi|}  \leq Ce^{-6t|k|^3} \leq C e^{-2|z|^3}, \qquad k \in \mathcal{Y}_1^\epsilon.
\end{align*}
Thus, by (\ref{v2minusv0IV}), (\ref{raat1IV}), and (\ref{pNdefIV}), 
\begin{align*}\nonumber
|v^{(1)} - v_0|
& \leq C|r_a(t,k) - p(t,z)| e^{t \re \Phi}
\leq C|zt^{-1/3}|^{N+1} e^{-\frac{3t}{4} |\re \Phi|} 
	\\
& \leq C|zt^{-1/3}|^{N+1} e^{-2|z|^3}, \qquad k \in \mathcal{Y}_1^\epsilon.
\end{align*}
Consequently, writing $r = |z|$,
\begin{align*}
& \|v^{(1)} - v_0\|_{L^\infty(\mathcal{Y}_1^{\epsilon})}
 \leq C\sup_{0 \leq r < \infty} (rt^{-1/3})^{N+1} e^{-2r^3}
\leq C t^{-(N+1)/3}
\end{align*}
and
\begin{align*}
& \|v^{(1)} - v_0\|_{L^1(\mathcal{Y}_1^{\epsilon})} 
 \leq C\int_0^{\infty} (rt^{-1/3})^{N+1} e^{-2r^3} \frac{dr}{t^{1/3}}
\leq C t^{-(N+2)/3}.
\end{align*}
Since similar estimates apply to $\mathcal{Y}_j^{\epsilon}$ with $j = 2,3,4$, this proves (\ref{v2v0estimateIV}).

We next apply Lemma \ref{YlemmaIV} to determine the asymptotics of the solution $m_0$. 
The variable $z = (3t)^{1/3}k$ satisfies $|z| = (3t)^{1/3}\epsilon$ if $|k| = \epsilon$. 
Thus equation (\ref{mYasymptoticsIV}) yields
\begin{align}\label{m0expansionmYIV}
  m_0(x,t,k) = &\; I + \sum_{j=1}^N \sum_{l=0}^N \frac{m_{jl}^Y(y)}{((3t)^{1/3}k)^j t^{l/3}}
 + O\big(t^{-\frac{N+1}{3}}\big)
\end{align}  
uniformly for $(x,t) \in \IV_\geq^T$ and $k \in \partial D_\epsilon(0)$.
It follows that the expansion (\ref{m0invexpansionIV}) exists with coefficients $m_{0,jl}(y)$ which can be expressed in terms of the $m_{jl}^Y(y)$; the first coefficient is given by 
$m_{0,10}(y) = - 3^{-1/3}m_{10}^Y(y)$.
Equation (\ref{m0invexpansionIV}) and Cauchy's formula yield
\begin{align}\label{intDepsm0inviV}
\frac{1}{2\pi i}\int_{\partial D_\epsilon(0)}(m_0^{-1} - I) dk
= &\; \sum_{l=0}^{N} \frac{m_{0,1l}(y)}{t^{\frac{l+1}{3}}} 
+ O\big(t^{-\frac{N+2}{3}}\big).
\end{align}
This proves (\ref{m0circleIV}). The expression (\ref{g1explicitIV}) follows from (\ref{m10Yexplicit}) because 
$$g_1(y) = m_{0,10}(y) = - 3^{-1/3}m_{10}^Y(y).$$
The estimate (\ref{m0LinftyestimateIV}) follows from (\ref{m0invexpansionIV}).

If $r(0) = 0$, then $m_{10}^Y(y) = 0$ and the explicit expressions in (\ref{g1g2g3explicit}) then follow from (\ref{sp1p2}), (\ref{m0invexpansionIV}), (\ref{m0expansionmYIV}), and (\ref{intDepsm0inviV}) by straightforward computations.
\end{proof}

\subsection{The solution $\hat{m}$} 
Let $\hat{\Gamma} := \Gamma^{(1)} \cup \partial D_\epsilon(0)$ and assume that the boundary of $D_\epsilon(0)$ is oriented counterclockwise, see Figure \ref{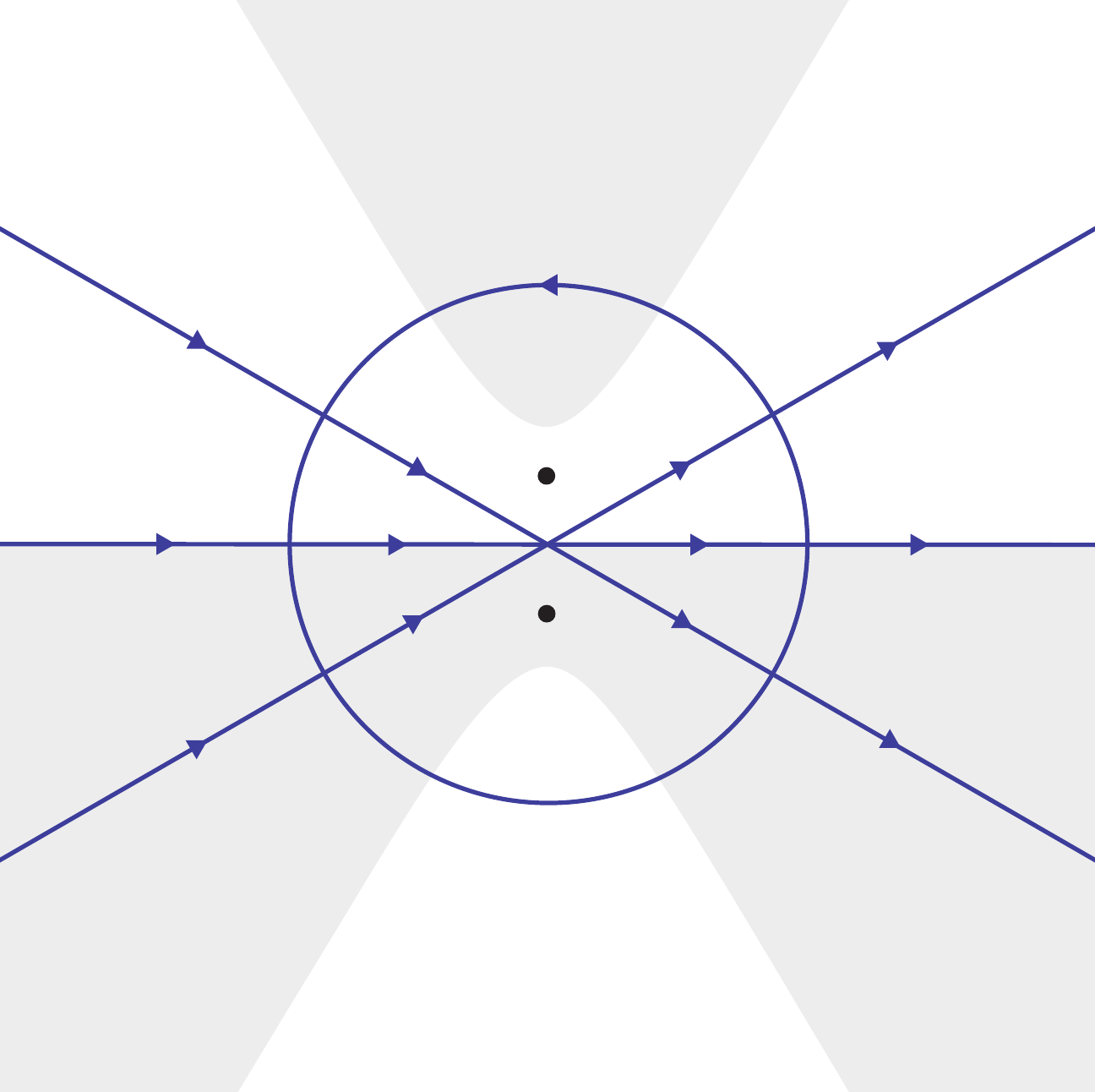}. 
The function $\hat{m}(x,t,k)$ defined by 
\begin{align}\label{mhatdef}
\hat{m}(x,t,k) = \begin{cases}
m^{(1)}(x, t, k)m_0(x,t,k)^{-1}, & k \in D_\epsilon(0),\\
m^{(1)}(x, t, k), & k \in \C \setminus D_\epsilon(0),
\end{cases}
\end{align}
satisfies the RH problem 
\begin{align}\label{RHmhat}
\begin{cases}
\hat{m}(x, t, \cdot) \in I + \dot{E}^2(\C \setminus \hat{\Gamma}),\\
\hat{m}_+(x,t,k) = \hat{m}_-(x, t, k) \hat{v}(x, t, k) \quad \text{for a.e.} \ k \in \hat{\Gamma},
\end{cases}
\end{align}
where the jump matrix $\hat{v}(x,t,k)$ is given by 
\begin{align}\label{vhatdef}
\hat{v} 
=  \begin{cases}
 m_{0-} v^{(1)} m_{0+}^{-1}, & k \in \hat{\Gamma} \cap D_\epsilon(0), \\
m_0^{-1}, & k \in \partial D_\epsilon(0), \\
v^{(1)},  & k \in \hat{\Gamma} \setminus \overline{D_\epsilon(0)}.
\end{cases}
\end{align}
We write $\hat{\Gamma}$ as the union of four subcontours as follows:
$$\hat{\Gamma} = \partial D_\epsilon(0) \cup \mathcal{Y}^\epsilon \cup \R \cup \hat{\Gamma}',$$
where $\hat{\Gamma}' := \Gamma^{(1)} \setminus (\R \cup \overline{D_\epsilon(0)})$.

\begin{figure}
\begin{center}
\begin{overpic}[width=.5\textwidth]{Gammahatgeq.pdf}
      \put(103,48){\small $\hat{\Gamma}$}
      \put(75,46.5){\small $\epsilon$}
      \put(19.5,46.5){\small $-\epsilon$}
    %  \put(49,45){\small $0$}
    \end{overpic}
     \begin{figuretext}\label{Gammahatgeq.pdf}
        The contour  $\hat{\Gamma}$ in the case of Sector $\IV_\geq$.
     \end{figuretext}
     \end{center}
\end{figure}

\begin{lemma}\label{wlemmaIV}
Let $\hat{w} = \hat{v} - I$. For each $1 \leq p \leq \infty$, the following estimates hold uniformly for $(x,t) \in \IV_\geq^T$:
\begin{subequations}\label{westimateIV}
\begin{align}\label{westimateIVa}
& \|\hat{w}\|_{L^p(\partial D_\epsilon(0))} \leq Ct^{-1/3},
	\\\label{westimateIVb}
& \|\hat{w}\|_{L^p(\mathcal{Y}^\epsilon)} \leq Ct^{-(N+1)/3}, 
	\\ \label{westimateIVc}
& \|\hat{w}\|_{L^p(\R)} \leq C t^{-N},  
	\\\label{westimateIVd}
& \|\hat{w}\|_{L^p(\hat{\Gamma}')} \leq Ce^{-ct}.
\end{align}
\end{subequations}
\end{lemma}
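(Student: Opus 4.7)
The plan is to handle the four pieces of $\hat{\Gamma}$ separately, in each case reading off $\hat{v}$ from (\ref{vhatdef}) and using the corresponding estimate that is already available. Throughout, only the $L^1$ and $L^\infty$ estimates need to be proved explicitly; the general $L^p$ bound will follow from the standard interpolation inequality $\|f\|_{L^p} \leq \|f\|_{L^1}^{1/p}\|f\|_{L^\infty}^{1-1/p}$.

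On $\partial D_\epsilon(0)$, we have $\hat{v} = m_0^{-1}$, so $\hat{w} = m_0^{-1} - I$, and the bound (\ref{westimateIVa}) is immediate from (\ref{m0LinftyestimateIV}) together with the fact that $\partial D_\epsilon(0)$ has finite length. For (\ref{westimateIVb}) on $\mathcal{Y}^\epsilon$, I would use the identity $m_{0+} = m_{0-} v_0$ to rewrite
\begin{align*}
\hat{w} = m_{0-} v^{(1)} m_{0+}^{-1} - I = m_{0-}\bigl(v^{(1)} - v_0\bigr) m_{0+}^{-1},
\end{align*}
and then combine the uniform bound $|m_0^{\pm 1}| \leq C$ from (\ref{m0boundIV}) with the approximation estimate (\ref{v2v0estimateIV}) from Lemma \ref{m0lemmaIV}.

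The estimate (\ref{westimateIVc}) on $\R$ uses $\hat{v} = v_3^{(1)}$ (conjugated by $m_0$ on $\R \cap D_\epsilon(0)$, where $m_{0+} = m_{0-}$ since $\mathcal{Y}^\epsilon \cap \R = \emptyset$). The entries of $v_3^{(1)} - I$ in (\ref{v1def}) are built from $r_r$, $r_r^*$, and $|r_r|^2$, and Lemma \ref{decompositionlemmageq}(c) gives $\|r_r\|_{L^1(\R)}, \|r_r\|_{L^\infty(\R)} = O(t^{-N})$. Combining this with the $L^\infty$ boundedness of $m_0^{\pm 1}$ on $\R \cap D_\epsilon(0)$ and the triviality of $m_0$ outside the disk yields (\ref{westimateIVc}).

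The remaining estimate (\ref{westimateIVd}) on $\hat{\Gamma}' = \Gamma^{(1)} \setminus (\R \cup \overline{D_\epsilon(0)})$ is where the genuine exponential decay enters. Here $\hat{v} = v^{(1)}$ with $v^{(1)} - I$ made of entries of the form $r_a e^{t\Phi}$ or $r_a^* e^{-t\Phi}$. From Lemma \ref{decompositionlemmageq}(b), $|r_a(t,k)| \leq C(1+|k|)^{-1} e^{(t/4)|\re\Phi|}$, so $|r_a e^{t\Phi}| \leq C(1+|k|)^{-1} e^{-(3t/4)|\re\Phi(\zeta,k)|}$ on $\hat\Gamma'\cap V$ (using $\re\Phi<0$ there). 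For $k = re^{\pi i/6}$ on this subcontour we have $r \geq \epsilon$ and $\re\Phi = -8r^3 - \zeta r \leq -8r^3$ by $\zeta\geq 0$, so $|\re\Phi|\geq 8\epsilon^3$ giving the pointwise bound $Ce^{-ct}$; the analogous statement on the other ray follows from the symmetry. For the $L^1$ norm the cubic factor $e^{-6tr^3}$ produces the convergent integral $\int_\epsilon^\infty e^{-6tr^3}dr \leq Ce^{-6t\epsilon^3}$, so the whole estimate is exponentially small, yielding (\ref{westimateIVd}).

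None of the steps are technically deep; the main thing to watch is the orchestration of the pieces. The trickiest point is probably (\ref{westimateIVb}), where one must remember that the small factor $t^{-(N+1)/3}$ comes from the order $N$ Taylor match (\ref{raat1IV}) between $r_a$ and $p$, passed through the Cauchy sandwich $m_{0-}(\cdot)m_{0+}^{-1}$, so the uniform boundedness of $m_0$ (including the boundary values from either side of $\mathcal{Y}^\epsilon$) is what ensures no loss of decay at this stage.
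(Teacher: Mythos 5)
Your proposal is correct and follows essentially the same route as the paper: the same four-way splitting of $\hat{\Gamma}$, the same identity $\hat{w} = m_{0-}(v^{(1)}-v_0)m_{0+}^{-1}$ on $\mathcal{Y}^\epsilon$ combined with (\ref{m0boundIV}) and (\ref{v2v0estimateIV}), the same appeal to Lemma \ref{decompositionlemmageq}(c) and the boundedness of $m_0$ on $\R$, and the same exponential suppression of $e^{-t|\re\Phi|}$ on $\hat{\Gamma}'$ (which you spell out in more detail than the paper does). No gaps.
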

\begin{proof}
The estimate (\ref{westimateIVa}) follows from (\ref{m0LinftyestimateIV}).
For $k \in \mathcal{Y}^\epsilon$, we have
$$\hat{w} = m_{0-} (v^{(1)} - v_0) m_{0+}^{-1},$$
so (\ref{m0boundIV}) and (\ref{v2v0estimateIV}) yield (\ref{westimateIVb}).
On $\R$, the jump matrix $v^{(1)}$ involves the small remainder $r_r$ (see the expression for $v_3^{(1)}$ in (\ref{v1def})), so the estimates (\ref{westimateIVc}) hold as a consequence of Lemma \ref{decompositionlemmageq} and (for the part of $\R$ that lies in $D_\epsilon(0)$) the boundedness (\ref{m0boundIV}) of $m_0$.
Finally, (\ref{westimateIVd}) follows because $e^{-t|\re \Phi|} \leq Ce^{-ct}$ uniformly on $\hat{\Gamma}'$. 
\end{proof}

The estimates in Lemma \ref{wlemmaIV} show that
\begin{align}\label{hatwLinftyIV}
\|\hat{w}\|_{(L^1 \cap L^\infty)(\hat{\Gamma})} \leq Ct^{-1/3},	 \qquad (x,t) \in \IV_\geq^T.
\end{align}
In particular, $\|\hat{w}\|_{L^\infty(\hat{\Gamma})} \to 0$ uniformly as $t \to \infty$. Thus, increasing $T$ if necessary, we may assume that 
$$\|\hat{\mathcal{C}}_{\hat{w}}\|_{\mathcal{B}(L^2(\hat{\Gamma}))} \leq C \|\hat{w}\|_{L^\infty(\hat{\Gamma})} \leq 1/2$$
for all $(x,t) \in \IV_\geq^T$, where $\hat{\mathcal{C}}_{\hat{w}}f := \hat{\mathcal{C}}_-(f\hat{w})$ and $\hat{\mathcal{C}}$ is the Cauchy operator associated with $\hat{\Gamma}$, see Section \ref{notationsubsec}. It follows from a standard argument that if $(x,t) \in \IV_\geq^T$, then the RH problem (\ref{RHmhat}) for $\hat{m}$ has a unique solution given by 
\begin{align}\label{mhatrepresentation}
\hat{m}(x, t, k) = I + \hat{\mathcal{C}}(\hat{\mu} \hat{w}) = I + \frac{1}{2\pi i}\int_{\hat{\Gamma}} (\hat{\mu} \hat{w})(x, t, s) \frac{ds}{s - k},
\end{align}
where $\hat{\mu}(x, t, k) \in I + L^2(\hat{\Gamma})$ is defined by $\hat{\mu} = I + (I - \hat{\mathcal{C}}_{\hat{w}})^{-1}\hat{\mathcal{C}}_{\hat{w}}I$.
Also, by (\ref{hatwLinftyIV}),
\begin{align}\label{muestimateIV}
\|\hat{\mu}(x,t,\cdot) - I\|_{L^2(\hat{\Gamma})} 
\leq  \frac{C\|\hat{w}\|_{L^2(\hat{\Gamma})}}{1 - \|\hat{\mathcal{C}}_{\hat{w}}\|_{\mathcal{B}(L^2(\hat{\Gamma}))}}\leq C t^{-1/3}, \qquad (x,t) \in \IV_\geq^T.
\end{align}

By (\ref{m0invexpansionIV}) and (\ref{westimateIV}), $\hat{w}$ has an expansion to order $O(t^{-(N+1)/3})$ in $t$ as $t \to \infty$, i.e.,
$$\hat{w}(x, t,k) = \frac{\hat{w}_1(y,k)}{t^{1/3}}+ \frac{\hat{w}_2(y,k)}{t^{2/3}} + \cdots + \frac{\hat{w}_N(y,k)}{t^{N/3}} + \frac{\hat{w}_{err}(x,t,k)}{t^{(N+1)/3}}, \quad (x,t) \in \IV_\geq^T, \ k \in \hat{\Gamma},$$
where the coefficients $\{\hat{w}_j\}_1^N$ are nonzero only for $k \in \partial D_\epsilon(0)$ and, for $1 \leq p \leq \infty$ and $ j = 1, \dots, N$,
\begin{align}\label{hatwerrestIV}
\begin{cases}
\|\hat{w}_j(y,\cdot)\|_{L^p(\hat{\Gamma})}  \leq C,\\
\|\hat{w}_{err}(x,t,\cdot)\|_{L^p(\hat{\Gamma})}  \leq C,
\end{cases} \qquad (x,t) \in \IV_\geq^T.
\end{align}
Since $\hat{\mu} = \sum_{j=0}^N \hat{\mathcal{C}}_{\hat{w}}^jI + (I-\hat{\mathcal{C}}_{\hat{w}})^{-1}\hat{\mathcal{C}}_{\hat{w}}^{N+1}I$ and
$$\hat{\mathcal{C}}_{\hat{w}} = \frac{\hat{\mathcal{C}}_{\hat{w}_1}}{t^{1/3}} + \frac{\hat{\mathcal{C}}_{\hat{w}_2}}{t^{2/3}} + \cdots + \frac{\hat{\mathcal{C}}_{\hat{w}_N}}{t^{N/3}} + \frac{\hat{\mathcal{C}}_{\hat{w}_{err}}}{t^{(N+1)/3}},$$
it follows that (see the explanation of (\ref{muYexpansionIV}) for more details of a similar argument)
$$\hat{\mu}(x, t,k) = I + \frac{\hat{\mu}_1(y,k)}{t^{1/3}} + \cdots
+ \frac{\hat{\mu}_N(y,k)}{t^{N/3}} + \frac{\hat{\mu}_{err}(x,t,k)}{t^{(N+1)/3}}, \qquad (x,t) \in \IV_\geq^T, \ k \in \hat{\Gamma},$$
where the coefficients $\{\hat{\mu}_j(y,k)\}_1^N$ depend smoothly on $y \in [0,\infty)$ and
\begin{align}\label{hatmuerrestIV}
\begin{cases}
 \|\hat{\mu}_j(y,\cdot)\|_{L^2(\hat{\Gamma})} \leq C, &
	\\ 
\|\hat{\mu}_{err}(x,t,\cdot)\|_{L^2(\hat{\Gamma})} \leq C, &
\end{cases} \ (x,t) \in \IV_\geq^T, \  j = 1, \dots, N.
\end{align}

\subsection{Asymptotics of $u$}
By expanding (\ref{mhatrepresentation}) and inverting the transformations (\ref{m1def}) and (\ref{mhatdef}), we find the relation
\begin{align}\label{limkm}
\lim_{k\to \infty} k(m(x,t,k)  - I)
= -\frac{1}{2\pi i}\int_{\hat{\Gamma}} \hat{\mu}(x, t, k) \hat{w}(x, t, k) dk.
\end{align}
We will use (\ref{limkm}) to compute the large $t$ asymptotics of $\lim_{k\to \infty} k(m - I)$; then the asymptotics of $u(x,t)$ will follow from (\ref{recoveru}).
From now on until the end of Section \ref{sectorIVgeqsec}, all equations involving error terms of the form $O(\cdot)$ will be valid uniformly for all $(x,t) \in \IV_\geq^T$.

By (\ref{m0circleIV}), (\ref{hatwerrestIV}), and (\ref{hatmuerrestIV}), the contribution from $\partial D_\epsilon(0)$ to the right-hand side of (\ref{limkm}) is
\begin{align*}
& -\frac{1}{2\pi i} \int_{\partial D_\epsilon(0)} \hat{w} dk	
-  \frac{1}{2\pi i} \int_{\partial D_\epsilon(0)} (\hat{\mu} - I) \hat{w} dk
= -\frac{1}{2\pi i}\int_{\partial D_\epsilon(0)} (m_0^{-1} - I) dk
	\\
& -  \frac{1}{2\pi i} \int_{\partial D_\epsilon(0)} \bigg(\sum_{j=1}^N \frac{\hat{\mu}_j(y,k)}{t^{j/3}} + \frac{\hat{\mu}_{err}(x,t,k)}{t^{(N+1)/3}}\bigg) 
 \bigg(\sum_{j=1}^N \frac{\hat{w}_j(y,k)}{t^{j/3}} + \frac{\hat{w}_{err}(x,t,k)}{t^{(N+1)/3}}\bigg)  dk
	\\
& =  - \sum_{j=1}^{N} \frac{h_j(y)}{t^{j/3}} + O\big(t^{-\frac{N+1}{3}}\big),
\end{align*}
where $\{h_j(y)\}_1^N$ are smooth matrix-valued functions of $y \in [0,\infty)$ and $h_1(y) = g_1(y)$.
By (\ref{westimateIVb}) and (\ref{muestimateIV}), the contribution from $\mathcal{Y}^\epsilon$ to the right-hand side of (\ref{limkm}) is
\begin{align*}
 -\frac{1}{2\pi i}\int_{\mathcal{Y}^\epsilon} \hat{\mu} \hat{w} dk
= O\big(\|\hat{w}\|_{L^1(\mathcal{Y}^\epsilon)}
& + \|\hat{\mu} - I\|_{L^2(\mathcal{Y}^\epsilon)}\|\hat{w}\|_{L^2(\mathcal{Y}^\epsilon)}\big)
  = O(t^{-(N+1)/3}).
\end{align*}
Similarly, by (\ref{westimateIVc}) and (\ref{muestimateIV}), the contribution from $\R$ is $O(t^{-N})$, and, by (\ref{westimateIVd}) and (\ref{muestimateIV}), the contribution from $\hat{\Gamma}'$ is $O(e^{- c t})$.
In summary, we arrive at
\begin{align}\label{limkmfinalIV}
\lim_{k\to \infty} k(m(x,t,k)  - I)
= - \sum_{j=1}^{N} \frac{h_j(y)}{t^{j/3}} + O\big(t^{-\frac{N+1}{3}}\big).
\end{align}
Recalling (\ref{recoveru}), this leads to the asymptotic formula
\begin{align*}
u(x,t) & = 2 \lim_{k\to \infty}k(m(x,t,k))_{21}
= \sum_{j=1}^{N} \frac{u_j(y)}{t^{j/3}}+ O(t^{-(N+1)/3}),
\end{align*}
where the $u_j(y)$ are smooth functions of $y \in [0,\infty)$ given by
\begin{align}\label{ujexpressionIV}
u_j(y) = -2 (h_j(y))_{21}, \qquad j = 1, \dots, N.
\end{align}
Using the expression (\ref{g1explicitIV}) for $h_1(y) = g_1(y)$, we find that $u_1(y)$ is given by (\ref{u1expression}). This completes the proof of Theorem \ref{mainth1} for Sector $\IV_\geq$.

Let us finally assume that $r(0) = 0$.
Then $\hat{w}_1 = \hat{\mu}_1 = 0$, and so $h_j(y) = g_j(y)$ for $j = 1,2,3$.
Substituting the formulas (\ref{g1g2g3explicit}) for $\{g_j\}_1^3$ into (\ref{ujexpressionIV}) gives $u_1 = 0$ and the expressions in (\ref{u2u3explicit}) for $u_2(y)$ and $u_3(y)$, thus completing the proof in Sector $\IV_\geq$ also for Theorem \ref{mainth2}.

\section{Asymptotics in Sector $\IV_\leq$}\label{sectorIVleqsec}
We now consider the asymptotics in the sector $\IV_\leq$ defined by
\begin{align*}
\IV_\leq = \{(x,t) \, | \, t\geq 1 \; \text{and} \; -M t^{1/3} \leq x \leq 0\},
\end{align*}
where $M > 0$ is a constant. In this sector, the critical points $\pm k_0 = \pm \sqrt{\frac{|x|}{12 t}}$ are real and approach $0$ at least as fast as $t^{-1/3}$ as $t \to \infty$, i.e., $|k_0| \leq C t^{-1/3}$.

\subsection{The solution $m^{(1)}$}
As in Section \ref{sectorIVgeqsec}, we begin by decomposing $r$ into an analytic part $r_a$ and a small remainder $r_r$. This time we define the contour $\Gamma^{(1)} \equiv \Gamma^{(1)}(\zeta)$ and the open subsets $V \equiv V(\zeta)$ and $V^* \equiv V^*(\zeta)$ as in Figure \ref{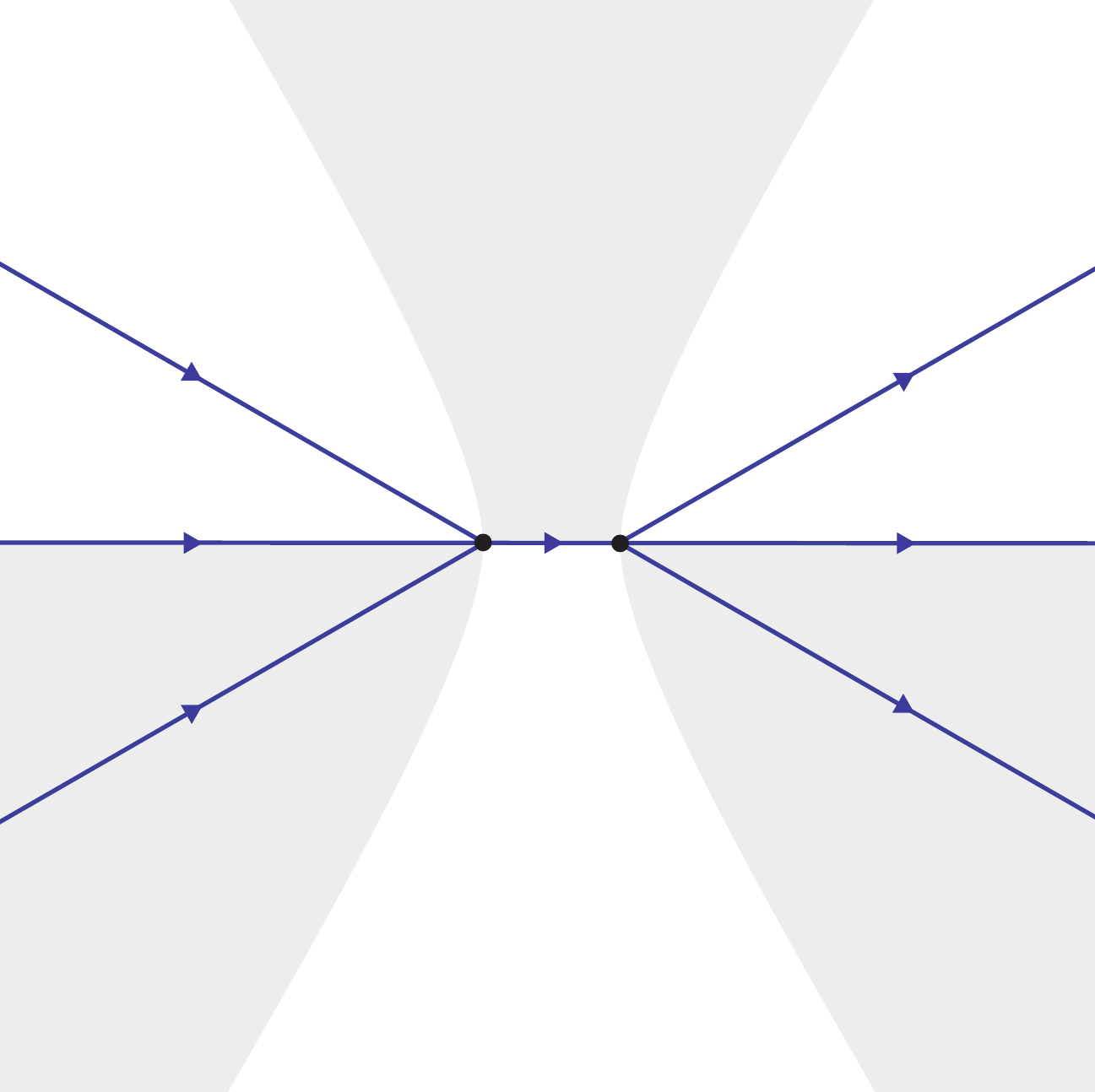}.
We still assume that $N \geq 2$ is an integer. We let $A > 0$ be a constant.

\begin{figure}
\begin{center}
\begin{overpic}[width=.5\textwidth]{Gamma1leq.pdf}
      \put(103,48.5){\small $\Gamma^{(1)}$}
      \put(80,67){\small $1$}
      \put(18,67){\small $1$}
      \put(81,30){\small $2$}
      \put(18,30){\small $2$}
      \put(17,45.5){\small $3$}
      \put(81.5,45.5){\small $3$}
      \put(49,52){\small $4$}
      \put(55,45.5){\small $k_0$}
      \put(40,45.5){\small $-k_0$}
      \put(8,57){\small $V$}
      \put(8,39){\small $V^*$}
      \put(89,57){\small $V$}
      \put(89,39){\small $V^*$}
      \put(41,84){\small $\re \Phi > 0$}
    \end{overpic}
    \vspace{.5cm}
     \begin{figuretext}\label{Gamma1leq.pdf}
        The contour  $\Gamma^{(1)}$ and the sets $V$ and $V^*$ in the case of Sector $\IV_\leq$. The region where $\re \Phi > 0$ is shaded. 
     \end{figuretext}
     \end{center}
\end{figure}

\begin{lemma}[Analytic approximation for $-A \leq \zeta \leq 0$]\label{decompositionlemmaleq}
There exists a decomposition
\begin{align*}
& r(k) = r_{a}(x, t, k) + r_{r}(x, t, k), \qquad k \in (-\infty, -k_0)\cup (k_0, \infty),
\end{align*}
where the functions $r_{a}$ and $r_{r}$ have the following properties:
\begin{enumerate}[$(a)$]
\item For each $\zeta \in [-A, 0]$ and $t\geq1$, $r_{a}(x, t, k)$ is defined and continuous for $k \in \bar{V}$ and analytic for $k \in V$.

\item The function $r_{a}$ obeys the following estimates uniformly for $\zeta \in [-A, 0]$ and $t \geq 1$:
\begin{align*}
& |r_{a}(x, t, k)| \leq \frac{C}{1 + |k|} e^{\frac{t}{4}|\re \Phi(\zeta,k)|}, \qquad
  k \in \bar{V}, 
\end{align*}
and
\begin{align}\label{raatk0}
\bigg|r_{a}(x, t, k) - \sum_{j=0}^N \frac{r^{(j)}(k_0)}{j!} (k-k_0)^j\bigg| \leq C |k-k_0|^{N+1} e^{\frac{t}{4}|\re \Phi(\zeta,k)|}, \qquad k \in \bar{V}.
\end{align}

\item The $L^1$ and $L^\infty$ norms of $r_{r}(x, t, \cdot)$ on $(-\infty, -k_0)\cup (k_0, \infty)$ are $O(t^{-N})$ as $t \to \infty$ uniformly with respect to $\zeta \in [-A, 0]$.

\item $r_{a}(x, t, k) = -r_{a}^*(x, t, -k)$ for $k \in \bar{V}$ and $r_{r}(x, t, k) = -r_{r}^*(x, t, -k)$ for $k \in (-\infty, -k_0)\cup (k_0, \infty)$.

\end{enumerate}
\end{lemma}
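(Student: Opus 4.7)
The plan is to adapt the proof of Lemma \ref{decompositionlemmageq} to accommodate the fact that the Taylor expansion is now centered at the real point $k_0 = \sqrt{|\zeta|/12}$ rather than at the origin, and that $V$ is bounded by rays emanating from $\pm k_0$ rather than from $0$. Since $0\leq k_0 \leq Ct^{-1/3}$ throughout $\IV_\leq$, the present setting is a genuine perturbation of the $\zeta=0$ case, but one must verify that all bounds remain uniform down to $\zeta=0$, where the two critical points $\pm k_0$ coalesce at the origin.

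I would first produce a rational function
\begin{align*}
r_0(x,t,k) = \frac{P(x,t,k)}{(k^2+1)^{3N+4}}
\end{align*}
whose Taylor expansions at $\pm k_0$ agree with those of $r$ up to order $4N+3$, where $P$ is a polynomial in $k$ whose coefficients are determined by Hermite interpolation. The symmetry $r(k)=-\overline{r(-k)}$ forces the data at $-k_0$ to be determined by the data at $k_0$ and can be enforced at the level of the coefficients of $P$ (real coefficients on odd powers, imaginary on even) so that $r_0 = -r_0^*(-\,\cdot\,)$. The difference $f := r - r_0$ is then smooth on $\R$, obeys $f = -f^*(-\,\cdot\,)$, vanishes to order $4N+4$ at each of $\pm k_0$, and decays like $O(k^{-2N-5})$ at infinity.

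The second step is a Fourier decomposition of $f$ along the lines of Lemma \ref{decompositionlemmageq}, using the same $\zeta$-independent change of variable $\phi = 8k^3 = -i\Phi(0,k)$. Writing
\begin{align*}
f(x,t,k) = \frac{(k-k_0)^{N+1}(k+k_0)^{N+1}}{(k^2+1)^{N+2}}\, F(x,t,\phi),
\end{align*}
one checks that the extra vanishing of $f$ at $\pm k_0$ and its decay at infinity yield $F(x,t,\cdot)\in H^{N+1}(\R)$ with a norm bounded uniformly in $\zeta\in[-A,0]$ and $t\geq 1$. Taking the Fourier transform $\hat F$ and splitting the inversion formula at $s=-t/4$, I set
\begin{align*}
f_a(x,t,k) &= \frac{(k-k_0)^{N+1}(k+k_0)^{N+1}}{(k^2+1)^{N+2}}\int_{-t/4}^{\infty}\hat F(x,t,s)\,e^{s\Phi(0,k)}\,ds,\\
f_r(x,t,k) &= \frac{(k-k_0)^{N+1}(k+k_0)^{N+1}}{(k^2+1)^{N+2}}\int_{-\infty}^{-t/4}\hat F(x,t,s)\,e^{s\Phi(0,k)}\,ds.
\end{align*}
Provided the rays of $\Gamma^{(1)}$ at $\pm k_0$ are opened at an angle close to $\pi/3$, one still has $\re\Phi(\zeta,k)\leq\re\Phi(0,k)\leq 0$ throughout $\bar V$ uniformly in $\zeta\in[-A,0]$, so the Fubini-and-analytic-continuation argument from Lemma \ref{decompositionlemmageq} shows $f_a(x,t,\cdot)$ is analytic in $V$ and obeys the sup bound in (b); the refined estimate (\ref{raatk0}) at $k_0$ then follows from the vanishing of the prefactor at $k_0$ combined with the matching of the Taylor data by $r_0$. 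The Cauchy--Schwarz estimate on the $s<-t/4$ integral yields $\|f_r\|_{(L^1\cap L^\infty)((-\infty,-k_0)\cup(k_0,\infty))} = O(t^{-N-1/2})$, giving (c). Setting $r_a := r_0 + f_a$ and $r_r := f_r$, the symmetry (d) follows from $F(\phi) = \overline{F(-\phi)}$, which in turn follows from the parity of $N$ together with the symmetries of $f$ and $\phi$.

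The main obstacle I expect is the uniformity of all constants as $\zeta\to 0^-$, the regime in which the two interpolation nodes $\pm k_0$ collide at $0$; the Hermite-interpolation problem becomes confluent, and the prefactor $(k-k_0)^{N+1}(k+k_0)^{N+1}$ degenerates to $k^{2N+2}$. I must verify that the coefficients of $P$ remain bounded through this collision (a classical divided-difference estimate, which works because the interpolation data comes from the smooth Schwartz function $r$) and that the $H^{N+1}$-norm of $F$ also stays bounded in the limit. A secondary, geometric point is the verification that the opening angle of the rays at $\pm k_0$ can indeed be chosen to enforce $\re\Phi(\zeta,k)\leq\re\Phi(0,k)\leq 0$ on $\bar V$ uniformly in $\zeta\in[-A,0]$; this fixes the precise shape of $\Gamma^{(1)}$ in Figure \ref{Gamma1leq.pdf} and is routine once the correct angle is identified.
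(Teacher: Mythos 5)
Your approach reproduces the mechanism of Lemma \ref{decompositionlemmageq}, but two of its concrete steps fail in Sector $\IV_\leq$, and the obstacle is not the confluence of $\pm k_0$ that you single out. First, the inequality $\re \Phi(\zeta,k) \leq \re \Phi(0,k) \leq 0$ on $\bar{V}$ is reversed here: since $\re \Phi(\zeta,k) = \re \Phi(0,k) - 2\zeta \im k$, and now $\zeta \leq 0$ while $V$ lies in the upper half-plane, one has $\re \Phi(\zeta,k) \geq \re \Phi(0,k)$, i.e.\ $|\re \Phi(\zeta,k)| \leq |\re \Phi(0,k)|$. (The inequality you quote is the one from the proof of Lemma \ref{decompositionlemmageq}, where $\zeta \geq 0$.) Splitting the Fourier integral at $s=-t/4$ against $e^{s\Phi(0,k)}$ therefore only yields $|f_a| \leq C e^{\frac{t}{4}|\re \Phi(0,k)|}$, which is \emph{weaker} than the bound asserted in part (b); it does not prove the lemma as stated, and one would have to re-examine whether the surplus factor $e^{\frac{t}{4}(|\re\Phi(0,k)|-|\re\Phi(\zeta,k)|)} = e^{6tk_0^2\im k}$ is still absorbed by the decay used later in the proof of Lemma \ref{m0lemmaIVg}.

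Second, and more seriously, the $\zeta$-independent change of variable $\phi = 8k^3$ is degenerate at the \emph{interior} point $k=0$: $d\phi/dk = 24k^2$ vanishes there, so $\partial_\phi^n F = \big(\tfrac{1}{24k^2}\partial_k\big)^n[\cdots]$ blows up at $k=0$ unless the bracket vanishes to high order there. In Lemma \ref{decompositionlemmageq} this is harmless because $f$ vanishes to order $4N+4$ at $k=0$; in your construction $f$ vanishes only at $\pm k_0 \neq 0$, so your $F$ is generically not even $C^1$ in $\phi$ and the Sobolev/Plancherel step collapses. The paper's proof sidesteps both problems simultaneously: it works on the right half $V'$ only and extends to the left half by the symmetry (\ref{rsymm}) (so no two-point Hermite interpolation and no confluence issue arises), it Taylor-expands $\rho(k)=(k-i)^{4N+7}r(k)$ around $k_0$ alone, and it uses the $\zeta$-dependent increasing bijection $\phi = -i\Phi(\zeta,k) = 2\zeta k + 8k^3$ of $[k_0,\infty)$ onto $[\phi(\zeta,k_0),\infty)$ (extended by zero below $\phi(\zeta,k_0)$), whose derivative $24(k^2-k_0^2)$ vanishes exactly at the endpoint $k_0$ where $f$ has its high-order zero; this also makes $e^{s\Phi(\zeta,k)}$, rather than $e^{s\Phi(0,k)}$, appear in the split integrals, which is what produces the correct exponent $e^{\frac{t}{4}|\re\Phi(\zeta,k)|}$ in (b) and (\ref{raatk0}). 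If you wish to retain a symmetric two-point construction, you must still replace $\phi=8k^3$ by a phase adapted to $\zeta$ and restrict to a half-line on which it is a diffeomorphism away from the interpolation node.
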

\begin{proof}
We write $V = V' \cup V''$, where $V'$ and $V''$ denote the parts of $V$ in the right and left half-planes, respectively. We will derive a decomposition of $r$ in $V'$ and then use the symmetry (\ref{rsymm}) to extend it to $V''$. 

Taylor expansion of $\rho(k) := (k-i)^{4N+7} r(k)$ around $k_0$ gives
$$(k-i)^{4N+7} r(k) = \sum_{j=0}^{4N+3} \frac{\rho^{(j)}(k_0)}{j!}(k-k_0)^j + \frac{1}{(4N+3)!} \int_{k_0}^k \rho^{(4N+4)}(u) (k-u)^{4N+3} du.$$
Thus the function
$$f(\zeta,k) := r(k) - r_0(\zeta,k)
= \frac{1}{(4N+3)!(k-i)^{4N+7}}  \int_{k_0}^k \rho^{(4N+4)}(u) (k-u)^{4N+3} du,$$
where
$$r_0(\zeta, k) := \sum_{j=0}^{4N+3} \frac{\rho^{(j)}(k_0)}{j!}\frac{(k-k_0)^j}{(k-i)^{4N+7}},$$
satisfies the following estimates uniformly for $\zeta \in [-A, 0]$:
\begin{align}\label{fcoincideIVg}
 \frac{\partial^n f}{\partial k^n} (\zeta, k) =
\begin{cases}
O((k-k_0)^{4N+4-n}), \quad& k \to k_0, 
	\\
O(k^{-4}), &  k \to \infty, 
 \end{cases}
 \  n = 0,1,\dots, N+1.
\end{align} 

The decomposition of $r(k)$ can now be derived as follows.
For each  $\zeta \in [-A, 0]$, the map $k \mapsto \phi \equiv \phi(\zeta, k)$ where
\begin{align*}
  \phi = -i\Phi(\zeta, k) = 2\zeta k + 8k^3
\end{align*}  
is an increasing bijection  $[k_0, \infty) \to [\phi(\zeta, k_0), \infty)$.
%$$\phi(\zeta, k_0) = -\frac{(k_0^2 - 1)^3}{4k_0^2(k_0^2 +1)},$$
Hence we may define a function $F(\zeta, \phi)$ by
\begin{align}\label{FdefIVg}
F(\zeta, \phi) = \begin{cases} \frac{(k-i)^{N+3}}{(k-k_0)^{N+1}} f(\zeta, k), &  \phi \geq \phi(\zeta, k_0), \\
0, & \phi < \phi(\zeta, k_0),
\end{cases} \qquad \zeta \in [-A, 0], \ \phi \in \R.
\end{align}
For each $\zeta \in [-A, 0]$, the function $F(\zeta, \phi)$ is smooth for $\phi \neq \phi(\zeta,k_0)$ and 
\begin{align}\label{dnFdphinIVg}
\frac{\partial^n F}{\partial \phi^n}(\zeta, \phi) = \bigg(\frac{1}{\partial \phi/\partial k} \frac{\partial }{\partial k}\bigg)^n 
\bigg[\frac{(k-i)^{N+3}}{(k-k_0)^{N+1}} f(\zeta, k)\bigg], \qquad \phi \geq \phi(\zeta,k_0),
\end{align}
where
$$\frac{\partial \phi}{\partial k} = 24(k^2 -k_0^2).$$
By (\ref{fcoincideIVg}) and (\ref{dnFdphinIVg}), we have $F(\zeta, \cdot) \in C^N(\R)$ for each $\zeta$ and 
\begin{align*}
\bigg| \frac{\partial^n F}{\partial \phi^n}(\zeta, \phi)\bigg| \leq 
 \frac{C}{1 + |\phi|^{2/3}}, \qquad \phi \in (\phi(\zeta, k_0), \infty), \ \zeta \in [-A, 0], \ n = 0,1, \dots, N+1.
\end{align*}

Hence
\begin{align*}
\sup_{\zeta \in [-A, 0]} \big\|\partial_\phi^n F(\zeta, \cdot)\big\|_{L^2(\R)} < \infty, \qquad n = 0,1, \dots, N+1.
\end{align*}
In particular, $F(\zeta, \cdot)$ belongs to the Sobolev space $H^{N+1}(\R)$ for each $\zeta \in [-A,0]$.
We conclude that the Fourier transform $\hat{F}(\zeta, s)$ defined by (\ref{FhatdefIV}) satisfies (\ref{FFhatIV}) and
\begin{align}\label{x2FhatIVg}
\sup_{\zeta \in [-A,0]} \|s^{N+1} \hat{F}(\zeta, s)\|_{L^2(\R)} < \infty.
\end{align}
Equations (\ref{FFhatIV}) and (\ref{FdefIVg}) imply
$$ \frac{(k-k_0)^{N+1}}{(k-i)^{N+3}}\int_{\R} \hat{F}(\zeta, s) e^{s\Phi(\zeta,k)} ds 
= \begin{cases} f(\zeta, k), \quad &  k \geq k_0, \\
0, & k < k_0, 
 \end{cases} \ \zeta \in [-A,0].$$
We write
$$f(\zeta, k) = f_a(x, t, k) + f_r(x, t, k), \qquad \zeta \in [-A,0], \ t \geq 1, \ k \geq k_0,$$
where the functions $f_a$ and $f_r$ are defined by
\begin{align*}
& f_a(x,t,k) = \frac{(k-k_0)^{N+1}}{(k-i)^{N+3}}\int_{-\frac{t}{4}}^{\infty} \hat{F}(\zeta,s) e^{s\Phi(\zeta,k)} ds, \qquad k \in \bar{V}',  
	\\
& f_r(x,t,k) = \frac{(k-k_0)^{N+1}}{(k-i)^{N+3}}\int_{-\infty}^{-\frac{t}{4}} \hat{F}(\zeta,s) e^{s\Phi(\zeta,k)} ds,\qquad k \geq k_0.
\end{align*}
Since $\re \Phi(\zeta, k) \leq 0$ for $k \in \bar{V}'$, $f_a(x, t, \cdot)$ is continuous in $\bar{V}'$ and analytic in $V'$. 
Furthermore,
\begin{align*}\nonumber
 |f_a(x, t, k)| 
&\leq \frac{|k-k_0|^{N+1}}{|k-i|^{N+3}} \|\hat{F}(\zeta,\cdot)\|_{L^1(\R)}  \sup_{s \geq -\frac{t}{4}} e^{s \re \Phi(\zeta,k)}
\leq \frac{C|k-k_0|^{N+1}}{|k-i|^{N+3}}  e^{\frac{t}{4} |\re \Phi(\zeta,k)|} 
	\\ 
& \hspace{5cm} \zeta \in [-A,0], \ t \geq 1, \ k \in \bar{V}',
\end{align*}
and
\begin{align*}\nonumber
|f_r(x, t, k)| & \leq \frac{|k-k_0|^{N+1}}{|k-i|^{N+3}} \int_{-\infty}^{-\frac{t}{4}} s^{N+1} |\hat{F}(\zeta,s)| s^{-N-1} ds
	\\\nonumber
& \leq \frac{C}{1 + |k|^2}  \| s^{N+1} \hat{F}(\zeta,s)\|_{L^2(\R)} \sqrt{\int_{-\infty}^{-\frac{t}{4}} |s|^{-2N-2} ds}  
 	\\ 
&  \leq \frac{C}{1 + |k|^2} t^{-N-1/2}, \qquad \zeta \in [-A,0], \ t \geq 1, \ k \geq k_0.
\end{align*}
Letting
\begin{align*}
& r_{a}(x, t, k) = r_0(\zeta, k) + f_a(x, t, k), \qquad k \in \bar{V}',
	\\
& r_{r}(x, t, k) = f_r(x, t, k), \qquad k \geq k_0.
\end{align*}
we find a decomposition of $r$ for $k \in (k_0,\infty)$ with the asserted properties. We use the symmetry (\ref{rsymm}) to extend this decomposition to $k \in (-\infty, -k_0)$.
\end{proof}

Using a decomposition of $r$ as provided by Lemma \ref{decompositionlemmaleq}, we define $m^{(1)}$ by (\ref{m1def}) with $G(x,t,k)$ given by (\ref{GdefII}). By Lemma \ref{decompositionlemmaleq},
$$G(x,t,\cdot) \in I + (\dot{E}^2 \cap E^\infty)(V \cup V^*),$$
hence $m$ satisfies the RH problem (\ref{RHm}) if and only if $m^{(1)}$ satisfies the RH problem (\ref{RHm1}), where the jump matrix $v^{(1)}$ is given by (\ref{v1def}) and
$$v_4^{(1)} = \begin{pmatrix}  
 1 - |r|^2 & - r^* e^{-t\Phi}  \\
 r e^{t\Phi}  & 1
\end{pmatrix}$$
with subscripts referring to Figure \ref{Gamma1leq.pdf}.

\subsection{Local model}
As in Section \ref{sectorIVgeqsec}, we introduce the new variables $y$ and $z$ by (\ref{yzdefIV}). We now have $-C \leq y \leq 0$.
Let $\mathcal{Z}^\epsilon = (\Gamma^{(1)} \cap D_\epsilon(0))\setminus ((-\infty,-k_0) \cup (k_0, \infty))$. The map $k \mapsto z$ takes $\mathcal{Z}^\epsilon$ onto $Z \cap \{|z| < (3t)^{1/3}\epsilon\}$, where $Z$ is the contour defined in (\ref{ZdefIVg}) with $z_0 := (3t)^{1/3}k_0 = \sqrt{|y|}/2$.
We write $\mathcal{Z}^\epsilon = \cup_{j=1}^5\mathcal{Z}_j^\epsilon$, where $\mathcal{Z}_j^\epsilon$ denotes the part of $\mathcal{Z}^\epsilon$ that maps into $Z_j$, see Figure \ref{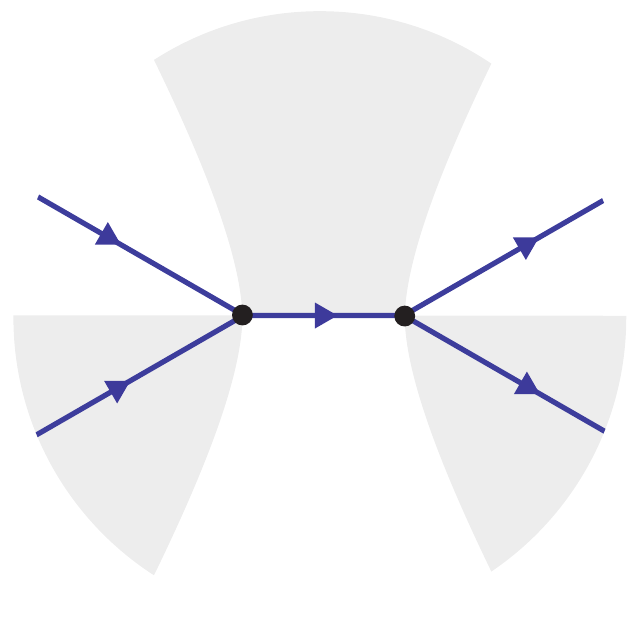}.

\begin{figure}
\begin{center}
 \begin{overpic}[width=.4\textwidth]{calZleq.pdf}
 \put(77,65.5){\small $\mathcal{Z}_1^\epsilon$}
 \put(18,65){\small $\mathcal{Z}_2^\epsilon$}
 \put(18,32){\small $\mathcal{Z}_3^\epsilon$}
 \put(77,32){\small $\mathcal{Z}_4^\epsilon$}
 \put(47.5,54){\small $\mathcal{Z}_5^\epsilon$}
 \put(61,44){\small $k_0$}
 \put(33,44){\small $-k_0$}
 \end{overpic}
   \begin{figuretext}\label{calZleq.pdf}
      The contour $\mathcal{Z}^\epsilon = \cup_{j=1}^5\mathcal{Z}_j^\epsilon$.
      \end{figuretext}
   \end{center}
\end{figure}

The long-time asymptotics in Sector $\IV_\leq$ is related to the solution $m^Z(y,t,z)$ of the RH problem (\ref{RHmZIVg}) with $p$ given by (\ref{pNdefIV}).
Let $\IV_\leq^T := \IV_\leq \cap \{t \geq T\}$. The triple $(y,t,z_0)$ belongs to the parameter set $\mathcal{P}_T$ in (\ref{parametersetIVg}) whenever $(x,t) \in \IV_\leq^T$.
Thus, by Lemma \ref{ZlemmaIVg}, we can choose $T \geq 1$ such that
\begin{align}\label{m0defIVg}
m_0(x, t, k) := m^Z(y, t, z_0, z), \qquad k \in D_\epsilon(0),
\end{align}
is well-defined whenever $(x,t) \in \IV_\leq^T$. 
By (\ref{mZsymmIV}), $m_0$ obeys the same symmetries (\ref{msymm}) as $m$.

\begin{lemma}\label{m0lemmaIVg}
For each $(x,t) \in \IV_\leq^T$, the function $m_0(x,t,k)$ defined in (\ref{m0defIVg}) is an analytic function of $k \in D_\epsilon(0) \setminus \mathcal{Z}^\epsilon$ such that
\begin{align}\label{m0boundIVg}
|m_0(x,t,k)| \leq C, \qquad  (x,t) \in \IV_\leq^T, \ k \in D_\epsilon(0) \setminus \mathcal{Z}^\epsilon.
\end{align}
Across $\mathcal{Z}^\epsilon$, $m_0$ obeys the jump condition $m_{0+} =  m_{0-} v_0$, where the jump matrix $v_0$ satisfies, for each $1 \leq p \leq \infty$,
\begin{align}\label{v2v0estimateIVg}
 \|v^{(1)} - v_0\|_{L^p(\mathcal{Z}^\epsilon)} \leq Ct^{-\frac{N+1}{3}}, \qquad (x,t) \in \IV_\leq^T.
\end{align}	
Furthermore, 
\begin{align*}
 m_0(x,t,k)^{-1}  = I +  \sum_{j=1}^N \sum_{l=0}^N \frac{m_{0,jl}(y)}{k^j t^{(j+l)/3}} +  O(t^{-(N+1)/3})
\end{align*}  
uniformly for $(x,t) \in \IV_\leq^T$ and $k \in \partial D_\epsilon(0)$, where the coefficients $m_{0,jl}(y)$ are smooth extensions to all $y \in \R$ of the coefficients in (\ref{m0invexpansionIV}).
The function $m_0(x,t,k)$ satisfies (\ref{m0LinftyestimateIV})-(\ref{m0circleIV}) uniformly for $(x,t) \in \IV_\leq^T$, where the coefficients $\{g_j(y)\}_1^N$ are smooth functions of $y\in \R$ such that (\ref{g1explicitIV})-(\ref{g1g2g3explicit}) hold.
\end{lemma}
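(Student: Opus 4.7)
My plan is to follow the architecture of the proof of Lemma \ref{m0lemmaIV}, substituting the local model $m^Z$ for $m^Y$ so as to accommodate the two real critical points $\pm k_0$ and the extra contour piece $\mathcal{Z}_5^\epsilon = (-k_0, k_0)\subset\R$ between them. Since the discussion preceding the lemma already verifies $(y,t,z_0) \in \mathcal{P}_T$ for every $(x,t) \in \IV_\leq^T$, the analyticity of $m_0$ on $D_\epsilon(0)\setminus\mathcal{Z}^\epsilon$ and the uniform bound (\ref{m0boundIVg}) are direct consequences of Lemma \ref{ZlemmaIVg}. The jump $v_0 = m_{0-}^{-1} m_{0+}$ is, by construction, the pullback of the $m^Z$-jump under the map $k \mapsto z = (3t)^{1/3}k$.

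For the jump estimate (\ref{v2v0estimateIVg}) I would work piece-by-piece on the five components $\mathcal{Z}_j^\epsilon$. On each off-axis component, $v^{(1)} - v_0$ has a single nonzero off-diagonal entry of the form $(r_a - p)e^{\pm t\Phi}$. Unlike in the $\IV_\geq$ case, estimate (\ref{raatk0}) is an expansion about $k_0$, while $p(t,z)$ from (\ref{pNdefIV}) is the Taylor polynomial of $r$ about $0$; I would therefore split
\begin{align*}
r_a - p = \Big(r_a - \sum_{j=0}^N \tfrac{r^{(j)}(k_0)}{j!}(k-k_0)^j\Big) + \Big(\sum_{j=0}^N \tfrac{r^{(j)}(k_0)}{j!}(k-k_0)^j - \sum_{j=0}^N \tfrac{r^{(j)}(0)}{j!}k^j\Big),
\end{align*}
bound the first parenthesis by (\ref{raatk0}) and the second by a further Taylor expansion of each $r^{(j)}(k_0)$ about $0$. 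Since $|k_0|\leq Ct^{-1/3}$, both pieces are controlled by $C(|k|+t^{-1/3})^{N+1}$ (with the extra weight $e^{\frac{t}{4}|\re\Phi|}$ on the first from (\ref{raatk0})). Combined with the decay of $e^{t\Phi}$ along the off-axis rays of $\Gamma^{(1)}$ and the scaling $z=(3t)^{1/3}k$, the change-of-variable computation from Lemma \ref{m0lemmaIV} then reproduces $\|v^{(1)}-v_0\|_{L^p(\mathcal{Z}_j^\epsilon)}\leq Ct^{-(N+1)/3}$ for $j=1,\dots,4$. On $\mathcal{Z}_5^\epsilon\subset\R$ the phase is purely imaginary and supplies no decay, but the pointwise bound $|r_a-p|\leq C(k_0+|k|)^{N+1}=O(t^{-(N+1)/3})$ together with $|\mathcal{Z}_5^\epsilon|=2k_0=O(t^{-1/3})$ yields the required $L^p$ bounds for all $p$ by direct interpolation.

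The large-$k$ expansion of $m_0(x,t,k)^{-1}$ on $\partial D_\epsilon(0)$, the $L^\infty$-bound (\ref{m0LinftyestimateIV}), the integral identity (\ref{m0circleIV}), and the explicit formulas (\ref{g1explicitIV})-(\ref{g1g2g3explicit}) are obtained by transporting the large-$z$ expansion of $m^Z$ supplied by Lemma \ref{ZlemmaIVg} through $z=(3t)^{1/3}k$ and applying Cauchy's theorem on $\partial D_\epsilon(0)$, exactly as in (\ref{m0expansionmYIV})-(\ref{intDepsm0inviV}). Smoothness in $y\in\R$ of the coefficients $m_{0,jl}(y)$ is inherited from the smooth dependence of $m^Z$ on the parameter $z_0=\sqrt{|y|}/2$ built into Lemma \ref{ZlemmaIVg}. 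The explicit formulas for $g_1, g_2, g_3$ depend on $r$ only through the universal constants $s, p_1, p_2$ of (\ref{sp1p2}), so the $\IV_\leq$ expressions match their $\IV_\geq$ counterparts at $y=0$ and together define one smooth function on all of $\R$.

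The main obstacle I anticipate is the smooth matching at $y=0$, where the two critical points $\pm k_0$ coalesce and $m^Z$ passes to a confluent regime; ensuring that each $m_{0,jl}$ (hence each $g_j$) extends smoothly through $y=0$ reduces to a statement about the joint smoothness of $m^Z(y,t,z_0,z)$ in $(y,z_0)$ as $z_0\to 0^+$, which should be part of the content of Lemma \ref{ZlemmaIVg} but is worth verifying explicitly. A secondary technical hurdle is the Taylor re-expansion on $\mathcal{Z}_5^\epsilon$: both the arc-length and the pointwise bound scale with negative powers of $t$, so one must combine the exponents correctly for each $p$, which is elementary but easy to misroute.
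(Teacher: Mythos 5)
Your proposal is correct and follows essentially the same route as the paper: analyticity and boundedness from Lemma \ref{ZlemmaIVg}, the splitting of $r_a-p$ into the remainder controlled by (\ref{raatk0}) plus a Taylor re-expansion of the $k_0$-centered polynomial about $0$ (this is exactly the paper's computation (\ref{sumsumdifference})), separate treatment of the oscillatory segment between $\pm k_0$, and transport of the $m^Z$-expansion through $z=(3t)^{1/3}k$. One small correction: on $\mathcal{Z}_5^\epsilon$ the jump is $v_4^{(1)}$, which involves $r$ itself rather than $r_a$ (the decomposition of Lemma \ref{decompositionlemmaleq} is only defined for $|k|>k_0$), so the pointwise bound there should read $|r(k)-p(t,z)|\leq C|k|^{N+1}\leq Ct^{-(N+1)/3}$; the conclusion is unchanged. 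Your concern about smoothness of the coefficients across $y=0$ is well placed but is indeed resolved by Lemma \ref{ZlemmaIVg}(b), which asserts that the $m_{jl}^Y(y)$ are smooth on all of $\R$ and coincide with the Sector $\IV_\geq$ coefficients for $y\geq 0$.
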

\begin{proof}
The analyticity of $m_0$ follows directly from the definition. The bound (\ref{m0boundIVg}) is a consequence of (\ref{mZboundedIVg}).
Moreover,
\begin{align}\label{v1minusv0IVg}
& v^{(1)} - v_0 = \begin{cases}
   \begin{pmatrix}  
 0 & 0  \\
 (r_a(x,t,k) - p(t,z)) e^{t\Phi}   & 0 
  \end{pmatrix}, & k \in \mathcal{Z}_1^\epsilon \cup \mathcal{Z}_2^\epsilon,
  	\\ 
 \begin{pmatrix}  
 0 & -(r_a^*(x,t,k) - p^*(t, z)) e^{-t\Phi} \\
0 & 0 
  \end{pmatrix}, & k \in \mathcal{Z}_3^\epsilon \cup \mathcal{Z}_4^\epsilon,
  	\\ 
  \begin{pmatrix} - |r|^2 + |p|^2
  &  -(r^*(k) - p^*(t, z)) e^{-t\Phi} \\
(r(k) - p(t,z)) e^{t\Phi} 	& 0
  \end{pmatrix}, & k \in \mathcal{Z}_5^\epsilon.  
\end{cases}
\end{align}
Since
$$\re \Phi(\zeta, k_0 + re^{\frac{\pi i}{6}}) = -4 r^2(3^{3/2} k_0 + 2r) \leq -8r^3$$
for $\zeta \leq 0$ and $r \geq 0$, we obtain 
\begin{align}\label{PhionZ1IVg}
\re \Phi(\zeta, k) \leq -8|k-k_0|^3, \qquad k \in \mathcal{Z}_1^\epsilon, \ (x,t) \in \IV_\leq^T.
\end{align}
Suppose $k \in \mathcal{Z}_1^\epsilon$. If $|k-k_0| \geq k_0$, then $|k-k_0| \geq |k|/3$, and so
$$e^{-6t|k-k_0|^3} \leq e^{-2t|k|^3}, \qquad |k-k_0| \geq k_0, \ k \in \mathcal{Z}_1^\epsilon.$$
If $|k-k_0| \leq k_0$, then $|k| \leq Ct^{-1/3}$, and so
$$e^{-6t|k-k_0|^3} \leq 1 \leq Ce^{-2t|k|^3}, \qquad |k-k_0| \leq k_0, \ k \in \mathcal{Z}_1^\epsilon.$$
We conclude that
\begin{align}\label{etPhiIVg}
e^{-\frac{3t}{4}|\re \Phi|} \leq e^{-6t|k-k_0|^3} \leq Ce^{-2t|k|^3} \leq C e^{-\frac{2}{3}|z|^3}, \qquad k \in \mathcal{Z}_1^\epsilon, \ (x,t) \in \IV_\leq.
\end{align}

On the other hand,
\begin{align}\nonumber
\sum_{j=0}^N \frac{r^{(j)}(k_0)}{j!} (k-k_0)^j 
& = \sum_{j=0}^N\sum_{l=0}^N \frac{r^{(j+l)}(0)}{j! l!} k_0^l(k-k_0)^j  + O(k_0^{N+1})
	\\\nonumber
& = \sum_{l=0}^N \sum_{s=l}^{N+l} \frac{r^{(s)}(0)}{(s-l)! l!} k_0^l(k-k_0)^{s-l}   + O(k_0^{N+1})
	\\\nonumber
&= \sum_{s=0}^{2N} r^{(s)}(0) \sum_{l=0}^s \frac{k_0^l(k-k_0)^{s-l}}{(s-l)! l!}  + O(|k-k_0|^{N+1})+ O(k_0^{N+1})
	\\\nonumber
&= \sum_{s=0}^{2N} \frac{r^{(s)}(0)}{s!}k^s  + O(|k-k_0|^{N+1})+ O(k_0^{N+1})
	\\\label{sumsumdifference}
&= \sum_{s=0}^{N} \frac{r^{(s)}(0)}{s!}k^s  + O(|k|^{N+1})
\end{align}
uniformly for $0 \leq k_0 \leq C$ and $k \in \mathcal{Z}^\epsilon$.
Equations (\ref{pNdefIV}), (\ref{raatk0}), (\ref{v1minusv0IVg}), (\ref{etPhiIVg}), and (\ref{sumsumdifference}) imply, for $k \in \mathcal{Z}_1^\epsilon$,
\begin{align*}
 |v^{(1)} - v_0| \leq &\; C|r_a(x,t,k) - p(t,z)| e^{t \re \Phi}
 \leq \bigg|r_{a}(x, t, k) - \sum_{j=0}^N \frac{r^{(j)}(k_0)}{j!} (k-k_0)^j\bigg| e^{t \re \Phi}
 	\\
& + \bigg|\sum_{j=0}^N \frac{r^{(j)}(k_0)}{j!} (k-k_0)^j - \sum_{j=0}^N \frac{r^{(j)}(0)}{j!} k^j\bigg| e^{t \re \Phi}
	\\
\leq  &\; C |k-k_0|^{N+1} e^{-\frac{3}{4}t |\re \Phi|} +  C|k|^{N+1} e^{- t |\re \Phi|} \leq C|zt^{-1/3}|^{N+1} e^{-\frac{2}{3}|z|^3}.
\end{align*}
As in the proof of Lemma \ref{m0lemmaIV}, this implies $\|v^{(1)} - v_0\|_{(L^1 \cap L^\infty)(\mathcal{Z}_1^{\epsilon})} \leq Ct^{-(N+1)/3}$.
Similar estimates apply to $\mathcal{Z}_j^{\epsilon}$ with $j = 2,3,4$. For $k \in \mathcal{Z}_5^{\epsilon}$, we have $\re \Phi = 0$, and so, by (\ref{pNdefIV}) and (\ref{v1minusv0IVg}),
\begin{align*}
 |v^{(1)} - v_0| \leq C |r(k) - p(t,z)| \leq C |k|^{N+1} \leq Ct^{-\frac{N+1}{3}}, \qquad k \in \mathcal{Z}_5^{\epsilon}.
\end{align*}
This proves (\ref{v2v0estimateIVg}). The rest of the lemma follows in the same way as Lemma \ref{m0lemmaIV}. 
\end{proof}

\subsection{The solution $\hat{m}$} 
Let $\hat{\Gamma} := \Gamma^{(1)} \cup \partial D_\epsilon(0)$ (see Figure \ref{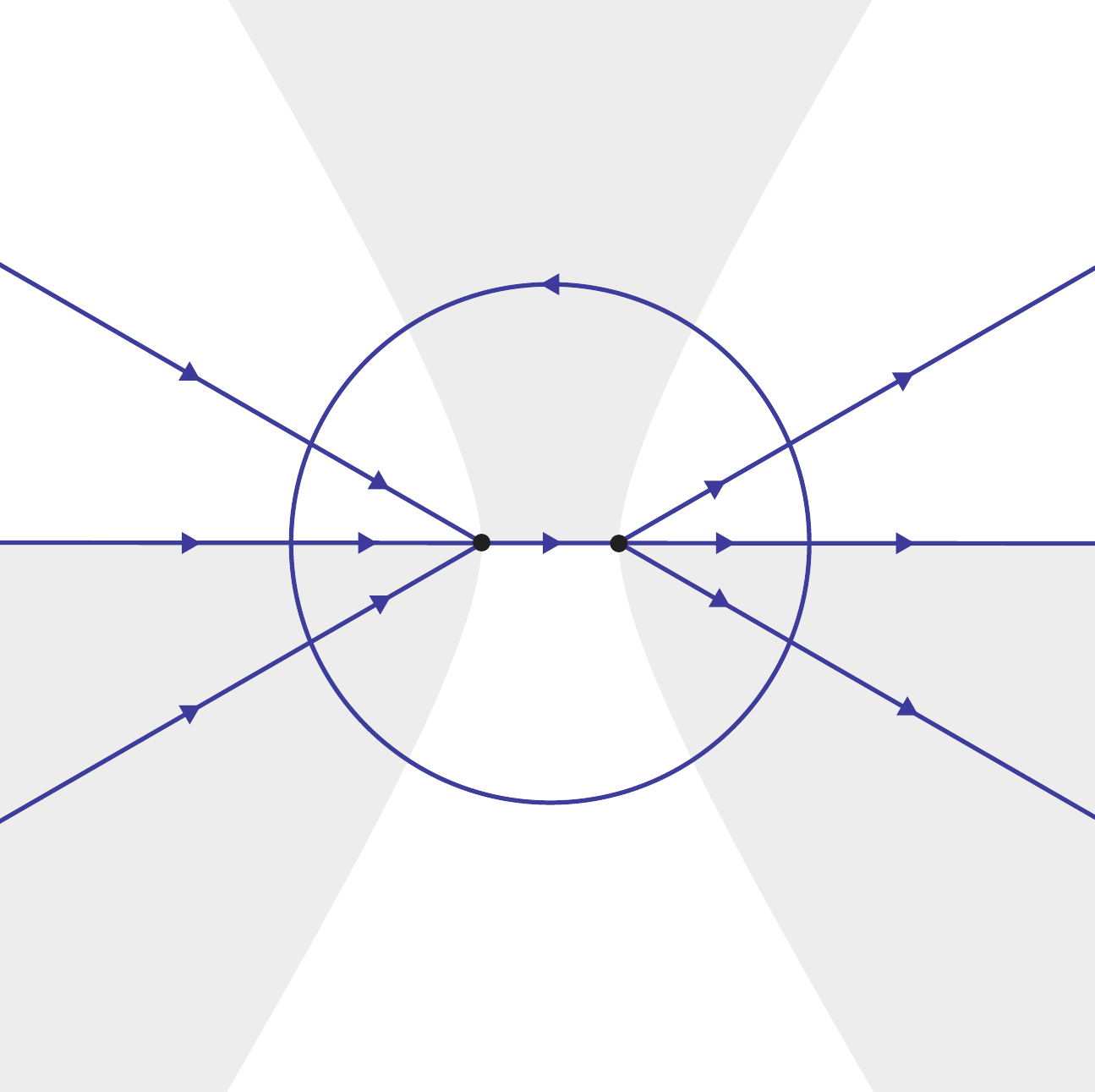}) and define $\hat{m}(x,t,k)$ by (\ref{mhatdef}). Then $\hat{m}$ satisfies (\ref{RHmhat}) with jump matrix $\hat{v}$ given by (\ref{vhatdef}).
Write $\hat{\Gamma}$ as the union of four subcontours as follows:
$$\hat{\Gamma} = \partial D_\epsilon(0) \cup \mathcal{Z}^\epsilon \cup (\R \setminus [-k_0,k_0])\cup \hat{\Gamma}',$$
where $\hat{\Gamma}' := \Gamma^{(1)} \setminus (\R \cup \overline{D_\epsilon(0)})$.

\begin{figure}
\begin{center}
\begin{overpic}[width=.5\textwidth]{Gammahatleq.pdf}
      \put(103,48){\small $\hat{\Gamma}$}
            \put(75,46.5){\small $\epsilon$}
      \put(19.5,46.5){\small $-\epsilon$}
    \end{overpic}
     \begin{figuretext}\label{Gammahatleq.pdf}
        The contour  $\hat{\Gamma}$ in the case of Sector $\IV_\leq$.
     \end{figuretext}
     \end{center}
\end{figure}

\begin{lemma}\label{wlemmaIVg}
Let $\hat{w} = \hat{v} - I$. For each $1 \leq p \leq \infty$, the following estimates hold uniformly for $(x,t) \in \IV_\leq^T$:
\begin{subequations}\label{westimateIVg}
\begin{align}\label{westimateIVga}
& \|\hat{w}\|_{L^p(\partial D_\epsilon(0))} \leq Ct^{-1/3},
	\\\label{westimateIVgb}
& \|\hat{w}\|_{L^p(\mathcal{Z}^\epsilon)} \leq Ct^{-(N+1)/3}, 
	\\ \label{westimateIVgc}
& \|\hat{w}\|_{L^p(\R \setminus [-k_0,k_0])} \leq C t^{-N},  
	\\\label{westimateIVgd}
& \|\hat{w}\|_{L^p(\hat{\Gamma}')} \leq Ce^{-ct}.
\end{align}
\end{subequations}
\end{lemma}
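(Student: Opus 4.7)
The plan is to mirror the proof of Lemma \ref{wlemmaIV} piece by piece, handling each of the four subcontours $\partial D_\epsilon(0)$, $\mathcal{Z}^\epsilon$, $\R\setminus[-k_0,k_0]$, and $\hat{\Gamma}'$ in turn, invoking Lemmas \ref{decompositionlemmaleq} and \ref{m0lemmaIVg} in place of their Sector $\IV_\geq$ counterparts. On $\partial D_\epsilon(0)$ the formula (\ref{vhatdef}) gives $\hat{w}=m_0^{-1}-I$, and the extension of (\ref{m0LinftyestimateIV}) supplied by Lemma \ref{m0lemmaIVg} yields $\|\hat{w}\|_{L^\infty(\partial D_\epsilon(0))}=O(t^{-1/3})$; the $L^p$ bound for every $p$ follows because $\partial D_\epsilon(0)$ has fixed finite length. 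This gives (\ref{westimateIVga}).

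On $\mathcal{Z}^\epsilon$, both $m^{(1)}$ and $m_0$ have nontrivial jumps, namely $v^{(1)}$ and $v_0$ respectively. Using $m_{0+}=m_{0-}v_0$ I will write
\begin{align*}
\hat{w}=\hat{v}-I=m_{0-}(v^{(1)}-v_0)m_{0+}^{-1},
\end{align*}
and then combine the uniform boundedness (\ref{m0boundIVg}) of $m_0$ with the small-jump estimate (\ref{v2v0estimateIVg}) to obtain (\ref{westimateIVgb}). This is the place where the local model was designed precisely so that the discrepancy $v^{(1)}-v_0$ is of the desired order $O(t^{-(N+1)/3})$.

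On the remaining part of the real axis, $\R\setminus[-k_0,k_0]$, the jump is $v^{(1)}=v_3^{(1)}$ from (\ref{v1def}), whose entries involve only the small remainder $r_r$ multiplied by $e^{\pm t\Phi}$; since $\re\Phi=0$ on $\R$, the exponentials are harmless, and Lemma \ref{decompositionlemmaleq}(c) gives $\|r_r\|_{L^1\cap L^\infty}=O(t^{-N})$, yielding (\ref{westimateIVgc}) on the portion of this set outside $D_\epsilon(0)$. For the part inside $D_\epsilon(0)$ (nonempty for large $t$, since $k_0=O(t^{-1/3})<\epsilon$) the jump is instead conjugated by $m_{0\pm}$, and one simply multiplies by the uniform bound (\ref{m0boundIVg}). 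The only care needed is to split $\R\setminus[-k_0,k_0]$ at $\pm\epsilon$ to treat the two cases correctly; this is the mildest source of bookkeeping in the proof.

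Finally, on $\hat{\Gamma}'=\Gamma^{(1)}\setminus(\R\cup\overline{D_\epsilon(0)})$ the jump $\hat{v}=v^{(1)}$ has off-diagonal entries of the form $r_a e^{t\Phi}$ and $r_a^{*}e^{-t\Phi}$. Lemma \ref{decompositionlemmaleq}(b) bounds $|r_a|$ by $Ce^{(t/4)|\re\Phi|}$, so each such entry is bounded by $Ce^{-(3t/4)|\re\Phi|}$. Because the rays bounding $V$ and $V^*$ were chosen exactly so that $|\re\Phi|\geq c>0$ uniformly on $\hat{\Gamma}'$ (for $\zeta\in[-A,0]$ and $|k|\geq\epsilon$), this immediately yields (\ref{westimateIVgd}). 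No step is a genuine obstacle; the essential content has already been isolated in Lemmas \ref{decompositionlemmaleq} and \ref{m0lemmaIVg}, and this lemma merely assembles those inputs contour by contour, with the only subtlety being the splitting of the real line to account for the fact that $k_0$ lies inside $D_\epsilon(0)$.
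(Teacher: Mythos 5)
Your proof is correct and follows exactly the route the paper intends: the paper's own proof of this lemma consists of the single sentence that it is analogous to Lemma \ref{wlemmaIV}, and your contour-by-contour argument (including the identity $\hat{w}=m_{0-}(v^{(1)}-v_0)m_{0+}^{-1}$ on $\mathcal{Z}^\epsilon$ and the observation that the segment $[-k_0,k_0]$ is already absorbed into $\mathcal{Z}_5^\epsilon$) is precisely that analogue, with the correct substitutions of Lemmas \ref{decompositionlemmaleq} and \ref{m0lemmaIVg}.
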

\begin{proof}
The proof is analogous to the proof of Lemma \ref{wlemmaIV}.
\end{proof}

The remainder of the derivation in Sector $\IV_\leq$ now proceeds as in Sector $\IV_\geq$.

\appendix
\section{Model problem for Sector $IV_\geq$}\label{IVapp}
We first need to review the RH approach to the Painlev\'e II equation.

\subsection{Painlev\'e II Riemann--Hilbert problem}
Let $P = \cup_{n=1}^6 P_n$ denote the contour consisting of the six rays 
$$P_n = \biggl\{z \in \C \, \bigg|\, \arg z = \frac{\pi}{6} + \frac{\pi (n-1)}{3}\biggr\}, \qquad n = 1, \dots, 6,$$
oriented away from the origin, see Figure \ref{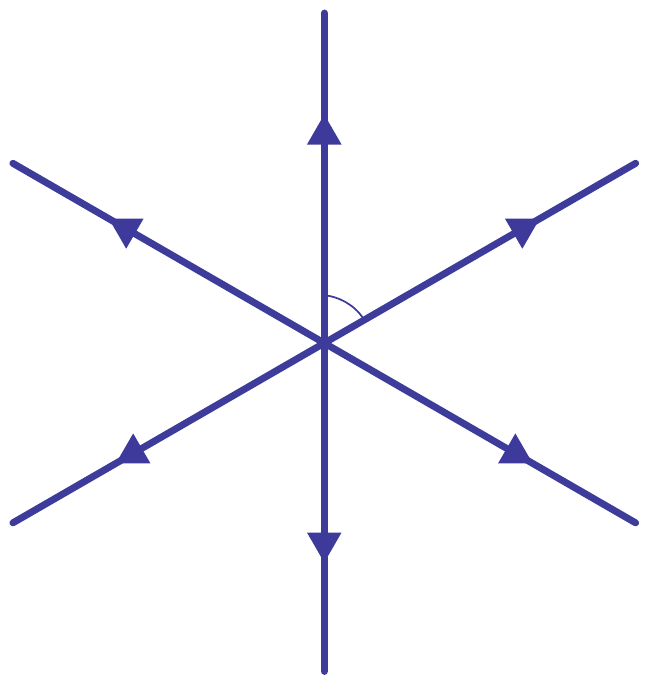}. %The next lemma summarizes the results we need.

\begin{figure}
\begin{center}
 \begin{overpic}[width=.35\textwidth]{Sixrays.pdf}
 \put(49,58){\small $\pi/3$}
 \put(72,70.5){\small $P_1$}
 \put(37,79){\small $P_2$}
 \put(15,70.5){\small $P_3$}
 \put(15,38){\small $P_4$}
 \put(38,18){\small $P_5$}
 \put(72,38){\small $P_6$}
   \end{overpic}
     \begin{figuretext}\label{Sixrays.pdf}
       The six rays $P_n$, $n = 1, \dots, 6$, that make up the contour $P$.
     \end{figuretext}
     \end{center}
\end{figure}

\begin{proposition}[Painlev\'e II Riemann--Hilbert problem]\label{PainlevepropIV}
Let $\mathcal{S} = \{s_1, s_2, s_3\}$ be a set of complex constants such that 
\begin{align}\label{s123condition}
s_1 -s_2 +s_3 + s_1s_2s_3 = 0
\end{align}
and define the matrices $\{S_n\}_1^6$ by
$$S_n = \begin{pmatrix} 1 & 0 \\ s_n & 1 \end{pmatrix}, \quad n \text{ odd};
\qquad
S_n = \begin{pmatrix} 1 & s_n \\ 0 & 1 \end{pmatrix}, \quad n \text{ even},$$
where $s_{n+3} = - s_n$, $n = 1,2,3$. 
Then there exists a countable set $Y_{\mathcal{S}} = \{y_j\}_{j=1}^\infty \subset \C$ with $y_j \to \infty$ as $j \to \infty$, such that the classical RH problem 
\begin{align}\label{RHmP}
\begin{cases} m_+^P(y, z) =  m_-^P(y, z) e^{-i(yz + \frac{4}{3}z^3)\sigma_3} S_n e^{i(yz + \frac{4}{3}z^3)\sigma_3}, & z \in P_n\setminus \{0\}, \ n = 1, \dots, 6, \\
m^P(y, z) = I + O(z^{-1}), & z \to \infty, \\
m^P(y,z) = O(1), & z \to 0,
\end{cases} 
\end{align}
has a unique solution $m^P(y, z)$ for each $y \in \C \setminus Y_{\mathcal{S}}$. 
For each $n$, the restriction of $m^P$ to $\arg z \in (\frac{\pi(2n-3)}{6}, \frac{\pi (2n-1)}{6})$ admits an analytic continuation to $(\C \setminus Y_{\mathcal{S}}) \times \C$.
%these functions have poles at $y_j \in Y_{\mathcal{S}}$ and the coefficients of the corresponding Laurent series are entire functions of $z$. 
Moreover, there are smooth functions $\{m_j^P(y)\}_1^\infty$ of $y \in \C \setminus Y_{\mathcal{S}}$ such that, for each integer $N \geq 0$,
\begin{align}\label{mPasymptotics}
m^P(y, z) = I + \sum_{j=1}^N \frac{m_j^P(y)}{z^j} + O(z^{-N-1}), \qquad z \to \infty,
\end{align}
uniformly for $y$ in compact subsets of $\C \setminus Y_{\mathcal{S}}$ and for $\arg z \in [0,2\pi]$. The off-diagonal elements of the leading coefficient $m_1^P$ are given by
$$(m_1^P(y))_{12} = (m_1^P(y))_{21} = \frac{1}{2} u_P(y),$$
where $u_P(y) \equiv u_P(y; s_1, s_2, s_3)$ satisfies the Painlev\'e II equation (\ref{painleve2}).
The map $(s_1,s_2,s_3) \in \mathcal{S} \mapsto u_P(\cdot; s_1, s_2, s_3)$ is a bijection
\begin{align}\label{painlevebijection}
\{(s_1,s_2,s_3) \in \C^3 \,|\, s_1 -s_2 +s_3 + s_1s_2s_3 = 0\} \to \{\text{solutions of }(\ref{painleve2})\}
\end{align}
and $Y_{\mathcal{S}}$ is the set of poles of $u_P(\cdot; s_1, s_2, s_3)$.
The solution $u_P$ obeys the reality condition 
\begin{align}\label{uprealitycondition}
u_P(y; s_1, s_2, s_3) = \overline{u_P(\bar{y}; s_1, s_2, s_3)}
\end{align}
if and only if $\{s_n\}_1^3$ satisfy $s_3 = \bar{s}_1$ and $s_2 = \bar{s}_2$.
If $\mathcal{S} = (s,0,-s)$ where $s \in i\R$ and $|s| < 1$, then $Y_{\mathcal{S}} \cap \R = \emptyset$, the leading coefficient $m_1^P$ is given by
$$m_1^P(y) = \frac{1}{2} \begin{pmatrix} -i\int_y^\infty u_P(y')^2dy' & u_P(y) \\ u_P(y) & i\int_y^\infty u_P(y')^2dy'  \end{pmatrix},$$ 
and, for each $C_1 > 0$,
\begin{align}\label{mPbounded}
\sup_{y \geq -C_1} \sup_{z \in \C\setminus P} |m^P(y,z)|  < \infty.
\end{align}
\end{proposition}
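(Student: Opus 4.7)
The plan is to prove Proposition \ref{PainlevepropIV} in three stages (solvability plus analytic continuation; the $z\to\infty$ expansion and identification with Painlev\'e II; bijection, reality, and the special case $(s,0,-s)$), with the uniform boundedness (\ref{mPbounded}) as the main obstacle. First observe that $S_1 S_2 \cdots S_6 = I$ is equivalent to the constraint (\ref{s123condition}), so the jumps are consistent at $z=0$ and the $O(1)$ condition there does not force any pole. On each ray $P_n$, the triangular form of $S_n$ places the exponential factors $e^{\pm i(yz + 4z^3/3)}$ in the off-diagonal slot where they decay, so $w^P := v^P - I \in (L^1 \cap L^\infty)(P)$ with norm locally bounded in $y$. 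Casting the RH problem as the singular integral equation $\mu = I + \mathcal{C}_{w^P}\mu$, the operator $I - \mathcal{C}_{w^P}$ is analytic in $y \in \C$ and Fredholm of index zero (since $\det v^P = 1$), so by the analytic Fredholm theorem its non-invertibility locus $Y_{\mathcal{S}}$ is discrete, yielding a unique $m^P$ on $(\C\setminus Y_{\mathcal{S}})\times(\C\setminus P)$. Analytic continuation to $(\C\setminus Y_{\mathcal{S}})\times \C$ is standard: multiplying $m^P$ on one side of each ray by $S_n^{\pm 1}$ cancels the jump, and the triangular form of $S_n$ combined with the entirety of the exponent produces an analytic extension into each neighbouring sector.

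Exponential decay of $w^P$ on the rays at infinity allows a geometric expansion of the Cauchy kernel in $m^P(y,z) = I + \frac{1}{2\pi i}\int_P (\mu w^P)(y,s) \frac{ds}{s-z}$, yielding (\ref{mPasymptotics}) with smooth coefficients $m_j^P(y) = -\frac{1}{2\pi i}\int_P \mu w^P\, s^{j-1}\, ds$; uniformity for $\arg z \in [0,2\pi]$ uses the preceding continuation. To link $m_1^P$ to Painlev\'e II, set $\Psi(y,z) := m^P(y,z) e^{-i(yz+4z^3/3)\sigma_3}$. The jumps of $\Psi$ across $P$ are piecewise constant, so $U := \Psi_y \Psi^{-1}$ and $V := \Psi_z \Psi^{-1}$ are entire in $z$. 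Matching their large-$z$ expansions and using that $U,V$ must be polynomial forces
\begin{align*}
U = -iz\sigma_3 + i[\sigma_3, m_1^P], \qquad V = -4iz^2\sigma_3 + 4iz[\sigma_3, m_1^P] - iy\sigma_3 - 4i a_2,
\end{align*}
with $a_2$ polynomial in $m_1^P, m_2^P$. The zero-curvature condition $U_z - V_y + [U,V] = 0$ then reduces to the Painlev\'e II equation (\ref{painleve2}) for $u_P(y) := 2(m_1^P(y))_{21}$; moreover the $z \to -z$ symmetry induced by $s_{n+3} = -s_n$ yields $(m_1^P)_{12} = (m_1^P)_{21}$.

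The bijectivity of (\ref{painlevebijection}) is the classical direct/inverse monodromy theorem of Flaschka-Newell, with injectivity obtained by reading off the Stokes multipliers of the Lax pair above and surjectivity supplied by the RH construction just described. The reality statement follows from noticing that $\sigma_1 \overline{m^P(\bar y, \bar z)} \sigma_1$ solves (\ref{RHmP}) with data a conjugation-induced permutation of $(\overline{s_1}, \overline{s_2}, \overline{s_3})$; uniqueness together with the bijection forces $s_3 = \overline{s_1}$ and $s_2 = \overline{s_2}$ to recover (\ref{uprealitycondition}). For $\mathcal{S} = (s, 0, -s)$ with $s \in i\R$ and $|s| < 1$, the vanishing $s_2 = 0$ makes the middle pair of jumps trivial, allowing deformation of $P$ to $\R$ with resulting jump a Deift-Zhou pure reflection of size $|s| < 1$; the Zhou vanishing lemma then rules out nontrivial homogeneous solutions for real $y$, giving $Y_{\mathcal{S}}\cap\R = \emptyset$. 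The explicit formula for $m_1^P$ follows from the off-diagonal identification $(m_1^P)_{12} = (m_1^P)_{21} = u_P/2$ together with the identity $\partial_y(m_1^P)_{11} = -\tfrac{i}{2}u_P^2$ (read off as the $O(z^{-1})$ consistency condition from the entirety of $U$), integrated from $y$ to $+\infty$ using the rapid decay of $u_P$ at $+\infty$ (Ablowitz-Segur).

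The main obstacle is the uniform bound (\ref{mPbounded}). With $Y_{\mathcal{S}}\cap\R = \emptyset$ established, continuity of $m^P$ in $y$ gives uniform control on any compact subinterval of $[-C_1, \infty)$. For $y \to +\infty$, the Ablowitz-Segur decay $u_P(y) = O(y^{-1/4} e^{-\frac{2}{3}y^{3/2}})$ yields $\|w^P\|_{L^\infty(P)} \to 0$, forcing $m^P \to I$ uniformly in $z \in \C\setminus P$ through the Cauchy representation combined with the small-norm $L^2$ estimate on $\mu - I$; patching this with the compact-set bound produces the joint uniform $L^\infty$ bound on $[-C_1, \infty)\times(\C\setminus P)$.
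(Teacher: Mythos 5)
The paper does not actually prove this proposition from scratch: it cites Theorems 3.4 and 4.2 and Corollary 4.4 of the Fokas--Its--Kapaev--Novokshenov monograph for solvability, the bijection, the reality condition, and the bound (\ref{mPbounded}), and only carries out one computation itself, namely substituting the expansion (\ref{mPasymptotics}) into the $y$-equation of the Lax pair (\ref{painlevelax}) to identify $m_1^P$. Your proposal instead reconstructs the whole FIKN package (analytic Fredholm theory for the discreteness of $Y_{\mathcal{S}}$, constant-jump/Liouville argument for the Lax pair, Zhou's vanishing lemma for $Y_{\mathcal{S}}\cap\R=\emptyset$, small-norm theory for $y\to+\infty$). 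That is a legitimate, essentially standard alternative route, and the skeleton is sound; what it buys is self-containedness at the cost of redoing a substantial body of known work.

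Two points in your sketch need repair. First, the sign in the $O(z^{-1})$ consistency condition: carrying out the expansion of $\partial_y m^P\,(m^P)^{-1} - iz\,m^P\sigma_3 (m^P)^{-1} = -iz\sigma_3 - u_P\sigma_2$ gives $\partial_y (m_1^P)_{\mathrm{diag}} = i[\sigma_3,m_1^P]m_1^P\big|_{\mathrm{diag}} = \tfrac{i}{2}u_P^2\,\sigma_3$, i.e.\ $\partial_y(m_1^P)_{11}=+\tfrac{i}{2}u_P^2$, not $-\tfrac{i}{2}u_P^2$; with your sign the integration from $y$ to $+\infty$ produces $+\tfrac{i}{2}\int_y^\infty u_P^2$, contradicting the stated formula. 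Second, your mechanism for (\ref{mPbounded}) as $y\to+\infty$ is both circular and false as stated: the Ablowitz--Segur decay of $u_P$ is a \emph{consequence} of the RH analysis, not an input, and on the undeformed contour $P$ one has $|e^{2i(yz+\frac43 z^3)}|=e^{-yr-\frac83 r^3}\to 1$ as $r\to0^+$ on $P_1$, so $\|w^P\|_{L^\infty(P)}=|s|$ for every $y$ and does not tend to zero. The correct argument (available because $S_2=S_5=I$) deforms $P_1\cup P_3$ and $P_4\cup P_6$ through the saddle points $\pm i\sqrt{y}/2$ of $yz+\frac43z^3$, on which $|e^{\pm 2i(yz+\frac43 z^3)}|\le e^{-\frac{2}{3}y^{3/2}}$, and only then applies the small-norm estimates; the same deformation is what yields the sup over $z$ up to the contour on compact $y$-sets. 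With these two corrections the outline matches the standard proofs in the cited literature.
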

\begin{proof}
The proof uses the fact that the Painlev\'e II equation (\ref{painleve2}) is the compatibility condition of the Lax pair
\begin{align}\label{painlevelax}
\begin{cases}
\partial_y\Psi + i z \sigma_3\Psi = -u_P \sigma_2 \Psi,
	\\
\partial_z\Psi + i(y + 4 z^2)\sigma_3\Psi = (-2iu_P^2 \sigma_3 - 4zu_P\sigma_2 - 2(u_P)'\sigma_1) \Psi,
\end{cases}
\end{align}
see Theorem 3.4, Theorem 4.2 and Corollary 4.4 in \cite{FIKN2006}. 
The reality condition (\ref{uprealitycondition}) can be found on p. 158 of \cite{FIKN2006}.
Employing the relation $\Psi = m^{P}e^{-i(yz+4/3 z^{3} )\sigma_{3}}$, the expression for $m_1^P(y)$ can be obtained by substituting the expansion (\ref{mPasymptotics}) of $m^P$ into the first equation in (\ref{painlevelax}) and identifying powers of $z$.
%The asymptotic formulas (\ref{mjPydecay}) and (\ref{mPasymptotics2}) follow by noticing that the main contribution comes from two small crosses centered at the critical points $z = \pm \sqrt{-y}/2$, see \cite{DZ1995}.
%See \cite{HM1980} and p. 51 of \cite{IN1986} or Theorem 5.6 of \cite{FIKN2006} for the last statement regarding $Y_{\mathcal{S}} \cap \R = \emptyset$.
\end{proof}

\subsection{Model problem for Sector $\IV_\geq$}
%We can now consider the RH problem (\ref{RHmYIV}) relevant for Sector $\IV_\geq$.
Let $Y$ denote the contour $Y = \cup_{j=1}^4 Y_j$ oriented to the right as in Figure \ref{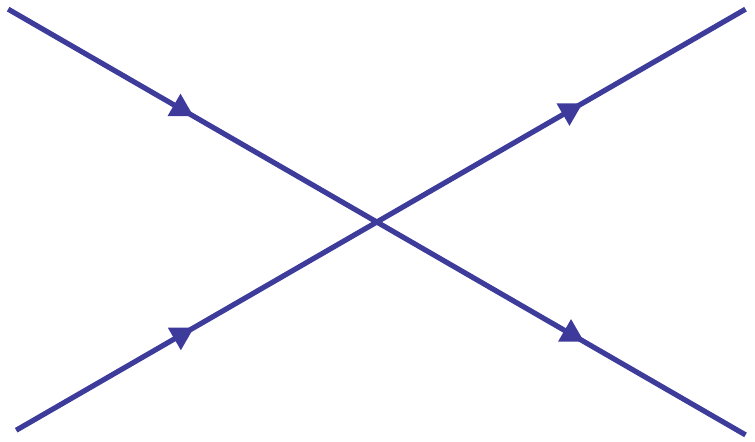}, where 
\begin{align} \nonumber
&Y_1 = \bigl\{re^{\frac{i\pi}{6}}\, \big| \, 0 \leq r < \infty\bigr\}, && Y_2 = \bigl\{re^{\frac{5i\pi}{6}}\, \big| \, 0 \leq r < \infty\bigr\},  
	\\ \label{YdefIV}
&Y_3 = \bigl\{re^{-\frac{5i\pi}{6}}\, \big| \, 0 \leq r < \infty\bigr\}, && Y_4 = \bigl\{re^{-\frac{i\pi}{6}}\, \big| \, 0 \leq r < \infty\bigr\}.
\end{align}
The long-time asymptotics in Sector $\IV_\geq$ is related to the solution $m^Y$ of the following family of RH problems parametrized by $(y,t)$:
\begin{align}\label{RHmYIV}
\begin{cases}
m^Y(y, t, \cdot) \in I + \dot{E}^2(\C \setminus Y),\\
m_+^Y(y, t, z) =  m_-^Y(y, t, z) v^Y(y, t, z) \quad \text{for a.e.} \ z \in Y,
\end{cases}
\end{align}
where the jump matrix $v^Y(y, t, z)$ has the form
\begin{align}\label{vYdefIV}
v^Y(y, t, z) = \begin{cases}
 \begin{pmatrix} 
 1	& 0 \\
p(t, z)e^{2i(y z + \frac{4z^3}{3})}  & 1
\end{pmatrix}, &  z \in Y_1 \cup Y_2, 
	\\
\begin{pmatrix} 1 & -p^*(t, z)e^{-2i(y z + \frac{4z^3}{3})}	\\
0	& 1 
\end{pmatrix}, &   z \in Y_3 \cup Y_4, 
\end{cases}
\end{align}
with the function $p$ specified below.

\begin{figure}
\begin{center}
 \begin{overpic}[width=.4\textwidth]{Y.pdf}
 \put(72,48){\small $Y_1$}
 \put(22,48){\small $Y_2$}
 \put(23,6.5){\small $Y_3$}
 \put(72,7){\small $Y_4$}
 \put(48.5,23){\small $0$}
 \end{overpic}
   \bigskip
   \begin{figuretext}\label{Y.pdf}
      The contour $Y$ defined in (\ref{YdefIV}). 
      \end{figuretext}
   \end{center}
\end{figure}

\begin{lemma}[Model problem for Sector $\IV_\geq$]\label{YlemmaIV}
Let, for some integer $n \geq 0$,
\begin{align}\label{psumIV}
p(t,z) = s + \sum_{j=1}^n \frac{p_{j}z^j}{t^{j/3}},
\end{align}
be a polynomial in $z t^{-1/3}$ with coefficients $s \in \{ir \, |  -1 < r < 1\}$ and $\{p_j\}_1^n \subset \C$.

\begin{enumerate}[$(a)$]
\item There is a $T \geq 1$ such that the RH problem (\ref{RHmYIV}) with jump matrix $v^Y$ given by (\ref{vYdefIV}) has a unique solution $m^Y(y, t, z)$ whenever $y \geq 0$ and $t \geq T$. 

\item For each integer $N \geq 1$, there are smooth functions $\{m_{jl}^Y(y)\}$ of $y \in [0,\infty)$ such that
\begin{align}\label{mYasymptoticsIV}
&  m^Y(y, t, z) = I + \sum_{j=1}^N \sum_{l=0}^N \frac{m_{jl}^Y(y)}{z^j t^{l/3}}  + O\biggl(\frac{t^{-(N+1)/3}}{|z|} + \frac{1}{|z|^{N+1}}\biggr), \qquad z \to \infty,
\end{align}
uniformly with respect to $\arg z \in [0, 2\pi]$, $y \geq 0$, and $t \geq T$.

\item $m^Y$ obeys the bound
\begin{align}\label{mYboundedIV}
\sup_{y \geq 0} \sup_{t \geq T} \sup_{z \in \C\setminus Y} |m^Y(y, t, z)|  < \infty.
\end{align}

\item $m^Y$ obeys the symmetry
\begin{subequations}\label{mYsymmIV}
\begin{align}
m^Y(y, t, z) = \sigma_1\overline{m^Y(y, t, \bar{z})} \sigma_1.
\end{align}
If $p(t,z) = -\overline{p(t,-\bar{z})}$, then it also obeys the symmetry
\begin{align}
m^Y(y, t, z) = \sigma_1\sigma_3m^Y(y, t, -z) \sigma_3\sigma_1.
\end{align}
\end{subequations}

\item The leading coefficient in (\ref{mYasymptoticsIV}) is given by
\begin{align}\label{m10Yexplicit}
m_{10}^Y(y) =\frac{1}{2} \begin{pmatrix} -i\int_y^\infty u_P(y'; s, 0, -s)^2dy' & u_P(y; s, 0, -s) \\ u_P(y; s, 0, -s) & i\int_y^\infty u_P(y'; s, 0, -s)^2dy'  \end{pmatrix},
\end{align}
where $u_P(\cdot; s, 0, -s)$ denotes the smooth real-valued solution of Painlev\'e II corresponding to $(s,0,-s)$ via (\ref{painlevebijection}).

\item If $s = 0$, $p_1 \in \R$, and $p_2 \in i\R$, then the leading coefficients are given by
\begin{align}\nonumber
& m_{10}^Y(y) = 0,
	\\\nonumber
& m_{11}^Y(y) = \frac{p_1}{4}\Ai'(y)\sigma_1,
	\\\nonumber
& m_{12}^Y(y) = \frac{p_1^2}{8i} \bigg(\int_y^{\infty} (\Ai'(y'))^2dy'\bigg) \sigma_3 
+ \frac{p_2}{8i} \Ai''(y) \sigma_1,
	\\ \label{mYexplicitIV}
& m_{21}^Y(y) = -\frac{p_1}{8i}\Ai''(y) \sigma_3\sigma_1.
\end{align}
\end{enumerate}
\end{lemma}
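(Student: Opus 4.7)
The overall strategy is to construct $m^Y$ as a small perturbation of the Painlev\'e II solution $m^P$ associated with the triple $(s_1,s_2,s_3) = (s,0,-s)$. With this choice, Proposition \ref{PainlevepropIV} guarantees that $m^P$ exists for all real $y$ and is uniformly bounded on $\{y \geq -C_1\} \times (\C\setminus P)$; moreover, since $s_2 = s_5 = 0$ the Painlev\'e II jumps across $P_2$ and $P_5$ are trivial, so its effective jump contour reduces to $P_1 \cup P_3 \cup P_4 \cup P_6 = Y$, and a direct comparison (noting that $Y_2,Y_3$ carry orientations opposite to $P_3,P_4$) shows that the Painlev\'e II jumps match $v^Y$ with $p(t,z) \equiv s$.

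Based on this, I would define $\mathcal{R}(y,t,z) := m^Y(y,t,z)(m^P(y,z))^{-1}$, which should satisfy an RH problem on $Y$ with jump $v^{\mathcal{R}} = m^P_-\bigl(v^Y(v^P)^{-1}\bigr)(m^P_-)^{-1}$. Since $v^Y(v^P)^{-1} - I$ has off-diagonal entries proportional to $(p(t,z)-s)e^{\pm 2i(yz+4z^3/3)}$, and $|e^{2i(yz+4z^3/3)}| = e^{-8|z|^3/3}$ on $Y_1$ (with analogous estimates on the other rays) for $y \geq 0$, the polynomial growth of $p(t,z)-s = \sum_{j=1}^n p_j z^j t^{-j/3}$ is dominated by this Gaussian-type decay, yielding $\|v^{\mathcal{R}} - I\|_{(L^1 \cap L^\infty)(Y)} \leq C t^{-1/3}$ uniformly for $y \geq 0$. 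Standard small-norm Cauchy-operator theory then produces a unique $\mathcal{R} \in I + \dot{E}^2(\C\setminus Y)$ for $t \geq T$ with $T$ sufficiently large, from which $m^Y = \mathcal{R}\, m^P$ is recovered. The bound \eqref{mYboundedIV} follows from the uniform $L^\infty$ closeness of $\mathcal{R}$ to $I$ combined with the boundedness of $m^P$ on $\{y \geq 0\}$.

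To establish the expansion \eqref{mYasymptoticsIV}, I would expand $v^{\mathcal{R}} - I$ as a polynomial in $t^{-1/3}$ with smooth $y$-dependent coefficients plus an $O(t^{-(N+1)/3})$ remainder, insert it into the Neumann series for $(I - \mathcal{C}_{v^{\mathcal{R}}-I})^{-1}$, and read off the large-$z$ expansion of $m^Y = \mathcal{R}m^P$ from the integral representation $\mathcal{R} = I + \mathcal{C}\bigl((I - \mathcal{C}_{w^{\mathcal{R}}})^{-1}I \cdot w^{\mathcal{R}}\bigr)$ together with the known expansion of $m^P$. The $(j,l) = (1,0)$ coefficient arises entirely from $m^P$, giving $m^Y_{10}(y) = m^P_1(y)$ and hence \eqref{m10Yexplicit} via Proposition \ref{PainlevepropIV}. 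The symmetries \eqref{mYsymmIV} follow by applying the transformations $m \mapsto \sigma_1 \overline{m(y,t,\bar z)}\sigma_1$ and $m \mapsto \sigma_1\sigma_3 m(y,t,-z)\sigma_3\sigma_1$ to the RH data, verifying that $v^Y$ is preserved (the second symmetry needs the hypothesis $p(t,z) = -\overline{p(t,-\bar z)}$), and invoking uniqueness.

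The explicit identification of $m^Y_{10}, m^Y_{11}, m^Y_{12}, m^Y_{21}$ in the degenerate case $s = 0$ is where the concrete work lies. Here Proposition \ref{PainlevepropIV} forces $u_P \equiv 0$, hence $m^P \equiv I$, so $\mathcal{R} = m^Y$ and the whole small-norm problem has $w^Y = v^Y - I$ tending to zero uniformly as $t \to \infty$. Iterating the Neumann series to three levels and expanding each $w^Y$-factor in $t^{-1/3}$, every resulting coefficient reduces to contour integrals of the form $\int_{Y_1 \cup Y_2} z^a e^{2i(yz + 4z^3/3)} dz$ (and its complex conjugate on $Y_3 \cup Y_4$); the substitution $w = 2z$ turns each such integral into a standard Airy integral $\int w^a e^{i(yw + w^3/3)} dw$, which evaluates in terms of $\Ai(y)$ and its derivatives. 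I expect the main obstacle to be handling the double-Cauchy contribution producing $m^Y_{12}$: specifically, showing that the self-interaction of the $p_1 z/t^{1/3}$ jump across $Y$ collapses to the diagonal term $\int_y^\infty (\Ai'(y'))^2 dy'$ in \eqref{mYexplicitIV}. I would attack this by differentiating the resulting double integral in $y$, reducing it to a single Airy integral via the Airy ODE $\Ai''(y) = y\Ai(y)$, and integrating back to match the claimed primitive.
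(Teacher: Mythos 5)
Your proposal follows essentially the same route as the paper's proof: conjugation by the Painlev\'e II solution $m^P(\cdot;s,0,-s)$, small-norm Cauchy-operator theory on $Y$ driven by the cubic exponential decay of $e^{\pm 2i(yz+\frac{4z^3}{3})}$, a Neumann-series expansion in powers of $t^{-1/3}$ combined with the expansion of $m^P$, symmetries via uniqueness, and, when $s=0$, reduction of the coefficients to Airy integrals with the double-Cauchy contribution to $m_{12}^Y$ handled exactly as in the paper by differentiating in $y$ and integrating back from $+\infty$. The only slip is cosmetic: on $Y_1$ one has $|e^{2i(yz+\frac{4z^3}{3})}| = e^{-yr-\frac{8}{3}r^3} \leq e^{-\frac{8}{3}|z|^3}$ for $y\geq 0$ (equality only at $y=0$), which does not affect the argument.
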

\begin{proof}
Let $u_P(y; s, 0, -s)$ denote the solution of the Painlev\'e II equation (\ref{painleve2}) corresponding to $(s,0,-s)$ according to (\ref{painlevebijection}). Since $s \in i\R$ and $|s| < 1$, $u_P(y; s, 0, -s)$ is a smooth real-valued function of $y \in \R$ (see Proposition \ref{PainlevepropIV}). Let $m^P(y,z) \equiv m^P(y,z;s,0,-s)$ be the corresponding solution of the RH problem (\ref{RHmP}). Then $m^P(y,z)$ solves the RH problem obtained from (\ref{RHmYIV}) by replacing the polynomial $p(t,z)$ on the right-hand side of (\ref{vYdefIV}) with its leading term $s$. 

The function $m^Y$ satisfies (\ref{RHmYIV}) iff $\hat{m}^Y := m^Y (m^P)^{-1}$ satisfies
\begin{align}\label{RHmYhatIV}
\begin{cases}
\hat{m}^Y(y, t, \cdot) \in I + \dot{E}^2(\C \setminus Y),\\
\hat{m}_+^Y(y, t, z) =  \hat{m}_-^Y(y, t, z) \hat{v}^Y(y, t, z) \quad \text{for a.e.} \ z \in Y,
\end{cases}
\end{align}
where the jump matrix obeys the relation $\hat{v}^Y -I = m_{-}^P (v^Y - v^P)(m_+^P)^{-1}$. 
Letting $\hat{w}^Y := \hat{v}^Y - I$, we can write
\begin{align}\label{wYexpansionIV}
\hat{w}^Y(y, t, z) = \frac{\hat{w}_1^Y(y,z)}{t^{1/3}} + \cdots + \frac{\hat{w}_n^Y(y,z)}{t^{n/3}},
\end{align}
where
\begin{align}\label{wjYIV}
\hat{w}_j^Y(y, z) = \begin{cases}
m_-^P \begin{pmatrix} 
 0	& 0 \\
p_{j}z^je^{2i(y z + \frac{4z^3}{3})}  & 0 
\end{pmatrix}(m_+^P)^{-1}, &  z \in Y_1 \cup Y_2, 
	\\
m_-^P\begin{pmatrix} 0 & - \bar{p}_{j}z^je^{-2i(y z + \frac{4z^3}{3})} 	\\
0	& 0 
\end{pmatrix}(m_+^P)^{-1}, &   z \in Y_3 \cup Y_4.
\end{cases}
\end{align}

We next note that
\begin{align}\label{eonYIV}
|e^{\pm 2i(y z + \frac{4z^3}{3})}|
\leq e^{-\frac{8}{3}|z|^3}, \qquad y \geq 0, \  z \in Y,
\end{align}
where the plus (minus) sign applies for $z \in Y_1 \cup Y_2$ ($z \in Y_3 \cup Y_4$). 
The estimates (\ref{mPbounded}) and (\ref{eonYIV}) give, for any integer $m \geq 0$,
\begin{align*}
|z^m\hat{w}_j^Y(y,z)| \leq &\; C |z|^{m+j} e^{-\frac{8}{3}|z|^3} \leq Ce^{-c|z|^3}, \qquad
y \geq 0, \ z \in Y, \ j = 1, \dots, n,
\end{align*}
and hence, for any integer $m \geq 0$ and any $1 \leq p \leq \infty$,
\begin{align}\label{wYLpestIV}
\begin{cases}
\|z^m\hat{w}_j^Y(y,z)\|_{L^p(Y)} \leq C, & j = 1, \dots, n,
	\\
\|z^m\hat{w}^Y(y,t,z)\|_{L^p(Y)} \leq Ct^{-1/3}, \quad & t \geq 1,
\end{cases} \;\; y \geq 0.
\end{align}
In particular,
$$\|\mathcal{C}_{\hat{w}^Y(y,t,\cdot)}^Y\|_{\mathcal{B}(L^2(Y))} \leq C \|\hat{w}^Y\|_{L^\infty(Y)} \leq Ct^{-1/3}, \qquad y \geq 0, \ t \geq 1.$$
Hence there exists a $T \geq 1$ such that the RH problem (\ref{RHmYhatIV}) has a unique solution $\hat{m}^Y \in I + \dot{E}^2(\C \setminus \hat{\Gamma})$ whenever $y \geq 0$ and $t \geq T$. This solution is given by 
\begin{align}\label{mYrepresentationIV}
\hat{m}^Y(y, t, z) = I + \mathcal{C}^Y(\hat{\mu}^Y \hat{w}^Y) = I + \frac{1}{2\pi i}\int_{Y} (\hat{\mu}^Y \hat{w}^Y)(y, t, s) \frac{ds}{s - z},
\end{align}
where $\hat{\mu}^Y(x, t, \cdot) \in I + L^2(Y)$ is defined by 
\begin{align}\label{muYdefIV}
\hat{\mu}^Y = I + (I - \mathcal{C}^Y_{\hat{w}^Y})^{-1}\mathcal{C}^Y_{\hat{w}^Y}I.
\end{align} 

Let $N \geq 1$ be an integer.
Using that
$$\mathcal{C}^Y_{\hat{w}^Y} = \frac{\mathcal{C}^Y_{\hat{w}_1^Y}}{t^{1/3}} + \frac{\mathcal{C}^Y_{\hat{w}_2^Y}}{t^{2/3}} + \cdots + \frac{\mathcal{C}^Y_{\hat{w}_n^Y}}{t^{n/3}},$$
it follows from (\ref{wYLpestIV}) and (\ref{muYdefIV}) that 
\begin{align}\nonumber
\hat{\mu}^Y(y, t,z) & = \sum_{r=0}^{N} (\mathcal{C}^Y_{\hat{w}^Y})^rI 
+ (I - \mathcal{C}^Y_{\hat{w}^Y})^{-1}(\mathcal{C}^Y_{\hat{w}^Y})^{N+1}I	
	\\\label{muYexpansionIV}
& = I + \sum_{j=1}^N \frac{\hat{\mu}_j^Y(y,z)}{t^{j/3}} + \frac{\hat{\mu}_{err}^Y(y,t,z)}{t^{(N+1)/3}},
\end{align}
where the coefficients $\hat{\mu}_j^Y$ and $\hat{\mu}_{err}^Y$ satisfy
\begin{align}\label{mujYestIV}
\begin{cases}
 \|\hat{\mu}_j^Y(y,\cdot)\|_{L^2(Y)} \leq C, & j = 1, \dots, N,
	\\
 \|\hat{\mu}_{err}^Y(y,t,\cdot)\|_{L^2(Y)} \leq C, \quad & t \geq T,
 \end{cases} \;\; y \geq 0.
\end{align}
Indeed,  to obtain (\ref{mujYestIV}) we note that $\hat{\mu}_{j}^Y$ is a sum of terms of the form $\mathcal{C}^Y_{\hat{w}_{j_1}^Y}\cdots \mathcal{C}^Y_{\hat{w}_{j_r}^Y}I$
where $j_1 + \cdots j_r = j$, which, by (\ref{wYLpestIV}), can be estimated as
\begin{align}\nonumber
\|\mathcal{C}^Y_{\hat{w}_{j_1}^Y}\cdots \mathcal{C}^Y_{\hat{w}_{j_{r}}^Y}I\|_{L^2(Y)} & \leq 
C\|\mathcal{C}_{\hat{w}_{j_1}^Y}^Y\|_{\mathcal{B}(L^2(Y))}\cdots
\|\mathcal{C}_{\hat{w}_{j_{r-1}}^Y}^Y\|_{\mathcal{B}(L^2(Y))}
\|\hat{w}_{j_r}^Y\|_{L^2(Y)}
	\\\label{CYhatwj1jrIV}
& \leq 
C\|\hat{w}_{j_1}^Y\|_{L^\infty(Y)}\cdots
\|\hat{w}_{j_{r-1}}^Y\|_{L^\infty(Y)}
\|\hat{w}_{j_r}^Y\|_{L^2(Y)} \leq C;
\end{align}
the coefficient $\hat{\mu}_{err}^Y$ involves terms of the same form (but with $j_1 + \cdots j_r \geq N+1$) which can be estimated in the same way, as well as terms of the form
$$(I - \mathcal{C}^Y_{\hat{w}^Y})^{-1}\bigg(\prod_{s=1}^{N+1}\mathcal{C}^Y_{\hat{w}_{j_s}^Y}\bigg)I$$	
which, employing (\ref{CYhatwj1jrIV}), can be estimated as
$$\Big\|(I - \mathcal{C}^Y_{\hat{w}^Y})^{-1}\bigg(\prod_{s=1}^{N+1}\mathcal{C}^Y_{\hat{w}_{j_s}^Y}\bigg)I\Big\|_{L^2(Y)} \leq
C\|(I - \mathcal{C}^Y_{\hat{w}^Y})^{-1}\|_{\mathcal{B}(L^2(Y))} \leq C.$$	

Substituting the expansions (\ref{wYexpansionIV}) and (\ref{muYexpansionIV}) into the representation (\ref{mYrepresentationIV}) for $\hat{m}^Y$, we infer that the following formula holds uniformly for $y \geq 0$ and $t \geq T$ as $z \in \C \setminus Y$ goes to infinity in any nontangential sector:
\begin{align*}\nonumber
\hat{m}^Y&(y, t, z) =  I - \sum_{j=1}^{N} \frac{1}{2\pi i z^j}\int_{Y} s^{j-1}\hat{\mu}^Y \hat{w}^Y ds
+ \frac{1}{2\pi i}\int_{Y} \frac{s^N\hat{\mu}^Y \hat{w}^Y}{z^N(s-z)}ds
	\\\nonumber
= &\; I - \sum_{j=1}^{N} \frac{1}{2\pi i z^j}\int_{Y} s^{j-1}\bigg(I + \sum_{a=1}^N \frac{\hat{\mu}_a^Y(y,z)}{t^{a/3}} + \frac{\hat{\mu}_{err}^Y(y,t,z)}{t^{(N+1)/3}}\bigg)
\bigg(\sum_{b=1}^n \frac{\hat{w}_b^Y(y,z)}{t^{b/3}}\bigg)ds
	\\
& + O(|z|^{-N-1} \|\hat{\mu}^Y\|_{L^2(Y)} \|s^N\hat{w}^Y\|_{L^2(Y)}),
\end{align*}
i.e., utilizing (\ref{wYLpestIV}) and (\ref{mujYestIV}) and setting $\hat{w}_i^Y \equiv 0$ for $i \geq n+1$ if $N \geq n + 1$,
\begin{align}\nonumber
\hat{m}^Y(y, t, z) = &\; I - \sum_{j=1}^{N} \frac{1}{2\pi i z^j} \bigg\{\sum_{l=1}^N t^{-l/3} \int_{Y} s^{j-1} \bigg(\hat{w}_l^Y + \sum_{i=1}^{l-1} \hat{\mu}_{l-i}^Y \hat{w}_i^Y\bigg)ds 
 + O\big(t^{-\frac{N+1}{3}}\big)\bigg\}
	\\\label{mYexpansionIV}
& + O\big(|z|^{-N-1}t^{-1/3}\big).
\end{align}
Repeating the above steps with $Y$ replaced by a slightly deformed contour $\tilde{Y}$, we see that in fact the condition that $z$ lies in a nontangential sector can be dropped in (\ref{mYexpansionIV}). We deduce that 
\begin{align}\label{mhatYasymptoticsIV}
&  \hat{m}^Y(y, t, z) = I + \sum_{j=1}^N \sum_{l=1}^N \frac{\hat{m}_{jl}^Y(y)}{z^j t^{l/3}}  + O\biggl(\frac{t^{-(N+1)/3}}{|z|} + \frac{t^{-1/3}}{|z|^{N+1}}\biggr), \qquad z \to \infty,
\end{align}
uniformly with respect to $\arg z \in [0, 2\pi]$, $y \geq 0$, and $t \geq T$, where
$$\hat{m}_{jl}^Y(y) = - \frac{1}{2\pi i} \int_{Y} s^{j-1} \bigg(\hat{w}_l^Y + \sum_{i=1}^{l-1}\hat{\mu}_{l-i}^Y \hat{w}_i^Y\bigg)(y,s)ds, \qquad 1 \leq j,l \leq N.$$ 
The smoothness of $\hat{m}_{jl}^Y(y)$ follows from the fact (see (\ref{wjYIV}) and (\ref{muYexpansionIV})) that $y \mapsto (\cdot)^m\hat{w}_j^Y(y, \cdot)$ and $y \mapsto \hat{\mu}_j^Y(y, \cdot)$ are smooth maps $[0,\infty) \to L^p(Y)$, $1 \leq p \leq \infty$, and $[0,\infty) \to L^2(Y)$, respectively. 
The expansion (\ref{mYasymptoticsIV}) of $m^Y = \hat{m}^Y m^P$ follows from the expansions (\ref{mPasymptotics}) and (\ref{mhatYasymptoticsIV}).
The bound (\ref{mYboundedIV}) follows from (\ref{mPbounded}) and (\ref{mhatYasymptoticsIV}) and the fact that the contour can be deformed. 
The symmetries (\ref{mYsymmIV}) follow from the analogous symmetries for $v^Y$, i.e., 
$v^Y(y,t,z) = \sigma_1 \overline{v^Y(y,t,\bar{z})}^{-1} \sigma_1$ and, if $p(t,z) = -p^*(t,-z)$, $v^Y(y,t,z) = \sigma_1 \sigma_3 v^Y(y,t, -z)^{-1} \sigma_3 \sigma_1$.
 
Finally, to prove (\ref{mYexplicitIV}), assume that $s = 0$, $p_1 \in \R$, and $p_2 \in i\R$. 
In this case, $m^P \equiv I$ and (\ref{wjYIV}) gives the following expressions for $w_1^Y = \hat{w}_1^Y$ and $w_2^Y=\hat{w}_2^Y$:
\begin{align}
& w_1^Y = \begin{pmatrix} 
 0	& -p_1ze^{-2i(y z + \frac{4z^3}{3})} 1_{Y_3 \cup Y_4}(z) \\
p_1z e^{2i(y z + \frac{4z^3}{3})} 1_{Y_1 \cup Y_2}(z) & 0 
\end{pmatrix},
	\\ \label{w2YIV}
& w_2^Y = \begin{pmatrix} 
 0	& p_2z^2e^{-2i(y z + \frac{4z^3}{3})} 1_{Y_3 \cup Y_4}(z)  \\
p_2z^2e^{2i(y z + \frac{4z^3}{3})} 1_{Y_1 \cup Y_2}(z) & 0
\end{pmatrix},
\end{align}
where $1_{A}(z)$ denotes the characteristic function of the set $A \subset \C$.
Now
\begin{align*}
\int_{Y_1 \cup Y_2}  e^{2i(yz + \frac{4z^3}{3})} dz 
= \int_{Y_3 \cup Y_4} e^{-2i(yz + \frac{4z^3}{3})} dz
= \pi \Ai(y)
\end{align*}
so that, differentiating $j$ times with respect to $y$,
\begin{align}\label{intAiryIV}
\int_{Y_1 \cup Y_2} z^j e^{2i(yz + \frac{4z^3}{3})} dz 
= (-1)^j \int_{Y_3 \cup Y_4} z^j e^{-2i(yz + \frac{4z^3}{3})} dz
= \frac{\pi \Ai^{(j)}(y)}{(2i)^j}
\end{align}
for each integer $j \geq 0$. The coefficients $m_{11}^Y = \hat{m}_{11}^Y$ and $m_{21}^Y = \hat{m}_{21}^Y$ can now be computed:
\begin{align*}
 m_{11}^Y(y) = & -\frac{1}{2\pi i} \int_{Y} w_1^Y dz
	\\
= & - \frac{p_1}{2\pi i} \begin{pmatrix} 0 & - \int_{Y_3 \cup Y_4}  z e^{-2i(yz + \frac{4z^3}{3})} dz  \\
 \int_{Y_1 \cup Y_2} z e^{2i(yz + \frac{4z^3}{3})} dz & 0 
 \end{pmatrix}
	\\
= &  -\frac{p_1}{2\pi i} \frac{\pi \Ai'(y)}{2i} 
\begin{pmatrix} 0 & 1   \\
1 & 0  \end{pmatrix},
	\\
m_{21}^Y(y) = & -\frac{1}{2\pi i}\int_{Y} z w_1^Y  dz
	\\
= & -\frac{p_1}{2\pi i} \begin{pmatrix} 0 & -\int_{Y_3 \cup Y_4}  z^2 e^{-2i(yz + \frac{4z^3}{3})} dz   \\
\int_{Y_1 \cup Y_2} z^2 e^{2i(yz + \frac{4z^3}{3})} dz & 0 \end{pmatrix}
	\\
= &\; \frac{p_1}{2\pi i} \frac{\pi \Ai''(y)}{4}  \begin{pmatrix} 0 & -1   \\
1 & 0  \end{pmatrix}.
\end{align*}

It only remains to derive the expression for $m_{12}^Y$. Using that
\begin{align*}
\mu_1^Y(y,z) & = \mathcal{C}^Y_{w_1^Y}I
= \mathcal{C}^Y_-(w_1^Y)
= \frac{1}{2\pi i}\int_Y \frac{w_1^Y(y,s)ds}{s-z_-}
	\\
& = \frac{p_1}{2\pi i} \begin{pmatrix}0 & -\int_{Y_3 \cup Y_4} \frac{se^{-2i(ys + \frac{4s^3}{3})} ds}{s-z_-} \\
\int_{Y_1 \cup Y_2} \frac{se^{2i(ys + \frac{4s^3}{3})} ds}{s-z_-} & 0 \end{pmatrix},
\end{align*}
we find that the matrix-valued function $F(y)$ defined by $F(y) = \int_{Y} \mu_1^Y w_1^Y dz$ satisfies
\begin{align*}
F(y) 
& = -\frac{p_1^2}{2\pi i} \begin{pmatrix} F_{1}(y) & 0 \\
0 & F_{2}(y)
\end{pmatrix},
\end{align*}
where the diagonal entries are given by 
\begin{align*}
& F_1(y) = \int_{Y_1 \cup Y_2} \bigg(  \int_{Y_3 \cup Y_4}  \frac{sze^{-2i(ys + \frac{4s^3}{3})}e^{2i(yz + \frac{4z^3}{3})}}{s-z} ds \bigg)dz,
	\\
& F_2(y) = \int_{Y_3 \cup Y_4} \bigg( \int_{Y_1 \cup Y_2} \frac{sze^{2i(ys + \frac{4s^3}{3})}e^{-2i(yz + \frac{4z^3}{3})}}{s-z} ds \bigg) dz.
\end{align*}
Fubini's theorem implies that $F_2(y) = -F_1(y)$ and so
\begin{align*}
F(y) = -\frac{p_1^2}{2\pi i} 
F_{1}(y)\sigma_3.
\end{align*}
Differentiation with respect to $y$ yields
\begin{align*}
F'(y) & = \frac{p_1^2}{\pi} 
\bigg(\int_{Y_1 \cup Y_2} ze^{2i(yz + \frac{4z^3}{3})} dz\bigg)\bigg( \int_{Y_3 \cup Y_4}  s e^{-2i(ys + \frac{4s^3}{3})} ds\bigg)\sigma_3
	\\
& = \frac{p_1^2}{\pi} 
\bigg(\frac{\pi \Ai'(y)}{2i}\bigg)\bigg(-\frac{\pi \Ai'(y)}{2i}\bigg)
\sigma_3,
\end{align*}
where we have used (\ref{intAiryIV}) with $j = 1$.
Using that $F(y) \to 0$ as $y\to \infty$, we arrive at
$$\int_{Y} \mu_1^Y(y,z) w_1^Y(y,z)dz = \int_{+\infty}^y F'(y') dy' = \frac{\pi p_1^2}{4} \int_{+\infty}^y \Ai'(y')^2dy'
\sigma_3.$$
Taking advantage of this formula, recalling (\ref{w2YIV}), and using (\ref{intAiryIV}) again, the coefficient $m_{12}^Y$ is easily computed:
\begin{align*}
 m_{12}^Y(y) = & -\frac{1}{2\pi i}\int_{Y} (w_2^Y(y,z) + \mu_1^Y(y,z) w_1^Y(y,z)) dz
	\\
 = &
 - \frac{p_2}{2\pi i} \begin{pmatrix} 0 & \int_{Y_3 \cup Y_4}  z^2 e^{-2i(yz + \frac{4z^3}{3})} dz  \\
 \int_{Y_1 \cup Y_2} z^2 e^{2i(yz + \frac{4z^3}{3})} dz & 0 \end{pmatrix}
	\\
& -\frac{1}{2\pi i}\frac{\pi p_1^2}{4} \int_{+\infty}^y (\Ai'(y'))^2dy' \sigma_3
	\\	
 = &\; \frac{p_2}{2\pi i} \frac{\pi}{4} \Ai''(y) \sigma_1
 -\frac{p_1^2}{8i} \int_{+\infty}^y (\Ai'(y'))^2dy' \sigma_3.
\end{align*}
This completes the proof of the lemma. \end{proof}

\section{Model problem for Sector $IV_\leq$}\label{IVgeqapp}
For each $z_0 \geq 0$, let $Z \equiv Z(z_0)$ denote the contour $Z = \cup_{j=1}^5 Z_j$ oriented as in Figure \ref{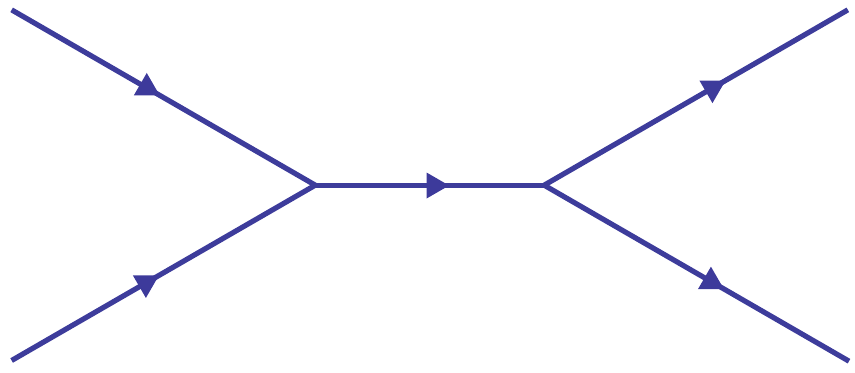}, where 
\begin{align} \nonumber
&Z_1 = \bigl\{z_0+ re^{\frac{i\pi}{6}}\, \big| \, 0 \leq r < \infty\bigr\}, && Z_2 = \bigl\{-z_0 + re^{\frac{5i\pi}{6}}\, \big| \, 0 \leq r < \infty\bigr\},  
	\\ \nonumber
&Z_3 = \bigl\{-z_0 + re^{-\frac{5i\pi}{6}}\, \big| \, 0 \leq r < \infty\bigr\}, && Z_4 = \bigl\{z_0 + re^{-\frac{i\pi}{6}}\, \big| \, 0 \leq r < \infty\bigr\}, 
	\\ \label{ZdefIVg}
& Z_5 = \bigl\{r\, \big|  -z_0 \leq r \leq z_0\bigr\}.
\end{align}
The long-time asymptotics in Sector $\IV_\leq$ is related to the solution $m^Z$ of the following family of RH problems parametrized by $y \leq 0$, $t \geq 0$, and $z_0 \geq 0$:
\begin{align}\label{RHmZIVg}
\begin{cases}
m^Z(y, t, z_0, \cdot) \in I + \dot{E}^2(\C \setminus Z),\\
m_+^Z(y, t, z_0, z) =  m_-^Z(y, t, z_0, z) v^Z(y, t, z_0, z) \quad \text{for a.e.} \ z \in Z,
\end{cases}
\end{align}
where the jump matrix $v^Z(y, t, z_0, z)$ is defined by
\begin{align}\label{vZdefIVg}
v^Z(y, t, z_0, z) = \begin{cases}
 \begin{pmatrix} 
 1	& 0 \\
p(t,z) e^{2i(y z + \frac{4z^3}{3})}  & 1 
\end{pmatrix}, &  z \in Z_1 \cup Z_2, 
	\\
\begin{pmatrix} 1 & -p^*(t, z)e^{-2i(y z + \frac{4z^3}{3})}	\\
0	& 1 
\end{pmatrix}, &   z \in Z_3 \cup Z_4, 
  	\\
\begin{pmatrix} 1 - |p(t, z)|^2 & -p^*(t, z)e^{-2i(y z + \frac{4z^3}{3})} \\
p(t,z) e^{2i(y z + \frac{4z^3}{3})}	& 1 \end{pmatrix}, &  z \in Z_5,
\end{cases}
\end{align}
with $p(t,z)$ given by (\ref{psumIV}). 
Define the parameter subset $\mathcal{P}_T$ of $\R^3$ by
\begin{align}\label{parametersetIVg}
\mathcal{P}_T = \{(y,t,z_0) \in \R^3 \, | \, -C_1 \leq y \leq 0, \, t \geq T, \, \sqrt{|y|}/2 \leq z_0 \leq C_2\},
\end{align}
where $C_1,C_2 > 0$ are constants. 

\begin{figure}
\begin{center}
 \begin{overpic}[width=.55\textwidth]{Z.pdf}
 \put(79,36){\small $Z_1$}
 \put(17,36){\small $Z_2$}
 \put(17.5,6){\small $Z_3$}
 \put(78,6){\small $Z_4$}
 \put(49,25){\small $Z_5$}
 \put(62,18){\small $z_0$}
 \put(34,18){\small $-z_0$}
 \end{overpic}
   \begin{figuretext}\label{Z.pdf}
      The contour $Z$. 
      \end{figuretext}
   \end{center}
\end{figure}

\begin{lemma}[Model problem in Sector $\IV_\leq$]\label{ZlemmaIVg}
Let $p(t,z)$ be of the form (\ref{psumIV}) with $s \in \{ir \, |  -1 < r < 1\}$ and $\{p_{j}\}_1^n \subset \C$.

\begin{enumerate}[$(a)$]
\item There is a $T \geq 1$ such that the RH problem (\ref{RHmZIVg}) with jump matrix $v^Z$ given by (\ref{vZdefIVg}) has a unique solution $m^Z(y, t, z_0, z)$ whenever $(y,t,z_0) \in \mathcal{P}_T$. 

\item For each integer $N \geq 1$, 
\begin{align}\label{mZasymptoticsIVg}
&  m^Z(y, t, z_0, z) = I + \sum_{j=1}^N \sum_{l=0}^N \frac{m_{jl}^Y(y)}{z^j t^{l/3}}  + O\biggl(\frac{t^{-(N+1)/3}}{|z|} + \frac{1}{|z|^{N+1}}\biggr)
\end{align}
uniformly with respect to $\arg z \in [0, 2\pi]$ and $(y,t,z_0) \in \mathcal{P}_T$ as $z \to \infty$, where $\{m_{jl}^Y(y)\}$ are smooth functions of $y \in \R$ which coincide with the functions in (\ref{mYasymptoticsIV}) for $y \geq 0$. 

\item $m^Z$ obeys the bound
\begin{align}\label{mZboundedIVg}
\sup_{(y,t,z_0) \in \mathcal{P}_T} \sup_{z \in \C\setminus Z} |m^Z(y, t, z_0, z)|   < \infty.
\end{align}

\item $m^Z$ obeys the symmetry
\begin{subequations}\label{mZsymmIV}
\begin{align}
m^Z(y, t, z_0, z) = \sigma_1\overline{m^Z(y, t, z_0, \bar{z})} \sigma_1.
\end{align}
If $p(t,z) = -\overline{p(t,-\bar{z})}$, then it also obeys the symmetry
\begin{align}
m^Z(y, t, z_0, z) = \sigma_1\sigma_3m^Z(y, t, z_0, -z)\sigma_3 \sigma_1.
\end{align}
\end{subequations}

\item The leading coefficient $m_{10}^Y(y)$ in (\ref{mZasymptoticsIVg}) is given by (\ref{m10Yexplicit}); if $s = 0$, $p_1 \in \R$, and $p_2 \in i\R$, then (\ref{mYexplicitIV}) holds.

\end{enumerate}
\end{lemma}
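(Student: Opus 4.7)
The proof parallels that of Lemma \ref{YlemmaIV}, with the only genuinely new ingredient being the construction of a leading-order reference model on the contour $Z$, which differs from $Y$ by having its four rays emanate from the saddle points $\pm z_0$ (rather than the origin) and by containing an extra compact segment $Z_5 = [-z_0, z_0]$. The first step, which is the main technical obstacle, is to construct a reference solution $m^{Z,0}(y,t,z_0,z)$ on $\C \setminus Z$ solving the RH problem (\ref{RHmZIVg}) with $p(t,z)$ replaced by its constant term $s$. Starting from the Painlev\'e II solution $m^P(y,z) = m^P(y,z; s, 0, -s)$ of Proposition \ref{PainlevepropIV}, which is analytic on $\C \setminus (P_1 \cup P_3 \cup P_4 \cup P_6)$ since the middle Stokes parameter vanishes, I pair each ray $P_j$ (from the origin) with the parallel ray of $Z$ (from $\pm z_0$) and, in each of the four wedge-shaped regions between such a pair, multiply $m^P$ by the triangular matrix $e^{-i\phi\sigma_3} S_j e^{i\phi\sigma_3}$ (with $\phi = yz + \frac{4z^3}{3}$ continued analytically in $z$). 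This cancels the Painlev\'e jump on $P_j$, installs the required leading-order jump on the paired ray of $Z$, and on each half of $Z_5$ leaves an induced jump equal to the matrix product of the two triangular factors from the wedges above and below. A direct matrix computation---using $s \in i\R$, so that $-\bar s = s$ and $s^2 = -|s|^2$---shows this product equals precisely $v^Z|_{Z_5}$ with $p = s$. The uniform bound (\ref{mPbounded}) on $m^P$ transfers to a uniform bound on $m^{Z,0}$ over $\mathcal{P}_T \times (\C \setminus Z)$.

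Second, I set $\hat m^Z := m^Z (m^{Z,0})^{-1}$, which satisfies an RH problem on $Z$ with $\hat w^Z := \hat v^Z - I = m^{Z,0}_-(v^Z - v^{Z,0})(m^{Z,0}_+)^{-1}$. Because $v^Z - v^{Z,0}$ involves only $p(t,z) - s = \sum_{j=1}^n p_j z^j t^{-j/3}$, the phase estimates from (\ref{eonYIV}) adapt at once: on each off-diagonal ray $Z_1, \ldots, Z_4$ the exponential factor $e^{\pm 2i\phi(z)}$ decays cubically in the distance $|z - (\pm z_0)|$ from the saddle point at which the ray emanates (by Taylor expansion of $\phi$, whose critical points are exactly $\pm z_0$), while on the compact segment $Z_5$ the bound $|z| \leq z_0 \leq C_2$ applies directly. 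Combining these pieces gives $\|\hat w^Z\|_{(L^1 \cap L^\infty)(Z)} \leq Ct^{-1/3}$ uniformly on $\mathcal{P}_T$, along with an expansion $\hat w^Z = \sum_{j=1}^n t^{-j/3} \hat w^Z_j$ whose coefficients satisfy analogous bounds.

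From here the argument from (\ref{muYdefIV}) onwards applies verbatim with $Y$ replaced by $Z$: for $T$ large enough, $I - \hat{\mathcal{C}}^Z_{\hat w^Z}$ is invertible on $L^2(Z)$, the solution takes the form $\hat m^Z = I + \hat{\mathcal{C}}^Z(\hat\mu^Z \hat w^Z)$ with $\hat\mu^Z$ as in (\ref{muYdefIV}), and expanding $\hat\mu^Z$ as in (\ref{muYexpansionIV}) and substituting into the integral representation yields the expansion (\ref{mZasymptoticsIVg}) as $z \to \infty$ in any direction (the nontangential restriction is removed by contour deformation, exactly as after (\ref{mYexpansionIV})). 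Since $u_P(y; s, 0, -s)$ is smooth on all of $\R$ for $s \in i\R$, $|s| < 1$, the coefficients $m_{jl}^Y(y)$ extend smoothly to $y \leq 0$ and by construction match those of (\ref{mYasymptoticsIV}) on $y \geq 0$. The uniform bound (\ref{mZboundedIVg}) and the symmetries (\ref{mZsymmIV}) follow from this expansion together with uniqueness applied to the symmetries of $v^Z$. When $s = 0$, $p_1 \in \R$, $p_2 \in i\R$, one has $m^P \equiv I$, the four triangular factors become trivial, and $m^{Z,0} \equiv I$; the explicit formulas (\ref{mYexplicitIV}) then follow from the Airy-integral computations of Lemma \ref{YlemmaIV} via (\ref{intAiryIV}).
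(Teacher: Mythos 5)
Your proposal follows essentially the same route as the paper: you construct the leading-order parametrix by deforming the Painlev\'e II solution $m^P$ with triangular factors in the four wedges between the origin-centered rays and the $\pm z_0$-centered rays (the paper's $m^{P1}$), check that the induced jump on $Z_5$ is the correct one, and then run the same small-norm argument as in Lemma \ref{YlemmaIV}, using cubic phase decay along $Z_1,\dots,Z_4$ measured from the saddle points and boundedness on the compact segment $Z_5$. The only cosmetic discrepancy is that on $Z_5$ the expansion of $\hat w^Z$ runs to order $t^{-2n/3}$ because of the $|p|^2$ entry (as in the paper's (\ref{wZexpansionIVZ5})), which does not affect the argument.
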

\begin{proof}
As in the proof of Lemma \ref{YlemmaIV}, we let $u_P(y; s, 0, -s)$ denote the smooth real-valued solution of (\ref{painleve2}) corresponding to $(s,0,-s)$ and we let $m^P(y,z) \equiv m^P(y,z;s,0,-s)$ be the corresponding solution of the Painlev\'e II RH problem (\ref{RHmP}). 
Let $\{V_j\}_1^4$ denote the open subsets of $\C$ displayed in Figure \ref{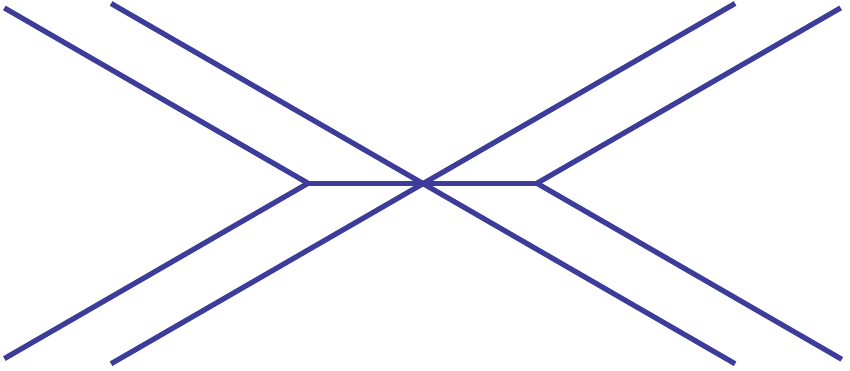}.
Defining $m^{P1}(y,z_0,z)$ by 
$$m^{P1}(y,z) = m^{P}(y,z) \times \begin{cases} 
\begin{pmatrix} 1 & 0 \\ s e^{2i(yz + \frac{4 z^3}{3})} & 1 \end{pmatrix}, & z \in V_1 \cup V_2, 
	\\
\begin{pmatrix} 1 & \bar{s} e^{-2i(yz + \frac{4 z^3}{3})} \\ 0 & 1 \end{pmatrix}, & z \in V_3 \cup V_4, 
\end{cases}$$
we see that $m^{P1}$ satisfies the RH problem
\begin{align*}
\begin{cases}
m^{P1}(y, z_0, \cdot) \in I + \dot{E}^2(\C \setminus Z),\\
m_+^{P1}(y, z_0, z) =  m_-^{P1}(y, z_0, z) v^{P1}(y, z_0, z) \quad \text{for a.e.} \ z \in Z,
\end{cases}
\end{align*}
where the jump matrix $v^{P1}(y, z_0, z)$ is defined by
\begin{align*}
v^{P1}(y, z_0, z) = \begin{cases}
 \begin{pmatrix} 
 1	& 0 \\
s e^{2i(y z + \frac{4z^3}{3})}  & 1 
\end{pmatrix}, &  z \in Z_1 \cup Z_2, 
	\\
\begin{pmatrix} 1 & -\bar{s} e^{-2i(y z + \frac{4z^3}{3})}	\\
0	& 1 
\end{pmatrix}, &   z \in Z_3 \cup Z_4, 
  	\\
\begin{pmatrix} 1 - |s|^2 & -\bar{s} e^{-2i(y z + \frac{4z^3}{3})} \\
s e^{2i(y z + \frac{4z^3}{3})}	& 1 \end{pmatrix}, &  z \in Z_5.
\end{cases}
\end{align*}
In other words, $m^{P1}(y,z_0, z)$ solves the RH problem obtained from (\ref{RHmZIVg}) by replacing the polynomial $p(t,z)$ on the right-hand side of (\ref{vZdefIVg}) with its leading term $s$. The bound (\ref{mPbounded}) implies that, for any choice of $C_1, C_2 > 0$,
\begin{align}\label{mP1bounded}
\sup_{-C_1 \leq y \leq 0} \sup_{0 \leq z_0 \leq C_2} \sup_{z \in \C\setminus Z} |m^{P1}(y,z_0, z)|  < \infty.
\end{align}
\begin{figure}
\begin{center}
 \begin{overpic}[width=.55\textwidth]{Zfourrays.pdf}
 \put(77,34){\small $V_1$}
 \put(18,34){\small $V_2$}
 \put(18.5,8){\small $V_3$}
 \put(77,8){\small $V_4$}
 \put(62,18){\small $z_0$}
 \put(49,17.5){\small $0$}
 \put(33.5,18){\small $-z_0$}
 \end{overpic}
   \begin{figuretext}\label{Zfourrays.pdf}
      The regions $V_{1},...,V_{4}$
      \end{figuretext}
   \end{center}
\end{figure}
The function $m^Z$ satisfies (\ref{RHmZIVg}) iff $\hat{m}^Z := m^Z (m^{P1})^{-1}$ satisfies
\begin{align}\label{RHmZhatIVg}
\begin{cases}
\hat{m}^Z(y, t, z_0, \cdot) \in I + \dot{E}^2(\C \setminus Z),\\
\hat{m}_+^Z(y, t, z_0, z) =  \hat{m}_-^Z(y, t, z_0, z) \hat{v}^Z(y, t, z_0, z) \quad \text{for a.e.} \ z \in Z,
\end{cases}
\end{align}
where $\hat{w}^Z := \hat{v}^Z -I = m_{-}^{P1} (v^Z - v^{P1})(m_+^{P1})^{-1}$. 
For $z \in \cup_{k=1}^4 Z_k$ we have
\begin{align}\label{wZexpansionIV}
\hat{w}^Z(y, t, z_0, z) = \frac{\hat{w}_1^Z(y,z_0, z)}{t^{1/3}} + \cdots + \frac{\hat{w}_{n}^Z(y,z_0, z)}{t^{n/3}},
\end{align}
where 
\begin{align}\label{wjZIV}
\hat{w}_j^Z(y, z_0, z) = \begin{cases}
m_-^{P1} \begin{pmatrix} 
 0	& 0 \\
p_{j}z^je^{2i(y z + \frac{4z^3}{3})}  & 0 
\end{pmatrix}(m_+^{P1})^{-1}, &  z \in Z_1 \cup Z_2,
	\\
m_-^{P1}\begin{pmatrix} 0 & - \bar{p}_{j}z^je^{-2i(y z + \frac{4z^3}{3})} 	\\
0	& 0 
\end{pmatrix}(m_+^{P1})^{-1}, &   z \in Z_3 \cup Z_4.
\end{cases}
\end{align}
On the other hand, for $z \in Z_5$ we have
\begin{align}\label{wZexpansionIVZ5}
\hat{w}^Z(y, t, z_0, z) = \frac{\hat{w}_1^Z(y,z_0, z)}{t^{1/3}} + \cdots + \frac{\hat{w}_{2n}^Z(y,z_0, z)}{t^{2n/3}},
\end{align}
where (since $|e^{\pm 2i(y z + 4z^3/3)}| = 1$ for $z \in Z_5$)
\begin{align}\label{hatwjonZ5IVg}
|\hat{w}_j^Z(y, z_0, z)| \leq C, \qquad j = 1, \dots, 2n,
\end{align}
uniformly for $z \in Z_5$, $-C_1 \leq y \leq 0$, and $0 \leq z_0 \leq C_2$.

For $z = z_0 + re^{\frac{\pi i}{6}} \in Z_1$ with $r \geq 0$,  $z_0 \geq 0$, and $-4z_0^2 \leq y \leq 0$, we have
\begin{align*}
\re\bigg(2i\bigg(yz + \frac{4z^3}{3}\bigg)\bigg)
= -\frac{8 r^3}{3}-4 \sqrt{3} r^2 z_0-r \left(y+4 z_0^2\right)
\leq  -\frac{8 r^3}{3}-4 \sqrt{3} r^2 z_0.
\end{align*}
Hence the following estimate holds uniformly for $(y,t,z_0) \in \mathcal{P}_1$, where $\mathcal{P}_1$ denotes the parameter subset defined in (\ref{parametersetIVg}) with $T = 1$:
\begin{subequations}\label{eonZIVg}
\begin{align}
|e^{2i(y z + \frac{4z^3}{3})}| \leq Ce^{-|z-z_0|^2(z_0 + |z-z_0|)}, \qquad  z \in Z_1.
\end{align}
Analogous estimates hold also for $z \in Z_j$, $j = 2,3,4$:
\begin{align}
& |e^{2i(y z + \frac{4z^3}{3})}| \leq Ce^{-|z+z_0|^2(z_0 + |z+z_0|)}, \qquad  z \in Z_2,
	\\
& |e^{-2i(y z + \frac{4z^3}{3})}| \leq Ce^{-|z+z_0|^2(z_0 + |z+z_0|)}, \qquad  z \in Z_3,
	\\
& |e^{-2i(y z + \frac{4z^3}{3})}| \leq Ce^{-|z-z_0|^2(z_0 + |z-z_0|)}, \qquad  z \in Z_4.
\end{align}
\end{subequations}
Using (\ref{mP1bounded}) and (\ref{eonZIVg}), equation (\ref{wjZIV}) implies that, for any integer $m \geq 0$,
\begin{align*}
|z^m\hat{w}_j^Z(y,z_0, z)| \leq &\; C |z|^{m+j} e^{-|z-z_0|^3} \leq Ce^{-c|z-z_0|^3}, \qquad
 z \in Z_1, \ j = 1, \dots, n,
\end{align*}
uniformly for $(y,t,z_0) \in \mathcal{P}_1$. Similar estimates hold also on $Z_2 \cup Z_3 \cup Z_4$. 
Recalling also (\ref{hatwjonZ5IVg}), we conclude that, for any integer $m \geq 0$ and any $1 \leq p \leq \infty$,
\begin{align}\label{wZLpestIVg}
\begin{cases}
\|z^m\hat{w}_j^Z(y,z_0,z)\|_{L^p(Z)} \leq C, & j = 1, \dots, 2n,
	\\
\|z^m\hat{w}^Z(y,t,z_0,z)\|_{L^p(Z)} \leq Ct^{-1/3},
\end{cases} 
\end{align}
uniformly for $(y,t,z_0) \in \mathcal{P}_1$.
The estimates (\ref{wZLpestIVg}) play the same role in the present proof as the estimates (\ref{wYLpestIV}) did in the proof of Lemma \ref{YlemmaIV} and the rest of the proof is now analogous to the proof of that lemma.
\end{proof}

\noindent
{\bf Acknowledgement}  {\it The authors are grateful to the anonymous referee for valuable remarks. Support is acknowledged from the G\"oran Gustafsson Foundation, the European Research Council, Grant Agreement No. 682537, and the Swedish Research Council, Grant No. 2015-05430.}

\bibliographystyle{plain}
\bibliography{is}

\end{document}